\DeclareTextCommand{\textnu}{PU}{\83\275}%* U+03BD
\newcommand{\ligne}{\vspace{1\baselineskip}}
\newcommand{\ph}{\phantomsection}
\newcommand{\cal}{\mathcal}
\newcommand{\R}{\mathbb  R}
\newcommand{\C}{\mathbb  C}
\newcommand{\N}{\mathbb  N}
\newcommand{\p}{\bf  p}
\newcommand{\W}{  \mathcal{W}   }
\newcommand{\eps}{\varepsilon}
\renewcommand{\epsilon}{\varepsilon}
\newcommand{\e}{  \text{e}   }
\newcommand{\wt}{  \widetilde   }
\newcommand{\Z}{  \mathbb{Z}   }
\renewcommand{\H}{  \mathcal{H}   }
\newcommand{\T}{  \mathcal{T} }
\newcommand{\dis}{\displaystyle}
\newcommand{\om}{  \omega   }
\newcommand{\ov}{  \overline  }
\renewcommand{\a}{  \alpha   }
\renewcommand{\b}{  \beta   }
\newcommand{\s}{  \sigma   }
\renewcommand{\phi}{  \varphi  }
\newcommand{\<}{  \langle   }
\renewcommand{\>}{  \rangle   }
\numberwithin{equation}{section}
\theoremstyle{plain}
\newtheorem{theo}{Theorem}[section]
\newtheorem{lemm}[theo]{Lemma} 
\newtheorem{prop}[theo]{Proposition} 
\newtheorem{rema}[theo]{Remark}
\def\beq{\begin{equation}}   \def\eeq{\end{equation}}
\def\bea{\begin{eqnarray}}  \def\eea{\end{eqnarray}}
\renewcommand{\theequation}{\thesection.\arabic{equation}}
\newcounter{hran} \renewcommand{\thehran}{\thesection.\arabic{hran}}
\def\bmini{\setcounter{hran}{\value{equation}}
    \refstepcounter{hran}\setcounter{equation}{0}
    \renewcommand{\theequation}{\thehran\alph{equation}}\begin{eqnarray}}
\def\bminiG#1{\setcounter{hran}{\value{equation}}
\refstepcounter{hran}\setcounter{equation}{-1}
\renewcommand{\theequation}{\thehran\alph{equation}}
\refstepcounter{equation}\label{#1}\begin{eqnarray}}
\author{Pierre Germain}
\address{Courant Institute of Mathematical Sciences, 251 Mercer Street, New York 10012-1185 NY, USA}
\email{pgermain@cims.nyu.edu}
\author{Zaher Hani}
\address{School of Mathematics, Georgia Institute of Technology, Atlanta, GA 30332, USA}
\email{hani@math.gatech.edu}
\author{ Laurent Thomann }
\address{Laboratoire de Math\'ematiques J. Leray, UMR  6629 du CNRS, Universit\'e de Nantes, 
2, rue de la Houssini\`ere,
44322 Nantes Cedex 03, France}
\email{laurent.thomann@univ-nantes.fr}
\title[ Statistical study of the CR equation]{On the continuous resonant equation for NLS\\
II. Statistical study}
\begin{document}

\begin{abstract}
We consider the continuous resonant~(CR) system of the 2D cubic nonlinear Schr\"odinger (NLS) equation. This system arises in numerous instances as an effective equation for the long-time dynamics of NLS in confined regimes (e.g. on a compact domain or with a trapping potential). The system was derived and studied from a deterministic viewpoint in several earlier works \cite{FGH, HT, GHT1}, which uncovered many of its striking properties. This manuscript is devoted to a probabilistic study of this system. Most notably, we construct global solutions in negative Sobolev spaces, which leave Gibbs and white noise measures invariant. Invariance of white noise measure seems particularly interesting in view of the absence of similar results for NLS.
\end{abstract}

\subjclass[2000]{35BXX ;  37K05 ; 37L50 ; 35Q55}
\keywords{ Nonlinear Schr\"odinger equation, random data, Gibbs measure, white noise measure, weak solutions, global solutions}
\thanks{P. G. is partially supported by NSF grant DMS-1101269, a start-up grant from the Courant Institute, and a Sloan fellowship.}
\thanks{L.T. is partially supported   by the  grant  ``ANA\'E'' ANR-13-BS01-0010-03}
\thanks{Z.~H. is partially supported by NSF Grant DMS-1301647, and a start-up fund
from the Georgia Institute of Technology.}
\maketitle

\section{Introduction}\label{intro}

\subsection{Presentation of the equation}

The purpose of this manuscript is to construct some invariant measures for the so-called continuous resonant~\eqref{CR} system of the cubic nonlinear Schr\"odinger equation. This system can be written as %has several equivalent formulations corresponding to equivalent expression of the nonlinearity operator $\mathcal T$: %Originally, it was derived as the large-box limit of the resonant cubic NLS equation\footnote{Starting with the equation on the torus of size $L$ and taking the formal limit $L\to \infty$.}, and the equation reads in this case

\begin{equation}\label{CR}\tag{CR}
\left\{
\begin{aligned}
&i\partial_{t}u=\T(u,u,u), \quad   (t,x)\in \R\times \R^2,\\
&u(0,x)=  f(x),
\end{aligned}
\right.
\end{equation}
where  the operator $\mathcal T$ defining the nonlinearity has several equivalent formulations corresponding to different interpretations/origins of this system. In its original formulation \cite{FGH} as the large-box limit\footnote{Starting with the equation on a torus of size $L$ and letting $L\to \infty$.} of the resonant cubic NLS\footnote{This is NLS with only the resonant interactions retained (a.k.a. first Birkhoff normal form). It gives an approximation of NLS for sufficiently small initial data.}, $\mathcal T$ can be written as follows: For $z\in \R^2$ and $(x_1, x_2)\in \R^2$, denoting by $x^\perp=(-x_2, x_2)$, we have% is defined by 
\begin{align*} 
\T(f_1,f_2,f_3)(z) & \overset{def}{=}  \int_\mathbb{R} \int_{\R^2}  f_1(x+z)f_2(\lambda x^{\perp}+z)\ov{f_3(x+\lambda x^{\perp}+z)} \,dx\, d\lambda.
\end{align*}
This integral can be understood as an integral over all rectangles having $z$ as a vertex. It has the following equivalent formulation \cite{GHT1}:
\begin{align*} 
\T(f_1,f_2,f_3) =2\pi \int_{\R}   \e^{-i\tau \Delta}  \Big[ (\e^{i\tau \Delta}f_1) ({ \e^{i\tau \Delta}f_2}) (\ov{\e^{i\tau \Delta}f_3})\Big] d\tau.
\end{align*}
It was shown in \cite{FGH} that the dynamics of~\eqref{CR} approximate that of the cubic NLS equation on a torus of size $L$ (with $L$ large enough) over time scales  $\sim L^2/\epsilon^2$ (up to logarithmic loss in $L$), where $\epsilon$ denotes the size of the initial data.

Another formulation of~\eqref{CR} comes from the fact that it is also the resonant system for the cubic nonlinear Schr\"odinger equation with harmonic potential given by:
\begin{equation}
\label{nls}
i \partial_t u - \Delta u + |x|^2 u = \mu |u|^2 u, \qquad  \mu=cst \in \R.
\end{equation} 
In this picture, $\T$ can be written as follows: Denoting by $H:=-\Delta +\vert x\vert^2=-\partial^2_{x_1}-\partial^2_{x_2}+x^2_1+x^2_2$ the harmonic oscillator on $ \R^2$, then 
\begin{equation*}
\T(f_1, f_2, f_3)=  2\pi  \int_{-\frac{\pi}4}^{\frac{\pi}4}   \e^{i\tau H}  \Big[ (\e^{-i\tau H}f_1) ({ \e^{-i\tau H}f_2}) (\ov{\e^{-i\tau H}f_3})\Big] d\tau.
\end{equation*}
As a result, the dynamics of~\eqref{CR} approximate the dynamics of \eqref{nls} over long nonlinear time scales for small enough initial data.

The equation~\eqref{CR} is Hamiltonian: Indeed, introducing  the functional
\begin{align*}
\mathcal{E}(u_1,u_2,u_3,u_4) & \overset{def}{=} \langle \mathcal{T}(u_1,u_2,u_3)\,,\,u_4 \rangle_{L^2}\\
& =  2\pi  \int_{-\frac{\pi}4}^{\frac{\pi}4}\int_{\R^2}  (\e^{-it H}u_1)   (\e^{-it H}u_2) (\ov{ \e^{-it H}u_3}) (\ov{\e^{-it H}u_4}) dx\, dt,\nonumber
\end{align*}
and setting 
$$\mathcal{E}(u) := \mathcal{E}(u,u,u,u),$$
then~\eqref{CR}  derives from the Hamiltonian $\mathcal E$ given the symplectic form  $\omega(f,g) = -4 \mathfrak{Im} \langle f\,,\, g \rangle_{L^2(\mathbb{R}^2)}$ on~$L^2(\mathbb{R}^2)$, so that~\eqref{CR} is equivalent to 
\begin{equation*}
i\partial_t f = \frac{1}{2} \frac{\partial \mathcal{E}(f)}{\partial \bar f}.
\end{equation*}

In addition to the two instances mentioned above in which~\eqref{CR} appears to describe the long-time dynamics of the cubic NLS equation -- with or without potential -- we mention the following: 

\begin{itemize}
\item[$\bullet$] The equation~\eqref{CR} appears as a modified scattering limit of the cubic NLS on $\R^3$ with harmonic tapping in two directions. Here,~\eqref{CR} appears as an asymptotic system and any information on the asymptotic dynamics of~\eqref{CR} directly gives the corresponding behavior for NLS with partial harmonic trapping. We refer to Hani-Thomann \cite{HT} for more details.
\item When restricted to the Bargmann-Fock space (see below), the equation~\eqref{CR} turns out to be the Lowest-Landau-Level equation, which describes fast rotating Bose-Einstein condensates (see~\cite{ABN,Nier,GGT}).
\item[$\bullet$] The equation~\eqref{CR} can also be interpreted as describing the effective dynamics of high frequency envelopes for NLS on the unit torus $\mathbb T^2$. This means that if the initial data $\phi(0)$ for NLS has its Fourier transform given by\footnote{Up to a normalizing factor in $H^s$, $s>1$.} $\{\widehat \varphi(0, k)\sim g_0(\frac{k}{N})\}_{k \in \Z^2}$, and if $g(t)$ evolves according to~\eqref{CR} with initial data $g_0$ and $\varphi(t)$ evolves according to NLS with initial data $\phi(0)$, then $g(t, \frac{k}{N})$ approximates the dynamics of $\widehat{\varphi}(t, k)$ in the limit of large $N$ (see \cite[Theorem 2.6]{FGH}). 
\end{itemize}
 
\subsection{Some properties and invariant spaces} We review some of the properties of the~\eqref{CR} equation that will be useful in this paper. For a more detailed study of the equation we refer to~\cite{FGH, GHT1}.

First,~\eqref{CR} is globally well-posed in $L^2(\R^2)$. Amongst its conserved quantities, we note
$$
\int_{\mathbb{R}^2} |u|^2 \,dx, \quad \int_{\mathbb{R}^2} (|x|^2 |u|^2 + |\nabla u|^2)\,dx = \int_{\mathbb{R}^2}  \overline{u} Hu\,dx,
$$
(recall that $H$ denotes the harmonic oscillator $H = -\Delta + |x|^2$). This equation also enjoys many invariant spaces, in particular:
\begin{itemize}
\item[$\bullet$] The eigenspaces $(E_N)_{N\geq 0}$ of the harmonic oscillator  are stable (see \cite{FGH, GHT1}). This is a manifestation of the  fact that~\eqref{CR} is the resonant equation associated to \eqref{nls}. Recall that $H$ admits a complete basis of eigenvectors for $L^2(\R^2)$; each eigenspace $E_N$ $(N=0, 1,2,...)$ has dimension $N+1$. 

\item[$\bullet$] The set of radial functions is stable, as follows from the invariance of $H$ under rotations (see\;\cite{GHT1}). Global dynamics on $L^2_{rad}(\mathbb{R}^2)$, the radial functions of $L^2(\mathbb{R}^2)$, can be defined. A basis of normalized eigenfunctions of $H$ for $L^2_{rad}(\mathbb{R}^2)$ is given by,
$$
\mbox{if $n \in \mathbb{N}$}, \quad \phi_n^{rad}(x) = \frac{1}{\sqrt{\pi}} L_n^{(0)}(|x|^2) e^{-\frac{|x|^2}{2}} \quad \mbox{with} \quad L_k^{(0)}(x) = e^x \frac{1}{n!}\left( \frac{d}{dx} \right)^k(e^{-x}x^k).
$$
We record that $H \phi_n^{rad} = (4n+2) \phi_k^{rad}$.
\item[$\bullet$] If $\mathcal{O}(\C) $ stands for the set of entire functions on $\C$ (with the identification $z=x_1+ix_2$), the Bargmann-Fock space $L^2_{hol}(\mathbb{R}^2) = L^{2}(\R^2)\cap (\mathcal{O}(\C) \e^{-\vert z \vert^2/2})$ is invariant by the flow of~\eqref{CR}. Global dynamics on $L^2_{hol}(\mathbb{R}^2)$ can be defined.
A basis of normalized eigenfunctions of~$H$ for\;$L^2_{hol}(\mathbb{R}^2)$ is given by the ``holomorphic" Hermite functions, also known as the ``special Hermite functions", namely
$$
\mbox{if $n \in \mathbb{N}$}, \quad \phi_n^{hol}(x) = \frac{1}{\sqrt{\pi n!}}(x_1+i x_2)^n \e^{-\vert x\vert^2/2 }.
$$
Notice that $H \phi^{hol}_n=2(n+1)\phi^{hol}_n$. It is proved in~\cite{GHT1} that 
\begin{equation} \label{tfi}
\T(\phi^{hol}_{n_1},\phi^{hol}_{n_2},\phi^{hol}_{n_3}) = \alpha_{n_1,n_2,n_3,n_4} \phi^{hol}_{n_4}, \qquad n_4=n_1+n_2-n_3,
\end{equation} 
with
\begin{equation*} 
\alpha_{n_1,n_2,n_3,n_4} = \H(\phi^{hol}_{n_1},\phi^{hol}_{n_2},\phi^{hol}_{n_3},\phi^{hol}_{n_4}) =\frac{\pi }{8} \frac{(n_1+n_2)!}{2^{n_1+n_2}\sqrt{n_1 !n_2 !n_3 !n_4 !}}{\bf 1}_{n_1+n_2=n_3+n_4}.
\end{equation*} 

As a result, the~\eqref{CR} system reduces to the following infinite-dimensional system of ODE when restricted to Span$\{\phi_n\}_{n \in \N}$:
$$
i\partial_t c_n(t)=\sum_{\substack{n_1, n_2, n_3 \in \N\\ n_1+n_2-n_3=n}} \alpha_{n_1,n_2,n_3,n} c_{n_1}(t)c_{n_2}(t)\overline{c_{n_3}}(t).
$$
\end{itemize}

\subsection{Statistical solutions}
In this paper we construct global probabilistic solutions on each of the above-mentioned spaces which leave invariant either Gibbs or white noise measures. More precisely, our main results can be summarized as follows:
 
\begin{itemize}
\item[$\bullet$]  We construct global strong  flows  on
  $$X_{rad}^0(\R^2) =\cap_{\s>0}  \H^{-\s}_{rad}(\R^2),$$
    and on 
  \begin{equation*}
X_{hol}^{0}(\R^2) := \big( \cap_{\sigma>0} \H^{-\sigma} (\R^2)\big)\cap (\mathcal{O}(\C)\e^{-\vert z \vert^2/2}),
\end{equation*}
which leave the Gibbs measures invariant (see Theorem \ref{thm1}).
\medskip 
 \item[$\bullet$]  
 We construct  global weak probabilistic solutions on 
  \begin{equation*}
X_{hol}^{-1}(\R^2) := \big( \cap_{\sigma>1} \H^{-\sigma} (\R^2)\big)\cap (\mathcal{O}(\C)\e^{-\vert z \vert^2/2}),
\end{equation*}
and this dynamics  leaves the white noise measure invariant (see Theorem \ref{thm2}).
 \end{itemize}

Since the 90's,  there have been many works devoted to  the construction of Gibbs measures  for dispersive equations, and more recently, much attention has been paid to the well-posedness of these equations  with random initial conditions. We refer to the introduction of \cite{PRT2} for references on the subject. In particular, concerning the construction of strong solutions for the nonlinear harmonic oscillator (which is related to~\eqref{CR}), we refer to \cite{tho,BTT,Deng,poiret1,poiret2,PRT2}.

\medskip 

Construction of flows invariant by white noise measure is much trickier due to the low regularity of the support of such measures, and there seems to be no results of this sort for NLS equations. We construct weak solutions on the support of the white noise measure on $X_{hol}^{-1}(\R^2)$ using a method based on a compactness argument in the space of measures (the Prokhorov theorem) combined with a representation theorem of random variables (the Skorohod theorem). This approach has been first applied to the Navier-Stokes and Euler equations in Albeverio-Cruzeiro \cite{AC} and Da Prato-Debussche~\cite{DPD} and extended to dispersive equations by Burq-Thomann-Tzvetkov \cite{BTT2}. We refer to \cite{BTT2} for a self-contained presentation of the method.

\subsection{Notations} Define the harmonic Sobolev spaces for $s \in \mathbb{R}$, $p\geq 1$ by 
\begin{equation*} 
\W^{s, p}= \W^{s, p}(\R^2) = \big\{ u\in L^p(\R^2),\; {H}^{s/2}u\in L^p(\R^2)\big\},    \qquad  \H^{s}= \W^{s, 2}.
\end{equation*}
They are endowed with the natural norms $\Vert u\Vert_{\W^{s,p}}$. Up to equivalence of norms we have for $s \geq 0$, $1<p<+\infty$ (see \cite[Lemma~2.4]{YajimaZhang2}) 
\begin{equation}\label{equiv}
\Vert u\Vert_{\W^{s,p}} = \Vert  H^{s/2}u\Vert_{L^{p}} \equiv \Vert (-\Delta)^{s/2} u\Vert_{L^{p}} + \Vert\<x\>^{s}u\Vert_{L^{p}}.
\end{equation}

Consider a   probability space $(\Omega, {\cal F}, {\bf p})$. In all the paper,  $\{g_{n},\;n\geq0\}$  and   $\{g_{n,k},\;n\geq0,\, 0\leq k\leq n\}$ are independent standard complex Gaussians $\mathcal{N}_{\C}(0,1)$ (their probability density function reads thus $\frac{1}{\pi} e^{-|z|^2} dz$, $dz$ being Lebesgue measure on $\mathbb{C}$). If $X$ is a random variable, we denote by $\mathscr{L}(X)$ its law (or distribution).

We will sometimes use the notations $L^{p}_{T}=L^{p}(-T,T)$ for $T>0$.  If $E$ is a Banach space and  $\mu$ is a measure on $E$, we write  $L^{p}_{\mu}=L^{p}(\text{d}\mu)$ and $\|u\|_{L^{p}_{\mu}E}=\big\|\|u\|_{E}\big\|_{L^{p}_{\mu}}$. We define $X^{\s}(\R^2)=\bigcap_{\tau<\s} \H^{\tau}(\R^2)$, and if $I\subset \R$ is an interval,  with an abuse of notation, we write $\mathcal{C}\big(I; X^{\s}(\R^2)\big)=\bigcap_{\tau<\s}\mathcal{C}\big(I; \H^{\tau}(\R^2)\big)$. 

Finally, $\N$ denotes the set of natural integers including 0; $c,C>0$ denote constants the value of which may change from line to line. These constants will always be universal, or uniformly bounded with respect to the other parameters. For two quantities $A$ and $B$, we denote $A \lesssim B$ if $A \leq CB$, and $A \approx B$ if $A \lesssim B$ and $A \gtrsim B$.
%%%%%%%%%%%%%%%%%%%%%%%%%%%%%%%%%%%%%%%%%%%%%%%%%%%%%%%%%%%%%%%%%%%%%%%
%%%%%%%%%%%%%%%%%%%%%%%%%%%%%%%%%%%%%%%%%%%%%%%%%%%%%%%%%%%%%%%%%%%%%%%
%%%%%%%%%%%%%%%%%%%%%%%%%%%%%%%%%%%%%%%%%%%%%%%%%%%%%%%%%%%%%%%%%%%%%%%

\section{Statement of the results}
 
 As mentioned above, we will construct strong solutions on the support of Gibbs measures and prove the invariance of such measures. For white noise measures, solutions are weak and belong to the space $C_T X^{-1}$. We start by discussing the former case.

\subsection{Global strong solutions invariant by Gibbs measure} 

\subsubsection{\bf Measures and dynamics on the space $\boldsymbol{E_N}$}  The operator $H$ is   self-adjoint   on  $L^2(\R^2)$, and    has the discrete  spectrum
$\{2N +2, N \in \N\}$. For $N\geq 0$, denote by $E_N$ the eigenspace  associated to the eigenvalue $2N+2$. This space  has dimension $N+1$. Consider  $(\phi_{N,k})_{0\leq k\leq N} $ any orthonormal basis of\;$E_N$. Define  $\gamma_N\in L^2(\Omega; E_N)$ by
\begin{equation*} 
\gamma_N(\om,x)=\frac{1}{\sqrt{N+1}}\sum_{k=0}^{N}g_{N,k}(\omega)\phi_{N,k}(x).
\end{equation*}
The distribution of the random variable $\gamma_{N}$ does not depend on the choice of the basis, and observe that the law of large numbers gives
\begin{equation*}
\Vert \gamma_N \Vert^2_{L^2(\R^2)}=\frac{1}{N+1}\sum_{k=0}^{N}\vert g_{N,k}(\omega)\vert^2\longrightarrow 1\quad \text{a.s. when}\quad N\longrightarrow+\infty.
\end{equation*}
Then we define the  probability  measure $\mu_N=\gamma_{\#} {\bf p}:={\bf p}\circ \gamma^{-1}_N$ on $E_N$.\medskip

The  $L^p$ properties of the measures $\mu_N$ have been studied in  \cite{PRT1} with an improvement in \cite{RT}. We mention in particular the following result 

\begin{theo}[{\cite{PRT1,RT}}] \label{thm21}
There exist $c,C_{1},C_{2}>0$  such that for all $N\geq N_0$
\begin{equation*} 
\mu_N \left[u\in E_N :  C_{1} N^{-1/2}(\log N)^{1/2}  \Vert u\Vert_{L^2(\R^{2})}\leq \Vert u\Vert_{L^{\infty}(\R^{2})} 
\leq C_{2}  N^{-1/2}(\log N)^{1/2} \Vert u\Vert_{L^2(\R^{2})}\,  \right] \geq 1- N^{-c}.
\end{equation*}
\end{theo}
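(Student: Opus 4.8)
The plan is to read the statement as a concentration estimate for the centered complex Gaussian field $u=\gamma_N(\om,\cdot)$ whose law is $\mu_N$. Writing $\Pi_N(x,y)=\sum_{k=0}^N\phi_{N,k}(x)\ov{\phi_{N,k}(y)}$ for the (basis--independent) kernel of the orthogonal projector onto $E_N$, a direct computation gives $\E\big[\gamma_N(x)\ov{\gamma_N(y)}\big]=\frac{1}{N+1}\Pi_N(x,y)$ and $\E\big[\gamma_N(x)\gamma_N(y)\big]=0$; in particular, for each fixed $x$, $\gamma_N(x)$ is a complex Gaussian of variance $v_N(x)=e_N(x)/(N+1)$ with $e_N(x):=\Pi_N(x,x)=\sum_k|\phi_{N,k}(x)|^2$. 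A soft first reduction is that $\|\gamma_N\|_{L^2}^2=\frac1{N+1}\sum_{k=0}^N|g_{N,k}|^2$ is an average of $N+1$ i.i.d.\ unit--mean variables, so a Bernstein bound yields $\Prob\big[\tfrac12\le\|\gamma_N\|_{L^2}^2\le 2\big]\ge 1-e^{-cN}$; on this event $\|u\|_{L^2}\approx1$, and it suffices to prove the two one--sided bounds $\|\gamma_N\|_{L^\infty}\lesssim N^{-1/2}(\log N)^{1/2}$ and $\|\gamma_N\|_{L^\infty}\gtrsim N^{-1/2}(\log N)^{1/2}$, each with probability $\ge1-N^{-c}$. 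The analytic inputs I would isolate --- both following from Mehler's formula for $e^{-tH}$ and Plancherel--Rotach asymptotics for Hermite/Laguerre functions, and established in \cite{PRT1,RT} --- are: (K1) $\sup_{x}e_N(x)\lesssim1$, together with fast decay of $e_N$ outside $|x|\lesssim\sqrt N$ and $\int_{\R^2}e_N=N+1$; and (K2) $e_N(x)\gtrsim1$ on a set $B_N$ of measure $\gtrsim N$, together with off--diagonal decay $|\Pi_N(x,y)|\le\tfrac12\sqrt{e_N(x)e_N(y)}$ for pairs $x,y\in B_N$ separated at the oscillation scale $|x-y|\gtrsim\delta N^{-1/2}$.

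For the upper bound I would run the $L^p$ moment method. By the Sobolev embedding $\W^{s,p}\hookrightarrow L^\infty(\R^2)$ (valid for $sp>2$, with constant uniform along $sp\equiv 4$) and the norm equivalence \eqref{equiv}, taking $s=4/p$,
\[
\E\,\|\gamma_N\|_{L^\infty}^p\lesssim \E\,\|H^{s/2}\gamma_N\|_{L^p}^p=\int_{\R^2}\E\,|H^{s/2}\gamma_N(x)|^p\,dx.
\]
Since $\phi_{N,k}\in E_N$, one has $H^{s/2}\gamma_N=(2N+2)^{s/2}\gamma_N$, so $H^{s/2}\gamma_N(x)$ is complex Gaussian of variance $(2N+2)^s v_N(x)$; using $\E|Z|^p\lesssim(Cp\,\E|Z|^2)^{p/2}$ and (K1) to control $\int_{\R^2}e_N^{p/2}\lesssim C^{p}N$, I obtain
\[
\big(\E\,\|\gamma_N\|_{L^\infty}^p\big)^{1/p}\lesssim N^{1/p}\,\sqrt{p}\,\big((2N+2)^{s}(N+1)^{-1}\big)^{1/2}.
\]
Choosing $p\approx\log N$ makes $N^{1/p}$ and $(2N+2)^{s}=(2N+2)^{4/p}$ both $O(1)$, leaving $N^{-1/2}(\log N)^{1/2}$; a Chebyshev inequality at this $p$ then promotes this to the tail $\Prob\big[\|\gamma_N\|_{L^\infty}>C_2N^{-1/2}(\log N)^{1/2}\big]\le N^{-c}$.

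For the lower bound I would use a second--moment (Paley--Zygmund) argument. Fix $\lambda=C_1N^{-1/2}(\log N)^{1/2}$ with $C_1$ small, choose a maximal $\delta N^{-1/2}$--separated family $x_1,\dots,x_M\in B_N$ (so $M\approx N^{2}$ by (K2)), and set $Z_j=\mathbf 1\{|\gamma_N(x_j)|>\lambda\}$. Since $v_N(x_j)\gtrsim1/N$, each $\Prob[Z_j=1]=e^{-\lambda^2/v_N(x_j)}\ge N^{-\a}$ with $\a=\a(C_1)$ small, whence $\E\sum_j Z_j\gtrsim N^{2-\a}\to\infty$. The off--diagonal decay (K2) makes the pairs $\big(\gamma_N(x_i),\gamma_N(x_j)\big)$ almost independent, so $\mathrm{Cov}(Z_i,Z_j)$ is negligible and $\mathrm{Var}\big(\sum_jZ_j\big)\ll\big(\E\sum_jZ_j\big)^2$; then $\Prob\big[\sum_jZ_j=0\big]\le\mathrm{Var}/(\E\,\cdot\,)^2\le N^{-c}$, and on the complementary event some $|\gamma_N(x_j)|>\lambda$, i.e.\ $\|\gamma_N\|_{L^\infty}\ge\lambda$. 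Combining the two one--sided bounds with the $L^2$ reduction of the first paragraph gives the claim.

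The main obstacle is (K2), specifically the quantitative off--diagonal decay of the spectral projector $\Pi_N(x,y)$ that converts ``well--separated points'' into ``almost independent Gaussian values'': this is where the fine Hermite/Laguerre asymptotics genuinely enter, and where the sharpening of the probability to $1-N^{-c}$ in \cite{RT} improves on \cite{PRT1}. By contrast the upper bound uses only the soft diagonal bound (K1) and is essentially automatic once the moment method is in place.
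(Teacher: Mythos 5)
First, a point of comparison: the paper does not actually prove Theorem \ref{thm21} --- its entire proof is the one-line remark that the statement is \cite[Theorem 3.8]{RT} applied with $h=N^{-1}$ and $d=2$ --- so the benchmark is the argument of the cited references, and your sketch does reconstruct its general shape. Your upper bound is correct and is essentially the standard moment-method proof: your hypothesis (K1) is exactly the Koch--Tataru bound \eqref{det} quoted right after the theorem (by the reproducing-kernel property, $\sup_x e_N(x)=\sup\{\|u\|_{L^\infty}^2 : u\in E_N,\ \|u\|_{L^2}=1\}$), the identity $H^{s/2}\gamma_N=(2N+2)^{s/2}\gamma_N$ is legitimate because $\gamma_N\in E_N$, and the choice $p\approx\log N$ with $sp=4$ fixed, followed by Chebyshev, is the usual trick; together with the Bernstein reduction for $\|\gamma_N\|_{L^2}$, this half is solid.

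The lower bound, however, has a genuine gap at the variance step. From your (K2) you only know the normalized correlations satisfy $\rho_{ij}:=|\Pi_N(x_i,x_j)|/\sqrt{e_N(x_i)e_N(x_j)}\le 1/2$ for separated points, and \emph{bounded} correlation does not make $\mathrm{Cov}(Z_i,Z_j)$ negligible. Test case: $\gamma_N(x_j)=2^{-1/2}(W+W_j)$ with $W,W_1,\dots,W_M$ i.i.d.\ complex Gaussians of variance $v$; all pairwise correlations equal $1/2$, yet $\Prob\big[|\gamma_N(x_i)|>\lambda,\ |\gamma_N(x_j)|>\lambda\big]\approx e^{-2\lambda^2/((1+\rho)v)}=N^{-4\a/3}\gg N^{-2\a}=\Prob[Z_i=1]\,\Prob[Z_j=1]$, so $\mathrm{Var}\big(\sum_j Z_j\big)\approx M^2N^{-4\a/3}\gg\big(\E\sum_j Z_j\big)^2$ and the second-moment inequality only yields $\Prob\big[\sum_j Z_j>0\big]\gtrsim N^{-2\a/3}$, which is vacuous. (Note that in this test case the conclusion of the theorem still holds --- the independent parts $W_j$ alone produce a maximum of size $\sqrt{v\log M}$ --- so it is your method, not the statement, that breaks.) Two repairs: (a) keep Paley--Zygmund but upgrade (K2) to genuine off-diagonal decay, $\rho_{ij}\lesssim(\sqrt N\,|x_i-x_j|)^{-1/2}$, which is the Bessel-type bulk asymptotics of the spectral function that \cite{PRT1,RT} actually establish, and then sum the covariances dyadically in the separation using a quantitative Gaussian correlation bound; or (b), cheaper, drop Paley--Zygmund altogether: your stated (K2) already gives the canonical-metric separation $\E|\gamma_N(x_i)-\gamma_N(x_j)|^2\ge v_N(x_i)+v_N(x_j)-\sqrt{v_N(x_i)v_N(x_j)}\gtrsim 1/N$ for the $M\approx N^2$ net points, so Sudakov minoration yields $\E\sup_j|\gamma_N(x_j)|\gtrsim N^{-1/2}(\log N)^{1/2}$, and Borell--TIS concentration with $\sigma^2=\sup_x v_N(x)\lesssim 1/N$ converts this into a failure probability $\le N^{-c}$. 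Route (b) closes the proof from exactly the inputs you isolated.
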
 
 
This proposition is a direct application of \cite[Theorem 3.8]{RT} with $h=N^{-1}$ and $d=2$. Notice that for all $u\in E_N$, we have $\Vert u\Vert_{\H^s} =(2N+2)^{s/2}\Vert u\Vert_{L^2}$. The best (deterministic) $L^{\infty}$-bound for an eigenfunction $u\in E_N$ is given by Koch-Tataru\;\cite{KoTa} 
\begin{equation}\label{det}
 \Vert u\Vert_{L^{\infty}(\R^{2})}  \leq C  \Vert u\Vert_{L^2(\R^{2})},
\end{equation}
and the previous estimate is optimal, since it is saturated by the radial Hermite functions. Therefore the result of Theorem\;\ref{thm21} shows that there is almost a gain of one derivative compared to the deterministic estimate\;\eqref{det}.\medskip

It turns out that the measures $\mu_N$ are invariant under the flow of~\eqref{CR}, and we have the following result.
 
\begin{theo}\label{thmN}
For all $N\geq 1$, the measure $\mu_N$ is invariant under the flow $\Phi$ of~\eqref{CR} restricted to\;$E_N$. Therefore, by the Poincar\'e theorem, $\mu_N$-almost all $u\in E_N$ is recurrent in the following sense: for $\mu_N$-almost all $u_0\in E_N$ there exists a sequence of times $t_n\longrightarrow +\infty$ so that 
\begin{equation*}
\lim_{n\to +\infty} \big\Vert  \Phi({t_n})u_0-u_0 \big\Vert_{L^2(\R^2)}=0.
\end{equation*}
\end{theo}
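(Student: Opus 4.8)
The plan is to establish invariance of $\mu_N$ under the flow $\Phi$ restricted to $E_N$ by exhibiting the dynamics on this finite-dimensional space as a Hamiltonian system that preserves both Lebesgue measure (via Liouville's theorem) and the $L^2$ mass, since $\mu_N$ is built from a Gaussian whose density depends only on the $L^2$ norm. The recurrence statement will then follow immediately from the Poincar\'e recurrence theorem, so the real content is entirely in the invariance of the measure.

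First I would fix an orthonormal basis $(\phi_{N,k})_{0\le k\le N}$ of $E_N$ and write $u=\sum_{k=0}^{N} c_k \phi_{N,k}$, identifying $E_N$ with $\C^{N+1}$ via the coordinates $(c_k)$. Under this identification the Gaussian defining $\gamma_N$ pushes forward to a measure on $\C^{N+1}$ whose density with respect to Lebesgue measure is proportional to $\e^{-(N+1)\sum_k |c_k|^2}$, i.e.\ a function of the mass $\|u\|_{L^2}^2=\sum_k|c_k|^2$ alone. Hence to prove invariance of $\mu_N$ it suffices to show two things: (i) the flow $\Phi$ on $E_N$ preserves Lebesgue measure $\prod_k d(\Re c_k)\,d(\Im c_k)$, and (ii) the flow preserves the mass $\sum_k |c_k|^2$. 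Point (ii) is exactly the conservation of the $L^2$ norm $\int_{\R^2}|u|^2\,dx$ recorded among the conserved quantities of~\eqref{CR}, which holds on all of $L^2$ and in particular on the invariant subspace $E_N$.

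For point (i) I would use the Hamiltonian structure described in the introduction. Restricted to the finite-dimensional space $E_N$, equation~\eqref{CR} becomes a finite system of ODEs $i\dot c_k=\partial_{\overline{c_k}}\mathcal{E}$ for the smooth (indeed polynomial) Hamiltonian $\mathcal{E}(u)=\mathcal{E}(u,u,u,u)$ restricted to $E_N$. Writing this in real coordinates, the vector field is Hamiltonian with respect to the standard symplectic form, so its divergence vanishes; by Liouville's theorem the associated flow preserves Lebesgue measure. One should also note that global well-posedness on $L^2$ guarantees $\Phi(t)$ is a well-defined global flow on the finite-dimensional $E_N$, so no issues of finite-time blow-up arise. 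Combining (i) and (ii): since $\mu_N$ has a density against Lebesgue that is a function of a conserved quantity, and Lebesgue measure is itself invariant, the full measure $\mu_N$ is invariant under $\Phi(t)$ for every $t$.

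Finally, having established that $(\Phi(t))_t$ is a measure-preserving flow of the probability space $(E_N,\mu_N)$, I would apply the Poincar\'e recurrence theorem on the probability space $(E_N,\mu_N)$: for any measurable set $A$ of positive measure, $\mu_N$-almost every point of $A$ returns to $A$ infinitely often. Taking a countable basis of balls in the separable metric space $(E_N,\|\cdot\|_{L^2})$ and applying this to each ball yields that $\mu_N$-almost every $u_0$ admits times $t_n\to+\infty$ with $\|\Phi(t_n)u_0-u_0\|_{L^2}\to 0$, which is the claimed recurrence. The main obstacle, though it is a mild one, is the bookkeeping in point (i): one must verify carefully that the coordinate expression of the CR vector field on $E_N$ is genuinely Hamiltonian with the correct symplectic normalization so that Liouville applies cleanly; everything else is a direct invocation of conservation of mass and the classical recurrence theorem.
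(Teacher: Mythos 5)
Your proposal is correct and follows essentially the same route as the paper: the paper's proof likewise writes $d\mu_N$ explicitly as $C_N\,\e^{-(N+1)\sum_k |c_{N,k}|^2}\prod_k da_{N,k}\,db_{N,k}$ in coordinates on $E_N$, invokes Liouville's theorem for the Hamiltonian flow to preserve Lebesgue measure, and uses conservation of $\Vert u_N\Vert_{L^2}^2$ to conclude invariance, with Poincar\'e recurrence then applied exactly as you describe. The only difference is presentational: the paper states this in three lines, while you spell out the identification $E_N\simeq\C^{N+1}$, the divergence-free property of the Hamiltonian vector field, and the countable-basis argument for recurrence, all of which are correct elaborations.
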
 

In the previous result, one only uses the invariance of  the probability measure $\mu_N$ by the flow, and no additional property of the equation~\eqref{CR}.

\subsubsection{\bf Gibbs Measure on the space $\boldsymbol X_{}^0( \boldsymbol \R^2)$ and a well-posedness result} 
 
In the sequel we either consider the family  $(\phi_{n}^{rad})_{n\geq 0}$ of the radial Hermite functions, which are eigenfunctions of $H$ associated to the eigenvalue 
$\lambda_n^{rad} = 4n+2$; or the family  $(\phi_{n}^{hol})_{n\geq 0}$ of the holomorphic Hermite functions, which are eigenvalues of $H$ associated to the eigenvalue $\lambda_n^{hol} = 2n+2$. Set
$$X_{rad}^0(\R^2) =\cap_{\s>0}  \H^{-\s}_{rad}(\R^2),$$
\begin{equation*}
X_{hol}^{0}(\R^2) := \big( \cap_{\sigma>0} \H^{-\sigma} (\R^2)\big)\cap (\mathcal{O}(\C)\e^{-\vert z \vert^2/2}).
\end{equation*}
In the following, we denote $X_{\star}^{0}(\R^2)$ for $X_{rad}^{0}(\R^2)$ or $X_{hol}^{0}(\R^2)$, $\phi_{n}^{\star}$ for $\phi_{n}^{rad}$ or $\phi_{n}^{hol}$, etc...

Define now $\gamma_{\star} \in L^2(\Omega; X_{\star}^0(\R^2) )$ by
\begin{equation*} 
\gamma_{\star}(\om,x)=\sum_{n=0}^{+\infty}  \frac{g_{n}(\omega)}{\sqrt{\lambda_n^\star}}\phi^\star_{n}(x),
\end{equation*}
and consider the Gaussian  probability  measure $\mu_{\star}=(\gamma_\star)_{\#} {\bf p}:={\bf p}\circ \gamma_{\star}^{-1}$.

\begin{lemm}\ph \label{lema}
In each of the previous cases, the measure $\mu_\star$  is a probability measure on $X_{\star}^0(\R^2)$.
\end{lemm}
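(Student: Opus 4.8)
The plan is to prove the almost-sure convergence of the Gaussian series defining $\gamma_\star$ in the space $X_\star^0(\R^2)$; once this is known, $\gamma_\star$ is a well-defined measurable random variable valued in $X_\star^0(\R^2)$ and, since ${\bf p}$ is a probability measure, its push-forward $\mu_\star=(\gamma_\star)_\#{\bf p}$ is automatically a Borel probability measure carried by $X_\star^0(\R^2)$. So the entire content of the lemma is the statement that $\gamma_\star\in X_\star^0(\R^2)$ almost surely.

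First I would evaluate the Sobolev norms spectrally. As the $\phi_n^\star$ form an orthonormal eigenbasis of $H$ with $H\phi_n^\star=\lambda_n^\star\phi_n^\star$, any $u=\sum_n c_n\phi_n^\star$ satisfies $\|u\|_{\H^{-\sigma}}^2=\sum_n(\lambda_n^\star)^{-\sigma}|c_n|^2$. Plugging in $c_n=g_n(\omega)/\sqrt{\lambda_n^\star}$ and using the independence and the normalization $\E|g_n|^2=1$ gives, for each fixed $\sigma>0$,
\[
\E\,\big\|\gamma_\star\big\|_{\H^{-\sigma}}^2=\sum_{n\geq 0}\frac{1}{(\lambda_n^\star)^{1+\sigma}}.
\]
Since $\lambda_n^{rad}=4n+2$ and $\lambda_n^{hol}=2n+2$ both satisfy $\lambda_n^\star\approx n$, and the exponent $1+\sigma$ exceeds $1$, this series converges; applying the same computation to the tail $\sum_{n\geq M}$ shows that the partial sums of $\gamma_\star$ are Cauchy in $L^2(\Omega;\H^{-\sigma})$, so $\gamma_\star$ is well defined there, and in particular $\gamma_\star\in\H^{-\sigma}(\R^2)$ almost surely, for each fixed $\sigma>0$.

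To pass to the full intersection I would use the monotonicity of the scale: if $0<\sigma'<\sigma$ then $\H^{-\sigma'}\subset\H^{-\sigma}$, so $X_\star^0=\cap_{\sigma>0}\H^{-\sigma}$ equals the countable intersection $\cap_{k\geq 1}\H^{-1/k}$. Each event $\{\gamma_\star\in\H^{-1/k}\}$ has full probability, and a countable intersection of almost-sure events remains almost sure, whence $\gamma_\star\in X_\star^0(\R^2)$ with probability one. In the holomorphic case one must further check that the limit lies in $\mathcal{O}(\C)\e^{-\vert z\vert^2/2}$: writing $\phi_n^{hol}(x)=(\pi n!)^{-1/2}z^n\e^{-\vert z\vert^2/2}$ with $z=x_1+ix_2$, the series reads $\big(\sum_n(\pi n!)^{-1/2}c_n z^n\big)\e^{-\vert z\vert^2/2}$, and since $|c_n|\lesssim|g_n|/\sqrt{n}$ grows at most subexponentially almost surely while $(n!)^{-1/(2n)}\to 0$, the Cauchy--Hadamard criterion shows that the power series has infinite radius of convergence and defines an entire function almost surely; the radial case needs no such argument, as $L^2_{rad}(\R^2)$ is a closed subspace stable under the $\H^{-\sigma}$ limit.

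I do not anticipate a genuine obstacle: the estimate is the standard convergence argument for Gaussian random series, and the only point demanding care is the upgrade from ``for each fixed $\sigma$, almost surely'' to ``almost surely, for all $\sigma$ at once.'' This is exactly where the reduction of the uncountable intersection $\cap_{\sigma>0}$ to a countable one, justified by the monotonicity of the $\H^{-\sigma}$ scale, is the load-bearing step.
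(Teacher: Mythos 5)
Your proof is correct and follows essentially the same route as the paper: the identical $L^2(\Omega)$ computation of the $\H^{-\sigma}$ norm gives almost-sure membership in each $\H^{-\sigma}(\R^2)$, and entireness of the power series in the holomorphic case is obtained from an almost-sure subexponential growth bound on the Gaussians (the paper quotes a large-deviation lemma of Colliander--Oh where you invoke the standard Borel--Cantelli fact together with Cauchy--Hadamard, and it treats only the holomorphic case explicitly). Your explicit reduction of the uncountable intersection $\cap_{\sigma>0}\H^{-\sigma}$ to the countable one $\cap_{k\geq 1}\H^{-1/k}$ is a point the paper leaves implicit, but the argument is the same in substance.
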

 
Notice that since~\eqref{CR} conserves the $\mathcal{H}^1$ norm, $\mu_\star$ is formally invariant by its flow. More generally, we can define a family $(\rho_{\star,\beta})_{\beta\geq 0}$ of  probability  measures on $X_{\star}^0(\R^2)$ which are formally invariant by~\eqref{CR} in the following way: define for $\beta\geq 0$ the measure $\rho_{\star}=\rho_{\star,\b}$ by
 \begin{equation}\label{defrho}
 d\rho_{\star}(u)=C_{\b}\e^{-\beta \mathcal {E}(u)}d\mu_{\star}(u),
 \end{equation}
 where $C_\b>0$ is a normalising constant. In Lemma \ref{lem.born}, we will show that $\mathcal E(u)<+\infty$, $\mu_{\star}$ a.s., which enables us to define this probability measure.

For all $\beta\geq 0$, $\rho_{\star}\big(X_{\star}^0(\R^2)   \big)=1$ and  $\rho_{\star}\big(L^2_{\star}(\R^2)   \big)=0$.
\begin{rema}
 Observe that we could also give a sense to a generalised version of \eqref{defrho} when $\beta<0$ using the renormalizing method of Lebowitz-Rose-Speer. We do not give the details and refer to \cite{BTT} for such a construction.
 \end{rema}

We are now able to state the following global existence result.

\begin{theo}\label{thm1} 
Let $\b\geq 0$.  There exists a set $\Sigma \subset X_{\star}^{0}(\R^{2})$ 
of full $\rho_\star$ measure so that for every $f\in \Sigma$ the
equation~\eqref{CR} with initial condition $u(0)=f$ has a unique global solution $u(t)=\Phi(t)f$ such that for any $0<s<1/2$
$$
u(t)-f\,\in\,\mathcal{C}\big( \R; \H^s(\R^2) \big).
$$
Moreover, for all $\s>0$ and $t\in \R$
\begin{equation*}
\|u(t)\|_{\H^{-\s}(\R^2)}\leq C\bigl( \Lambda(f,\s)+\ln^{\frac12} \big(1+|t|\big)\bigr),
\end{equation*}
and the constant $\Lambda(f,\s)$ satisfies the bound 
$
\mu_{\star}\big(f :\Lambda(f,\s)>\lambda \big) \leq C\e^{-c\lambda^{2}}.
$
\item Furthermore, the measure $\rho_\star$ is invariant by $\Phi$: For any $\rho_\star$ measurable set $A\subset \Sigma$, for any $t\in \R$, $\rho_\star(A)=\rho_\star(\Phi(t)(A))$.
 \end{theo}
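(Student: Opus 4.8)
The plan is to follow Bourgain's invariant-measure globalization scheme, adapted to the sub-$L^2$ regularity of the support of $\rho_\star$. Since \eqref{CR} has trivial linear part, I would work with the decomposition $u=f+v$, where $v(t)=-i\int_0^t\T(u,u,u)(\tau)\,d\tau$, and the whole point is that the Duhamel remainder $v$ should be smoother than the data: the target is $v\in\mathcal{C}\big([-T,T];\H^s\big)$ for $0<s<1/2$, while $f$ sits only in $X_\star^0$.

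First I would establish a probabilistic local well-posedness statement by a contraction mapping for $v$. The analytic core is a family of trilinear smoothing estimates for $\T$ exploiting its special structure --- in the holomorphic case the explicit coefficients in \eqref{tfi}, and the analogous selection rules in the radial case. The genuinely rough contribution $\T(f,f,f)$ cannot be treated deterministically; instead I would use the Gaussian structure of $f\sim\mu_\star$, computing the $L^2(\Omega)$ norm of the order-three Wiener chaos $\T(\gamma_\star,\gamma_\star,\gamma_\star)$ in $\H^s$ and upgrading to almost-sure finiteness with Gaussian tails via hypercontractivity. The terms carrying at least one factor of $v$ would be handled by the deterministic multilinear estimates, so that the map $v\mapsto -i\int_0^t\T(f+v,f+v,f+v)$ contracts on a small ball of $\mathcal{C}\big([-T,T];\H^s\big)$, giving local existence and uniqueness in the class $f+\mathcal{C}\big([-T,T];\H^s\big)$.

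For the globalization I would pass through the finite-dimensional truncations $\Phi_N$ obtained by projecting the nonlinearity onto $\mathrm{Span}\{\phi_n^\star\}_{n\leq N}$. These are Hamiltonian ODE flows, hence globally defined, and by Liouville's theorem together with conservation of the mass and of $\|u\|_{\H^1}^2$ --- both following from the gauge invariances $u\mapsto\e^{i\theta}u$ and $u\mapsto\e^{i\theta H}u$ that the truncated Hamiltonian respects thanks to the resonance relation $n_1+n_2=n_3+n_4$ --- the truncated Gibbs measures $\rho_{\star,N}$ are invariant under $\Phi_N$. The quantitative heart is then that, uniformly in $N$, invariance yields $\rho_{\star,N}\big(\|\Phi_N(t)f\|_{\H^{-\s}}>\lambda\big)=\rho_{\star,N}\big(\|f\|_{\H^{-\s}}>\lambda\big)\leq C\e^{-c\lambda^2}$, the tail coming from the concentration of the $\H^{-\s}$ norm of $\gamma_\star$. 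Summing over $\sim(1+|t|)$ discrete times and taking $\lambda\approx\ln^{1/2}(1+|t|)$ renders the total measure of the bad set summable; a Borel--Cantelli argument over dyadic time horizons then produces the full-measure set $\Sigma$, the random constant $\Lambda(f,\s)$ with its Gaussian tail, and the logarithmic growth bound. Iterating the local theory --- refreshed at each restart by the invariance, which preserves the law of $\Phi(t)f$ --- yields global existence, and letting $N\to\infty$ transfers the bounds to the limiting flow.

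Invariance of $\rho_\star$ under $\Phi$ would finally be obtained by passing to the limit in the exact invariance of $\rho_{\star,N}$ under $\Phi_N$: one shows $\Phi_N(t)f\to\Phi(t)f$ for $f$ in a full-measure set and combines this with the convergence $\rho_{\star,N}\to\rho_\star$ and the uniform bounds to conclude $\rho_\star(\Phi(t)(A))=\rho_\star(A)$ for measurable $A\subset\Sigma$. I expect the main obstacle to be the local theory together with the uniformity of the limits: establishing the trilinear smoothing with a genuine gain of almost half a derivative in negative regularity, controlling $\T(f,f,f)$ probabilistically, and propagating every constant uniformly in the truncation parameter $N$ so that both the growth bound and the invariance survive the passage $N\to\infty$.
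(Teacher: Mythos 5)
Your global scheme (truncated Hamiltonian flows $\Phi_N$, invariance of the truncated Gibbs measures by Liouville plus conservation of mass and of $\sum_n \lambda_n|c_n|^2$, union bounds over discrete times, Borel--Cantelli over dyadic horizons, then passage to the limit for the invariance of $\rho_\star$) is indeed the paper's scheme. But your local theory contains a genuine gap, and it sits exactly where you locate the randomness. You propose to randomize only the zeroth iterate, estimating $\T(f,f,f)$ as a third Wiener chaos, and to handle $\T(f,f,v)$ and $\T(f,v,v)$ ``by the deterministic multilinear estimates.'' This cannot close: $\mu_\star(L^2(\R^2))=0$, so a $\mu_\star$-typical $f$ has \emph{no} finite deterministic norm at any regularity $\geq 0$, while every available deterministic trilinear bound for $\T$ --- by duality, Strichartz, and the fractional Leibniz rule, as in Lemma \ref{lemTri2} --- measures each argument in norms of the type $\big\Vert \e^{-itH}(\cdot)\big\Vert_{L^{p}_{t}\W^{s,q}}$ with $s\geq 0$. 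For the slots occupied by $f$, finiteness of these norms is itself a probabilistic fact. The paper makes precisely this the locus of randomness: via Khintchine and the $L^p$ decay of Hermite functions (Lemmas \ref{lem.born} and \ref{ld}), it proves Gaussian-tail bounds on $\Vert \e^{-itH}f\Vert_{L^4\W^{s,4}}$ (radial case), resp.\ on the $L^{8/3}$ and $L^{p(s)}\W^{s,p(s)}$ norms with $p(s)=\frac{4}{1-2s}$ (holomorphic case), defines the sets $A_\star(D)$ with $\mu_\star(A_\star(D)^c)\leq C\e^{-cD^2}$, and then runs a purely deterministic contraction for $v$ in $\mathcal{C}([-\tau,\tau];\H^s)$ with $\tau\sim cD^{-2}$ (Proposition \ref{proplocal}), in which $\T(f,f,f)$ requires no chaos computation at all. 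In short: the one term you single out as irreducibly probabilistic becomes deterministic once the right random Strichartz norms are in place, and the mixed terms you declare deterministic are the ones that force that probabilistic input. (A chaos computation of the type you describe is what the paper performs for the white noise measure in Proposition \ref{Prop.cauchy}, where the data is too rough for Strichartz norms --- but there it yields only the weak solutions of Theorem \ref{thm2}, not a contraction.)

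Two secondary points. First, hypercontractivity for a third-order chaos gives tails $\e^{-c\lambda^{2/3}}$, not $\e^{-c\lambda^{2}}$, so your route would not directly produce the stated Gaussian tail for $\Lambda(f,\sigma)$; in the paper this tail descends from the first-chaos (Khintchine) estimates behind $A_\star(D)$. Second, your union bound is run on the level sets of $\Vert\Phi_N(t)f\Vert_{\H^{-\sigma}}$, but the quantity that must be controlled at each restart is membership in $A_\star(D)$, since that is what determines the local existence time $\tau(D)\sim D^{-2}$; the paper accordingly iterates on the events $\Phi_N(\pm T_{i,j})f\in A(D_{i,j+1})$ (the sets $\Sigma_{N,i}$, following de Suzzoni), and the logarithmic bound on $\Vert u(t)\Vert_{\H^{-\sigma}}$ is an \emph{output} of that iteration rather than its engine. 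The remaining ingredients of your sketch --- gauge symmetries yielding the conserved quantities for the truncated flows, convergence $\rho^N_\star\to\rho_\star$ of the densities, approximation $\Phi_N\to\Phi$ on $A_\star(D)$, and the limiting argument for invariance --- do match the paper.
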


\subsection{White noise measure on the space \texorpdfstring{$ \boldsymbol X_{\boldsymbol hol}^{-1}(\R^{2})$}{X(\R\texttwosuperior)} and weak solutions} 
Our aim is now to construct weak solutions on the support of the white noise measure. 
Consider the   Gaussian   random variable
  \begin{equation}\label{def.phi}
\gamma(\om,x)=\sum_{n=0}^{+\infty}  g_{n}(\omega) \phi_{n}^{hol}(x)=\frac{1}{\sqrt{\pi }}\Big(\,\sum_{n=0}^{+\infty}  \frac{{(x_1+i x_2)^ng_{n}(\omega)}}{\sqrt{ n!}}\,\Big)  \e^{-\vert x\vert^2/2 },
\end{equation}
 and the measure $\mu={\bf p}\circ \gamma^{-1}$. As in Lemma \ref{lema} we can show that the measure $\mu$  is a probability measure on 
 \begin{equation*}
X_{hol}^{-1}(\R^2) := \big( \cap_{\sigma>1} \H^{-\sigma} (\R^2)\big)\cap (\mathcal{O}(\C)\e^{-\vert z \vert^2/2}).
\end{equation*}
 
Since $\Vert u\Vert_{L^2(\R^2)}$ is preserved by~\eqref{CR}, $\mu$ is formally invariant under~\eqref{CR}.  We are not able to define a flow at this level of regularity, however using compactness arguments combined with probabilistic methods, we will construct weak solutions.
\begin{theo}\ph\label{thm2} 
There exists a set $\Sigma \subset X_{hol}^{-1}(\R^{2})$ 
of full $\mu$ measure   so that for every $f\in \Sigma$ the
equation~\eqref{CR} with 
initial condition $u(0)=f$ has a   solution 
\begin{equation*}
u\in \bigcap_{\s>1}\mathcal{C}\big(\R\, ; \H^{-\s}(\R^2) \big).
\end{equation*}
The distribution of the random variable $u(t)$ is equal to  $\mu$ (and thus independent of $t\in \R$):
\begin{equation*}
\mathscr{L}_{X^{-1}(\R^2)}\big(u(t)\big)=\mathscr{L}_{X^{-1}(\R^2)}\big(u(0)\big)=\mu,\quad \forall\,t\in \R.
\end{equation*} 
\end{theo}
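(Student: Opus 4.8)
The plan is to construct the solution by a finite-dimensional (spectral) approximation followed by a compactness argument in the space of measures, in the spirit of \cite{AC,DPD,BTT2}. First I would introduce the truncation: let $\Pi_N$ denote the orthogonal projection onto $E^{(N)}:=\mathrm{Span}(\phi_0^{hol},\dots,\phi_N^{hol})$ and consider, for $u_N(t)\in E^{(N)}$,
\begin{equation*}
i\partial_t u_N=\Pi_N\T(u_N,u_N,u_N),\qquad u_N(0)=\Pi_N f.
\end{equation*}
By \eqref{tfi} this is an explicit finite system of ODEs for the coordinates $c_n(t)$, $0\le n\le N$, governed by the coefficients $\alpha_{n_1,n_2,n_3,n_4}$. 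Since the truncated mass $\sum_{n\le N}|c_n|^2$ is conserved, the flow is global; moreover the truncated system is Hamiltonian, with Hamiltonian $u\mapsto\mathcal{E}(\Pi_N u)$, so by Liouville's theorem its flow preserves the Lebesgue measure $\prod_{n\le N}dc_n$. Combined with conservation of $\sum_{n\le N}|c_n|^2$, this shows that the Gaussian measure $\mu_N=\prod_{n\le N}\frac1\pi e^{-|c_n|^2}dc_n$, i.e. the law of $\Pi_N\gamma$, is invariant under the truncated flow. In particular $\mathscr{L}(u_N(t))=\mu_N$ for every $t$, and $\mu_N\to\mu$ weakly in $X_{hol}^{-1}(\R^2)$ as $N\to\infty$.

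Next I would derive uniform-in-$N$ bounds. The invariance of $\mu_N$ gives, for each fixed $t$, a bound on $u_N(t)$ in $\H^{-\s}$ (any $\s>1$) that is uniform in $N$ and $t$; integrating the explicit tail estimates for the white-noise Gaussian yields uniform bounds in $L^p({\bf p};\H^{-\s})$. To upgrade this to compactness on the path space I would control the time derivative: using the equation and the coefficients $\alpha_{n_1,n_2,n_3,n_4}$, whose decay (by Stirling, of order $m^{-1/2}$ on the diagonal $n_1+n_2=n_3+n_4=m$) encodes a smoothing effect of $\T$ on the Bargmann--Fock space, I would estimate $\partial_t u_N=-i\Pi_N\T(u_N,u_N,u_N)$ in a space $\H^{-\s_1}$, uniformly in $N$. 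This produces a uniform bound in a space $W^{\alpha,p}([-T,T];\H^{-\s_1})$ of fractional regularity in time, which embeds compactly into $\mathcal{C}([-T,T];\H^{-\s})$ for a slightly larger $\s$.

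I would then let $\nu_N=\mathscr{L}(u_N)$ be the law of the path $t\mapsto u_N(t)$ on $\mathcal{C}([-T,T];\H^{-\s})$. By the previous bounds, Markov's inequality and the compact embedding show the family $\{\nu_N\}$ is tight, so by Prokhorov's theorem a subsequence converges weakly, $\nu_{N_k}\rightharpoonup\nu$. Since weak convergence does not give pointwise convergence of trajectories, I would invoke Skorohod's representation theorem to realise $\nu_{N_k}$ and $\nu$ on a common probability space as random paths $\tilde u_{N_k}\to\tilde u$ converging almost surely in $\mathcal{C}([-T,T];\H^{-\s})$. Passing to the limit in the Duhamel formulation
\begin{equation*}
\tilde u_{N_k}(t)=\tilde u_{N_k}(0)-i\int_0^t\Pi_{N_k}\T(\tilde u_{N_k},\tilde u_{N_k},\tilde u_{N_k})\,ds
\end{equation*}
identifies $\tilde u$ as a solution of \eqref{CR}; a diagonal extraction over $T\to\infty$ and over $\s>1$ then gives a solution in $\bigcap_{\s>1}\mathcal{C}(\R;\H^{-\s})$. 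The invariance $\mathscr{L}(\tilde u(t))=\mu$ for all $t$ follows since evaluation at a fixed time is continuous on the path space, so $\mathscr{L}(\tilde u(t))=\lim_k\mathscr{L}(\tilde u_{N_k}(t))=\lim_k\mu_{N_k}=\mu$.

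The main obstacle is the passage to the limit in the trilinear term at negative regularity. On the support of $\mu$ one only has $\tilde u\in\H^{-\s}$ with $\s>1$, so $\T(\tilde u,\tilde u,\tilde u)$ is a priori not even well defined, and almost-sure convergence in $\mathcal{C}([-T,T];\H^{-\s})$ is far too weak to control a trilinear expression on its own. The key will be to exploit the explicit and decaying coefficients $\alpha_{n_1,n_2,n_3,n_4}$ of \eqref{tfi} to show that $\T$ extends to a continuous trilinear map on the relevant negative-order holomorphic spaces, and then, using the uniform moment bounds transported from $\mu_N$ to absorb the high frequencies, to prove that $\Pi_{N_k}\T(\tilde u_{N_k},\tilde u_{N_k},\tilde u_{N_k})\to\T(\tilde u,\tilde u,\tilde u)$ in the sense needed for the Duhamel identity. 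This continuity and convergence of the nonlinearity, rather than the soft compactness machinery, is where the real work lies.
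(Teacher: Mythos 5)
Your soft architecture coincides with the paper's: finite-dimensional truncation, Liouville plus mass conservation to get an invariant Gaussian measure, moment bounds transported by invariance, tightness of the path-space laws, Prokhorov, Skorohod, passage to the limit in the Duhamel formula, identification of the fixed-time laws through the continuous evaluation map, and a diagonal extraction in $T$ to globalize. (One cosmetic difference: the paper's approximation \eqref{ODE} freezes the high modes, $(1-\Pi_N)u(t)=(1-\Pi_N)f$, so the full measure $\mu$ is exactly invariant for every $N$ and no weak convergence $\mu_N\to\mu$ is needed.) But your proposal stops exactly at the step you yourself identify as ``where the real work lies'', and the mechanism you sketch there would fail: $\T$ does \emph{not} extend to a continuous trilinear map on any space $\H^{-\s}$. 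Indeed, testing on a single mode $u=c\,\phi^{hol}_n$, formula \eqref{tfi} gives $\T(u,u,u)=\alpha_{n,n,n,n}|c|^2c\,\phi^{hol}_n$ with $\alpha_{n,n,n,n}=\frac{\pi}{8}\binom{2n}{n}4^{-n}\approx n^{-1/2}$, hence
\begin{equation*}
\|\T(u,u,u)\|_{\H^{-\s}}\;\approx\; n^{\s-\frac12}\,\|u\|_{\H^{-\s}}^{3},
\end{equation*}
which is unbounded for every $\s>1/2$; any continuous extension must lose at least $2\s-1$ derivatives, so no fixed negative-order space is stable under $\T$, and the almost-sure convergence of $\widetilde u_{N_k}$ in $\mathcal{C}_T\H^{-\tau}$ furnished by Skorohod cannot simply be fed into such a map.

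What replaces this in the paper is intrinsically probabilistic and constitutes the heart of the proof, Proposition \ref{Prop.cauchy}: the sequence $\T_N(u)$ is Cauchy in $L^p\big(X^{-1}(\R^2),d\mu;\H^{-\s}\big)$ with rate $M^{-\delta}$, proved by reducing to $p=2$ via Wiener chaos, expanding in the Hermite basis with \eqref{tfi}, and integrating in $d\mu$ so that all off-pairing Gaussian products vanish; only the paired sums survive and these converge by the combinatorial Lemmas \ref{lemB} and \ref{lemA}. It is Gaussian cancellation, not coefficient decay, that makes $\T(u)$ well defined $\mu$-almost surely. This estimate is then used twice more in ways your sketch does not supply. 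First, the uniform $W^{1,p}_T\H^{-\s}$ bound behind tightness comes from \eqref{Tlp} combined with the invariance of $\mu$ under $\Phi_N$ -- a deterministic uniform-in-$N$ bound on $\partial_t u_N$ for data merely bounded in $\H^{-\s}$ is unavailable, by the computation above. Second, the convergence of the nonlinearity is obtained from the decomposition (tildes dropped)
\begin{equation*}
\T_k(u_k)-\T(u)=\big(\T_k(u_k)-\T(u_k)\big)+\big(\T(u_k)-\T_M(u_k)\big)+\big(\T_M(u_k)-\T_M(u)\big)+\big(\T_M(u)-\T(u)\big),
\end{equation*}
where the third term converges by finite-dimensional continuity, and the remaining ones are controlled in $L^2\big(\widetilde\Omega\times[-T,T];\H^{-\s}\big)$, uniformly in $k$, by using the invariance of $\mu$ under the truncated flows to reduce space-time norms along the flow to the fixed-time quantity of Proposition \ref{Prop.cauchy}. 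In other words, invariance of the measure serves not only for tightness and for the law identity $\mathscr{L}_{X^{-1}}(u(t))=\mu$, but as the substitute for the continuity you were hoping for; without Proposition \ref{Prop.cauchy} and this transfer argument, your proposal does not close.
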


\begin{rema}
 One can also define the Gaussian measure $\mu={\bf p}\circ \gamma^{-1}$ on $X_{}^{-1}(\R^2) =\cap_{\s>1}  \H_{}^{-\s}(\R^2)$ by
  \begin{equation*} 
\gamma(\om,x)=\sum_{n=0}^{+\infty} \frac{1}{\sqrt{\lambda_n}} \sum_{k=-n}^{n}  {g_{n,k}(\omega)} \phi_{n,k}(x),\quad \lambda_n=2n+2
\end{equation*}
(where the $\phi_{n,k}$ are an orthonormal basis of eigenfunctions of the harmonic oscillator and the angular momentum operator). Since $\Vert u\Vert_{\H^1(\R^2)}$ is preserved by~\eqref{CR}, $\mu$ is formally invariant under~\eqref{CR}, but we are not able to obtain an analogous result in this case.

The same comment holds for  the white noise measure $\mu={\bf p}\circ \gamma^{-1}$ on $X_{rad}^{-1}(\R^2) =\cap_{\s>1}  \H_{rad}^{-\s}(\R^2)$ with 
\begin{equation*} 
\gamma(\om,x)=\sum_{n=0}^{+\infty}  g_{n}(\omega) \phi_{n}^{rad}(x),
\end{equation*}
which is also formally invariant under~\eqref{CR}.
 \end{rema}

\subsection{Plan of the paper}

The rest of the paper is organised as follows. In Section \ref{sect3} we prove the results concerning the strong solutions, and in Section \ref{sect4} we construct the weak solutions.
 
\section{Strong solutions}\label{sect3}
 
\subsection{ Proof of Theorem \ref{thmN}} 
 
The proof of Theorem \ref{thmN} is an application of the Liouville theorem. Indeed, write ${u_N=\sum_{k=0}^{N}c_{N,k}\phi_{N,k}\in E_N}$, then 
\begin{equation*}
d\mu_N= \frac{(N+1)^{N+1}}{\pi^{N+1}} \exp\Big({-\dis (N+1)\sum_{k=0}^{N}\vert c_{N,k}\vert^2}\Big)\prod_{k=0}^{N} d a_{N,k}\,d b_{N,k},
\end{equation*}

where $c_{N,k} = a_{N,k} + i b_{N,k}$. 
 
 The Lebesgue measure $\prod_{k=0}^{N} d a_{N,k}db_{N,k}$ is preserved since~\eqref{CR} is Hamiltonian and ${\sum_{k=0}^{N}\vert c_{N,k}\vert^2=\Vert u_N\Vert^2_{L^2}} $ is a constant of motion.

\subsection{ Proof of Theorem \ref{thm1}} 

We start with the proof of Lemma \ref{lema}.
 
\begin{proof}[Proof of Lemma \ref{lema}] We only consider the case $X_{\star}^{0}(\R^2)=X_{hol}^{0}(\R^2)$.  It is enough to show that      $\gamma_{hol}\in X_{hol}^{0}(\R^2)$, $\p$-a.s. First,
for all $\s>0$ we have 
\begin{equation}\label{est.phi}
\int_{\Omega}\Vert \gamma_{hol}   \Vert^2_{\H^{-\s}{(\R^2)}} d{\bf p}(\om)=\int_{\Omega} \sum_{n=0}^{+\infty} \frac{ \vert{g_{n}}\vert^2}{ (\lambda_n^{hol})^{\s+1}}    d{\bf p}(\om) =C \sum_{n=0}^{+\infty} \frac{1}{{(n+1)}^{\s+1}}<+\infty,
\end{equation}
therefore   $ \gamma_{hol} \in     \bigcap_{\s>0} L^2\big(\Omega\,; \, \H^{-\s}(\R^2)\big)$. Next, by \cite[Lemma 3.4]{CO},   for all $A\geq 1$ there exists a set $\Omega_A\subset \Omega$ such that ${\p} (\Omega^c_A)\leq \exp{(-A^{\delta})}$ and for all $\om \in \Omega_A$, $\eps>0$, $n\geq 0$
\begin{equation*}
\vert g_n{(\om)}\vert \leq C A (n+1)^{\eps}.
\end{equation*}
 Then for $  \om \in \bigcup_{A\geq 1} \Omega_A$, \;$\dis \sum_{n=0}^{+\infty}  \frac{{z^ng_{n}(\omega)}}{\sqrt{ \lambda_{n}^{hol}\,n!}} \in \mathcal{O}(\C)$.
\end{proof}

We first define a smooth version of the usual spectral projector. Let $\chi\in \mathcal{C}_{0}^{\infty}(-1,1)$,  so that $0\leq \chi\leq 1$, with $\chi=1$ on $[-\frac12,\frac12]$. We define  the operators $S_{N}=\chi\big(\frac{H}{\lambda_{N}}\big)$ as 
 
\begin{equation*} 
S_{N}\big(\sum_{n=0}^{\infty}c_n \phi^\star_{n}\big)=\sum_{n=0}^{\infty}\chi\big(\frac{\lambda_{n}^\star}{\lambda_{N}^\star}\big)c_n \phi_{n}^\star.
\end{equation*}
Then for all $1<p<+\infty$, the operator $S_N$ is bounded in $L^p(\R^2)$ (see \cite[Proposition 2.1]{Deng} for a proof).

\subsubsection{ \bf Local existence}
 It will be useful to work with an approximation of~\eqref{CR}.  We consider the dynamical system given by the Hamiltonian $\H_{N}(u):=\H(S_{N}u)$. This system reads
   \begin{equation}\label{eqtN} 
\left\{
\begin{aligned}
&i\partial_{t}u_N=\T_N(u_N), \quad   (t,x)\in \R\times \R^2,\\
&u_{N}(0,x)=f,
\end{aligned}
\right.
\end{equation}
 and  $\T_N(u_{N}):=S_N \T(S_N u,S_N u,S_N u)$. Observe that \eqref{eqtN} is a finite dimensional dynamical system on $\bigoplus_{k=0}^{N} E_{k}$ and that the projection of $u_{N}(t)$ on its complement is constant. For $\beta\geq 0$ and $N\geq 0$ we define the measures  $\rho^{N}_{\star}$ by
 \begin{equation*} 
 d\rho^{N}_{\star}(u)=C^{N}_{\b}\e^{-\beta \H_{N}(u)}d\mu_{\star}(u),
 \end{equation*}
 where $C^{N}_\b>0$ is a normalising constant. We have the following result 
 
 \begin{lemm}
 The system \eqref{eqtN} is globally well-posed in $L^{2}(\R^{2})$. Moreover, the measures  $\rho^{N}_{\star}$ are invariant by its flow denoted by $\Phi_{N}$.
  \end{lemm}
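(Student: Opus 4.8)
The plan is to observe that \eqref{eqtN} is, after projection, a Hamiltonian system of ordinary differential equations in finite dimension, and then to combine the Cauchy--Lipschitz theorem, conservation of the $L^2$ norm, Liouville's theorem and the conservation laws inherited from the resonant structure. First I would isolate the finite-dimensional part. Since $S_N=\chi(H/\lambda_N)$ is a diagonal multiplier in the Hermite basis and $\chi$ is supported in $(-1,1)$, the range of $S_N$ is contained in the finite-dimensional space $V_\J:=\mathrm{Span}\{\phi_n^\star:\ \lambda_n^\star<\lambda_N^\star\}$; write $\J$ for this finite set of indices and $\Pi$ for the orthogonal projection onto $V_\J$. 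Because $\T_N(u_N)=S_N\T(S_Nu_N,S_Nu_N,S_Nu_N)$ takes its values in $\mathrm{Range}(S_N)\subset V_\J$, the equation \eqref{eqtN} freezes the complementary modes, $(1-\Pi)u_N(t)\equiv(1-\Pi)f$, while $\Pi u_N$ obeys the closed system obtained by reading \eqref{eqtN} on the coordinates $c_n=\langle u_N,\phi_n^\star\rangle$, $n\in\J$. As the nonlinearity is a cubic polynomial in the $c_n$, it is smooth, and the Cauchy--Lipschitz theorem provides a unique maximal solution.

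To make this solution global I would propagate the $L^2$ norm. Setting $v=S_Nu_N$ and using that $S_N$ is self-adjoint, I compute $\frac{d}{dt}\Vert u_N\Vert_{L^2}^2=2\Re\langle\pa_t u_N,u_N\rangle=2\Re\big(-i\langle\T(v,v,v),v\rangle\big)=2\Re\big(-i\,\mathcal{E}(v)\big)$. Since $\mathcal{E}(v)=2\pi\int_{-\pi/4}^{\pi/4}\Vert\e^{-itH}v\Vert_{L^4}^4\,dt$ is real (indeed nonnegative), this derivative vanishes, so $\Vert u_N(t)\Vert_{L^2}=\Vert f\Vert_{L^2}$ for all times. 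This bounds $\Pi u_N$ in the Euclidean norm of $V_\J$, so the maximal solution cannot blow up; hence it is global and defines the flow $\Phi_N$.

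For the invariance of $\rho^N_\star$ I would use the product structure of the measure together with Liouville's theorem. The Hamiltonian $\H_N(u)=\mathcal{E}(S_Nu)$ depends only on the coordinates indexed by $\J$, and the Gaussian measure splits as $\mu_\star=\mu_\star^{\J}\otimes\mu_\star^{\J^c}$, so that $d\rho^N_\star=\big(C^N_\b\,\e^{-\b\H_N}\,d\mu_\star^{\J}\big)\otimes d\mu_\star^{\J^c}$, with $d\mu_\star^{\J}=C\exp\big(-\sum_{n\in\J}\lambda_n^\star|c_n|^2\big)\prod_{n\in\J}da_n\,db_n$ where $c_n=a_n+ib_n$. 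On the complementary modes the flow is the identity, hence $\mu_\star^{\J^c}$ is preserved, and it remains to treat the $\J$-factor. There $\Phi_N$ is the Hamiltonian flow of $\H_N$, so Liouville's theorem gives the invariance of the Lebesgue measure $\prod_{n\in\J}da_n\,db_n$, while $\H_N$ is conserved by construction. Finally, the quadratic weight $\sum_{n\in\J}\lambda_n^\star|c_n|^2=\Vert\Pi u_N\Vert_{\H^1}^2$ is conserved as well: the resonant selection rule $\lambda_{n_1}^\star+\lambda_{n_2}^\star=\lambda_{n_3}^\star+\lambda_{n_4}^\star$ on the support of $\alpha_{n_1n_2n_3n_4}$ makes $\mathcal{E}$, and therefore $\H_N=\mathcal{E}(S_N\cdot)$, invariant under the Hermite group $\e^{-isH}$, whose Noether invariant is precisely $\langle Hu,u\rangle$. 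Both factors of the density $\e^{-\b\H_N-\Vert\Pi u\Vert_{\H^1}^2}$ being conserved and the Lebesgue measure invariant, $\rho^N_\star$ is invariant under $\Phi_N$.

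The main obstacle is precisely this last conservation law: one must check that the spectrally truncated dynamics \eqref{eqtN} still conserves the weighted norm $\langle Hu,u\rangle$ defining the Gaussian reference measure, for only then is the Gaussian weight compatible with the Hamiltonian flow. This rests on the fact that the truncation $S_N$ is diagonal in the Hermite basis and hence does not break the resonance condition carried by the coefficients $\alpha_{n_1n_2n_3n_4}$; the remaining ingredients (the finite-dimensional reduction, Cauchy--Lipschitz, the $L^2$ bound, and Liouville's theorem) are then routine.
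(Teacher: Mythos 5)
Your proposal is correct and takes essentially the same route as the paper, whose own proof is a three-line sketch: global existence from conservation of $\|u_N\|_{L^2}$, and invariance of $\rho^N_\star$ from the Liouville theorem together with the conservation of $\sum_k \lambda_k |c_k|^2$, with details delegated to \cite[Lemma 8.1 and Proposition 8.2]{BTT}. Your Noether-style derivation of the conservation of $\sum_{n}\lambda_n^\star|c_n|^2$ for the truncated flow (invariance of $\H_N$ under $e^{-isH}$, which survives truncation because $S_N$ is diagonal in the Hermite basis) is a correct and clean way of filling in exactly the detail the paper cites away.
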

 
  \begin{proof}
  The global existence follows from the conservation of $\|u_{N}\|_{L^{2}(\R^{2})}$. The invariance of the measures is a consequence of the Liouville theorem and the conservation of $\sum_{k=0}^\infty \lambda_k |c_k|^2$ by the flow of~\eqref{CR} (see~\cite{FGH}). We refer to \cite[Lemma 8.1 and Proposition 8.2]{BTT} for the details.
    \end{proof}
  
  We now state a result concerning dispersive bounds of Hermite functions
  
  \begin{lemm}\label{lem.born}
For all $2\leq p\leq +\infty$, 
\begin{equation}\label{est1}
\Vert \phi_n^{hol}\Vert_{L^p(\R^d)}\leq  C n^{\frac1{2p}-\frac14},
\end{equation}
\begin{equation}\label{est2}
\|\phi^{rad}_n\|_{L^4(\R^d)}\leq C n^{-\frac{1}{4}} (\ln n)^{\frac14}.
\end{equation}
\end{lemm}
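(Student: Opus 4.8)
The bound \eqref{est1} for $\phi_n^{hol}$ is a direct computation, so I would dispatch it first. Since $|x_1+ix_2|=|x|$, one has $|\phi_n^{hol}(x)|=\frac{1}{\sqrt{\pi n!}}|x|^n e^{-|x|^2/2}$; passing to polar coordinates and substituting $u=pr^2/2$ turns the $L^p$ norm into a Gamma integral,
\[
\Vert \phi_n^{hol}\Vert_{L^p(\R^2)}^p=\frac{2\pi}{p\,(\pi n!)^{p/2}}\Big(\frac{2}{p}\Big)^{np/2}\Gamma\Big(\frac{np}{2}+1\Big).
\]
Applying Stirling's formula to both $\Gamma(np/2+1)$ and $n!$, the factors $n^{np/2}$ and the exponentials $e^{-np/2}$ cancel, as do all the powers of $p$, leaving $\Vert\phi_n^{hol}\Vert_{L^p}^p\approx n^{1/2-p/4}$ with a constant that stays bounded, uniformly in $p\in[2,\infty)$, after taking the $p$-th root. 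This gives \eqref{est1} (in fact an equivalence). The endpoint $p=\infty$ is even simpler: the maximum of $r\mapsto r^n e^{-r^2/2}$ is attained at $r=\sqrt n$, whence $\Vert\phi_n^{hol}\Vert_{L^\infty}\approx n^{-1/4}$.

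For \eqref{est2} I would first reduce to a one-dimensional Laguerre integral. Writing $\phi_n^{rad}(x)=\frac{1}{\sqrt\pi}L_n^{(0)}(|x|^2)e^{-|x|^2/2}$, polar coordinates followed by $u=r^2$ give
\[
\Vert \phi_n^{rad}\Vert_{L^4(\R^2)}^4=\frac{1}{\pi}\int_0^{+\infty}\big|\mathcal L_n(u)\big|^4\,du,\qquad \mathcal L_n(u):=L_n^{(0)}(u)\,e^{-u/2},
\]
where $\mathcal L_n$ is the $L^2(0,\infty)$-normalised Laguerre function. The plan is then to insert the classical pointwise bounds for $\mathcal L_n$ (as in the work of Muckenhoupt and of Askey--Wainger, or Thangavelu's monograph): with $\nu=4n+2$, up to constants $|\mathcal L_n(u)|\lesssim 1$ on $0\le u\le 1/\nu$, $|\mathcal L_n(u)|\lesssim (u\nu)^{-1/4}$ on the oscillatory range $1/\nu\le u\le \nu/2$, $|\mathcal L_n(u)|\lesssim \nu^{-1/4}(\nu^{1/3}+|u-\nu|)^{-1/4}$ in the turning-point zone $\nu/2\le u\le 3\nu/2$, and exponential decay for $u\ge 3\nu/2$.

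Raising these to the fourth power and integrating region by region is then routine. The first region contributes $O(\nu^{-1})$; the oscillatory range contributes $\nu^{-1}\int_{1/\nu}^{\nu/2}u^{-1}\,du\approx \nu^{-1}\ln\nu$; the turning-point zone contributes $\nu^{-1}\int_0^{\nu/2}(\nu^{1/3}+s)^{-1}\,ds\approx \nu^{-1}\ln\nu$; and the tail is negligible. Hence $\Vert\phi_n^{rad}\Vert_{L^4}^4\lesssim n^{-1}\ln n$, and taking fourth roots yields \eqref{est2}. The main obstacle is precisely this radial estimate: one must have the sharp pointwise Laguerre asymptotics in hand and then track carefully that the logarithm is produced by the oscillatory and turning-point ranges (and is genuinely present, so the bound is not wasteful), rather than being an artefact of the endpoints. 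The holomorphic bound, by contrast, needs nothing beyond the explicit formula and Stirling.
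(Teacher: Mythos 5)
Your proposal is correct, but on both estimates it takes a genuinely different route from the paper. For \eqref{est1} the paper does much less computation: it only establishes the endpoint $\Vert \phi_n^{hol}\Vert_{L^\infty}\leq C n^{-1/4}$ by Stirling (equivalently, your observation that $r^n e^{-r^2/2}$ peaks at $r=\sqrt n$) and then interpolates against the normalisation $\Vert \phi_n^{hol}\Vert_{L^2}=1$, which immediately gives $n^{\frac{1}{2p}-\frac14}$ for all $2\leq p\leq\infty$. Your exact evaluation of the $L^p$ norm as a Gamma integral is heavier but buys more: a two-sided asymptotic $\Vert\phi_n^{hol}\Vert_{L^p}\approx n^{\frac1{2p}-\frac14}$, with the uniformity in $p$ you claim indeed justified, since $\Gamma(x+1)=\sqrt{2\pi x}\,(x/e)^x\big(1+O(1/x)\big)$ uniformly for $x\geq 1$ and the leftover constant $C_p^{1/p}$ stays bounded on $[2,\infty)$. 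For \eqref{est2} the paper gives no argument at all and simply cites \cite[Proposition 2.4]{IRT}; your proof supplies the content behind that citation. The reduction to the $L^2(0,\infty)$-normalised Laguerre function $\mathcal L_n$ is the right change of variables (with $u=r^2$ one gets exactly $\Vert\phi_n^{rad}\Vert_{L^4}^4=\pi^{-1}\int_0^\infty|\mathcal L_n|^4\,du$), the Muckenhoupt/Askey--Wainger pointwise bounds with $\nu=4n+2$ are the correct input, and your region-by-region accounting is accurate: $O(\nu^{-1})$ from the Bessel region, $\approx\nu^{-1}\ln\nu$ from each of the oscillatory and turning-point (Airy) zones, exponentially small tail, hence $\Vert\phi_n^{rad}\Vert_{L^4}^4\lesssim n^{-1}\ln n$. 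In short, your write-up is self-contained and sharper (an equivalence for \eqref{est1}, and a proof rather than a reference for \eqref{est2}), while the paper's is shorter, replacing your Gamma-function computation by interpolation and outsourcing the Laguerre asymptotics.
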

  
  \begin{proof}
By Stirling, we easily get that $ \Vert \phi_n^{hol}\Vert_{L^{\infty}(\R^d)}\leq  C n^{-\frac14}$, which is~\eqref{est1} for $p=\infty$; the estimate for $2 \leq p \leq \infty$ follows by interpolation. For the the proof of \eqref{est2}, we refer to \cite[Proposition 2.4]{IRT}.
  \end{proof}

  \begin{lemm}\label{ld}
  \begin{enumerate}[(i)]
\item We have
\begin{multline}\label{b1}
\exists C>0,\exists c>0, \forall \lambda\geq 1, \forall N\geq 1,\\
\mu_{\star}\big(\,u\in X^{0}_{\star}(\R^{2}): \|e^{-itH}S_Nu\|_{L^{4}([-\frac{\pi}4,\frac{\pi}4]\times \R^{2})}>\lambda\,\big)\leq Ce^{-c\lambda^2}\,.
\end{multline}
\item There exists $\beta>0$ such that
\begin{multline}\label{b2}
\exists\, C>0,\exists\, c>0,\,\, \forall \lambda\geq 1,\, \forall N\geq N_0\geq 1,
\\
\mu_{\star}\big(\,u\in X^{0}_{\star}(\R^{2})\,:\,\|e^{-itH}(S_N-S_{N_{0}})u\|_{L^{4}([-\frac{\pi}4,\frac{\pi}4]\times \R^{2})}>\lambda\,\big)\leq 
Ce^{-cN_0^{\beta}\lambda^2}\,.
\end{multline}
\item In the holomorphic case: for all $2\leq p <+\infty$  and $s< \frac12-\frac1p$
\begin{equation}\label{b3}
\begin{split}
& \exists C>0,\exists c>0, \forall \lambda\geq 1, \forall N\geq 1,\\
& \quad \quad \mu_{hol}\big(\,u\in X^{0}_{hol}(\R^{2}): \|e^{-itH}u\|_{L_{[-\frac{\pi}4, \frac{\pi}4]}^p\W^{s,p}(\R^2)}>\lambda\,\big)\leq Ce^{-c\lambda^2},\\
& \quad \quad \mu_{hol}\big(\,u\in X^{0}_{hol}(\R^{2}): \|e^{-itH}u\|_{L^{8/3}([-\frac{\pi}4,\frac{\pi}4]\times \R^2)}>\lambda\,\big)\leq Ce^{-c\lambda^2}\,.
\end{split}
\end{equation}
\item In the radial case: for all $s<1/2$
\begin{multline}\label{b4}
\exists C>0,\exists c>0, \forall \lambda\geq 1, \forall N\geq 1,\\
\mu_{rad}\big(\,u\in X^{0}_{rad}(\R^{2}): \|e^{-itH}u\|_{L_{[-\frac{\pi}4,\frac{\pi}4]}^4\W^{s,4}(\R^2)}>\lambda\,\big)\leq Ce^{-c\lambda^2}\,.
\end{multline}
\end{enumerate}
\end{lemm}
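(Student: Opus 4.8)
The plan is to prove all four estimates by a single scheme that reduces each large-deviation bound to the convergence of an explicit deterministic series controlled by Lemma~\ref{lem.born}. Under $\mu_\star$ a typical function is the Gaussian series $u=\sum_n (\lambda_n^\star)^{-1/2} g_n \phi_n^\star$; since the $\phi_n^\star$ are eigenfunctions of $H$, the evolution $e^{-itH}$ and every spectral multiplier in the statement act diagonally, so that
\[
e^{-itH} M u=\sum_n \frac{m_n\, e^{-it\lambda_n^\star}}{\sqrt{\lambda_n^\star}}\, g_n\, \phi_n^\star ,
\]
where $M\in\{S_N,\ S_N-S_{N_0},\ H^{s/2}\}$ contributes the scalar factor $m_n$ (respectively $\chi(\lambda_n^\star/\lambda_N^\star)$, its difference, or $(\lambda_n^\star)^{s/2}$).

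The analytic core is the standard Gaussian large-deviation inequality: for independent complex Gaussians and $(c_n)\in\ell^2$ one has $\big\|\sum_n c_n g_n\big\|_{L^q(\Omega)}\leq C\sqrt q\,\big(\sum_n|c_n|^2\big)^{1/2}$ for all $q\geq2$. I would fix the target space-time norm, $L^r_{t,x}$ with $r\in\{4,8/3\}$ or $L^r_t\W^{s,r}_x$ (here $r=p$), pick $q\geq r$, and use Minkowski to move $L^q(\Omega)$ innermost. Since the phases $e^{-it\lambda_n^\star}$ are unimodular and the $g_n$ are independent, the pointwise Gaussian variance is $t$-independent and equal to $\sum_n |m_n|^2|\phi_n^\star(x)|^2/\lambda_n^\star$, so the bounded time interval only yields a constant. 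Matters reduce to estimating $\big\|\big(\sum_n |m_n|^2(\lambda_n^\star)^{-1}|\phi_n^\star|^2\big)^{1/2}\big\|_{L^r_x}$, and a further application of Minkowski (the triangle inequality in $L^{r/2}$, valid as $r\geq2$) bounds its square by $\sum_n |m_n|^2(\lambda_n^\star)^{-1}\|\phi_n^\star\|_{L^r}^2$.

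Each case is then a convergence check using \eqref{est1}--\eqref{est2}. For (i) and (ii) one has $r=4$ and the series $\sum_n (\lambda_n^\star)^{-1}\|\phi_n^\star\|_{L^4}^2$, which is $\approx\sum_n n^{-5/4}$ in the holomorphic case and $\approx\sum_n n^{-3/2}\ln^{1/2}n$ in the radial case, and hence converges; in (i) one has $|m_n|\leq1$, giving a bound uniform in $N$, while in (ii) the factor $m_n$ is supported in $\{n\gtrsim N_0\}$, so the same series truncated to high frequencies is $\lesssim N_0^{-1/4}$ (holomorphic) or $\lesssim N_0^{-1/2}\ln^{1/2}N_0$ (radial), producing the gain. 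For (iii) and (iv) one has $m_n=(\lambda_n^\star)^{s/2}$ and the series $\sum_n (\lambda_n^\star)^{s-1}\|\phi_n^\star\|_{L^r}^2$, which is $\approx\sum_n n^{\,s-3/2+1/r}$ (holomorphic) and $\approx\sum_n n^{\,s-3/2}\ln^{1/2}n$ (radial, $r=4$), convergent exactly when $s<1/2-1/r$, resp.\ $s<1/2$; the $L^{8/3}_{t,x}$ endpoint in (iii) is the case $s=0$, $r=8/3$, for which $\sum_n n^{-1}\|\phi_n^{hol}\|_{L^{8/3}}^2\approx\sum_n n^{-9/8}$ converges. Thus in every case the $L^q(\Omega)$ norm of the relevant space-time norm of $e^{-itH}Mu$ is bounded by $A\sqrt q$, with $A$ uniform in $N$ and $A^2\lesssim N_0^{-\beta}$ in (ii).

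It remains to pass from these moment bounds to the exponential tails. By Chebyshev, $\mu_\star(\,\|\cdot\|>\lambda\,)\leq(A\sqrt q/\lambda)^q$; optimizing in $q$ (taking $q\approx\lambda^2/(eA^2)$) gives $\mu_\star(\,\|\cdot\|>\lambda\,)\leq Ce^{-c\lambda^2/A^2}$, which is precisely \eqref{b1}--\eqref{b4}, with the gain $e^{-cN_0^\beta\lambda^2}$ in (ii) and $\beta=1/4$ admissible in both the holomorphic and radial settings. I expect the one point requiring genuine care to be the bookkeeping in (ii): one must verify, using $\chi\equiv1$ on $[-\tfrac12,\tfrac12]$ and $\lambda_n^\star\approx n$, that $\chi(\lambda_n^\star/\lambda_N^\star)-\chi(\lambda_n^\star/\lambda_{N_0}^\star)$ is supported in $\{n\gtrsim N_0\}$ \emph{uniformly in} $N\geq N_0$, since it is this support property that turns the convergent series into a genuinely decaying tail and hence yields the explicit power of $N_0$; all remaining steps are the routine summations above.
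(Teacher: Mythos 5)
Your proposal is correct and follows essentially the same route as the paper: the Khintchine/Gaussian moment bound, two applications of Minkowski (valid since $q\geq r\geq 2$) to reduce to the deterministic series $\sum_n |m_n|^2(\lambda_n^\star)^{-1}\|\phi_n^\star\|_{L^r}^2$ controlled by Lemma \ref{lem.born}, and Bienaym\'e--Tchebichev with the choice $q\approx \lambda^2$. You in fact supply details the paper leaves implicit, notably the support property of $\chi(\lambda_n^\star/\lambda_N^\star)-\chi(\lambda_n^\star/\lambda_{N_0}^\star)$ in $\{n\gtrsim N_0\}$ and the explicit exponents yielding the $N_0^{-\beta}$ gain in (ii), where the paper only asserts that ``a negative power of $N_0$ can be gained.''
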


\begin{proof}  We have that
\begin{multline*}
\mu_{\star}\big(\,u\in X^{0}_{\star}(\R^{2})\,:\, \|e^{-itH}S_Nu\|_{L^{4}([-\frac{\pi}4,\frac{\pi}4]\times \R^{2})}>\lambda\,\big)\\
={\bf p}\left( \Big\|\sum_{n=0}^{\infty}{e^{-it\lambda_{n}}\chi\bigl(\frac{\lambda_{n}}{\lambda_{N}}\bigr)\frac{g_{n}(\omega)}{\sqrt{\lambda_{n}}}}\varphi_n^\star(x)\Big\|_{L^{4}([-\frac{\pi}4,\frac{\pi}4]\times \R^{2})}>\lambda \right)
\end{multline*}
Set 
$$
F(\omega,t,x)\equiv\sum_{n=0}^{\infty}{e^{-it\lambda_{n}^\star}\chi\bigl(\frac{\lambda_{n}^\star}{\lambda_{N}^\star}\bigr)\frac{g_{n}(\omega)}{\sqrt{\lambda_{n}^\star}}}\phi^\star_n(x)\,.
$$
Let $q\geq p\geq 2$ and $s\geq 0$.
Recall here the  Khintchine inequality (see e.g. \cite[Lemma 3.1]{BT2} for a  proof): there exists $C>0$ such that for all real $k\geq 2$ and $(a_{n})\in \ell^{2}(\N)$
\begin{equation}\label{khin} 
\big\|\sum_{n\geq 0}g_{n}(\om)\,a_{n}\big\|_{L_{\bf p}^{k}}\leq C\sqrt{k}\Big(\sum _{n\geq 0}|a_{n}|^{2}\Big)^{\frac12},
\end{equation}
if the $g_n$ are iid normalized Gaussians.
Applying it to \eqref{khin} we get
\begin{equation*}
\|H^{s/2}F(\omega,t,x)\|_{L^q_\omega}\leq C\sqrt{q}\Big(\sum_{n=0}^{\infty}\chi^{2}
\bigl(\frac{\lambda_{n}^\star}{\lambda_{N}^\star}\bigr){\frac{ |\phi^\star_{n}(x)|^2}{\lambda^{\star\; 1-s}_{n}}}\Big)^{1/2}\leq C\sqrt{q}\Big(\sum_{n=0}^{\infty} 
 {\frac{ |\phi^\star_{n}(x)|^2}{\langle n\rangle^{1-s}}}\Big)^{1/2},
\end{equation*}
and using twice the Minkowski inequality for $q\geq p$ gives
\begin{equation}\label{born}
\|H^{s/2}F(\omega,t,x)\|_{L^q_{\omega}L^p_{t,x}}\leq \|H^{s/2}F(\omega,t,x)\|_{L^p_{t,x}L^q_\omega}\leq C\sqrt{q}\Big(\sum_{n=0}^{\infty} 
 {\frac{ \|\phi^\star_{n}(x)\|_{L^{p}(\R^{2})}^2}{\langle n\rangle^{1-s}}}\Big)^{1/2}.
\end{equation}

We are now ready to prove  \eqref{b1}. Set $p=4$ and $s=0$. Since  by Lemma \ref{lem.born} we have $\|\phi^\star_{n}\|_{L^{4}(\R^{2})}\leq C n^{-1/8}$, we get from \eqref{born} 
$$
\|F(\omega,t,x)\|_{L^q_{\omega}L^4_{t,x}}\leq C\sqrt{q}\,.
$$
The Bienaym\'e-Tchebichev inequality gives then
$$
{\bf p}\left( \|F(\omega,t,x)\|_{L^4_{t,x}}>\lambda\,\right)\leq 
(\lambda^{-1}\|F(\omega,t,x)\|_{L^q_{\omega}L^4_{t,x}})^q
\leq
(C\lambda^{-1}\sqrt{q})^{q}\,.
$$
Thus by choosing $q=\delta \lambda^2\geq 4$, for $\delta$ small enough, we get the bound 
$$
{\bf p}\big(\|F(\omega,t,x)\|_{L^4_{t,x}}>\lambda\,\big)\leq Ce^{-c\lambda^2}\,,
$$
which is \eqref{b1}. 

For the proof of \eqref{b2},  we analyze the function
$$
G(\omega,t,x)\equiv\sum_{n=0}^{\infty} e^{-it\lambda_{n}^\star}\left( \chi(\frac{\lambda_{n}^\star}{\lambda_{N}^\star})-
\chi(\frac{\lambda_{n}^\star}{\lambda_{N_{0}}^\star}) \right)\frac{g_{n}(\omega)}{\sqrt{\lambda_{n}^\star}} \phi^\star_n(x),
$$
and we use that a negative power of $N_0$ can be gained in the estimate. 
Namely, there is $\gamma>0$ such that
$$
\|F_{N_0}(\omega,t,x)\|_{L^q_{\omega}L^4_{t,x}}\leq C\sqrt{q}N_0^{-\gamma}\,,
$$
which implies \eqref{b2}. 

To prove \eqref{b3}-\eqref{b4}, we come back to \eqref{born} and argue similarly.
This completes the proof of Lemma~\ref{ld}.
\end{proof}

   \begin{lemm}
 Let $\beta\geq 0$. Let  $p\in [1,\infty[$, then when $N\longrightarrow +\infty$. 
\begin{equation*} 
C^{N}_{\b}\e^{-\beta \H_{N}(u)}\longrightarrow C_{\b}\e^{-\beta \H(u)}\quad \text{in}\quad L^{p}(\text{d}\mu_\star(u)).
\end{equation*}

 In particular,  for all measurable sets $A\subset X^{0}_{\star}(\R^{2})$, 
 \begin{equation*}
\rho^{N}_{\star}(A)\longrightarrow \rho_{\star}(A).
 \end{equation*}
  \end{lemm}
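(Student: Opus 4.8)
The plan is to reduce the statement to an application of the dominated convergence theorem, the crucial structural observation being that both densities are bounded by $1$. Recalling that $\H(u)=\mathcal E(u)$, one unfolds the definition of $\mathcal E$ to obtain
\begin{equation*}
\H(u)=2\pi\int_{-\frac\pi4}^{\frac\pi4}\int_{\R^2}|e^{-itH}u|^4\,dx\,dt=2\pi\|e^{-itH}u\|_{L^4([-\frac\pi4,\frac\pi4]\times\R^2)}^4\geq0,
\end{equation*}
and likewise $\H_N(u)=\H(S_Nu)=2\pi\|e^{-itH}S_Nu\|_{L^4([-\frac\pi4,\frac\pi4]\times\R^2)}^4\geq0$. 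Since $\beta\geq0$, it follows that $0\leq e^{-\beta\H_N(u)}\leq1$ and $0\leq e^{-\beta\H(u)}\leq1$ for every $u$, so the constant $1$ (which lies in $L^p(\dd\mu_\star)$, $\mu_\star$ being a probability measure) dominates both sequences uniformly.

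First I would establish the almost sure convergence $\H_N(u)\to\H(u)$, which is independent of the value of $\beta$. By \eqref{b1} the quantity $\|e^{-itH}S_Nu\|_{L^4([-\frac\pi4,\frac\pi4]\times\R^2)}$ is finite $\mu_\star$-a.s.; and by \eqref{b2} there is a fixed $\gamma>0$ (the decay exponent of that estimate, not to be confused with the inverse temperature $\beta$) such that, evaluated along the dyadic scale $N_0=2^j$ with $\lambda=2^{-j\gamma/4}$,
\begin{equation*}
\mu_\star\big(\,u:\|e^{-itH}(S_{2^{j+1}}-S_{2^j})u\|_{L^4([-\frac\pi4,\frac\pi4]\times\R^2)}>2^{-j\gamma/4}\,\big)\leq Ce^{-c2^{j\gamma/2}},
\end{equation*}
which is summable in $j$. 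A Borel-Cantelli argument, together with a union bound over the finitely many indices $2^j\leq N<2^{j+1}$ to pass from the dyadic subsequence to the full sequence, then shows that $(e^{-itH}S_Nu)_N$ is $\mu_\star$-a.s.\ Cauchy in $L^4([-\frac\pi4,\frac\pi4]\times\R^2)$, with limit $e^{-itH}u$. By continuity of the $L^4$ norm and of $x\mapsto e^{-\beta x}$, this gives $e^{-\beta\H_N(u)}\to e^{-\beta\H(u)}$ for $\mu_\star$-almost every $u$.

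Combining the almost sure convergence with the uniform domination by $1$, the dominated convergence theorem yields $e^{-\beta\H_N}\to e^{-\beta\H}$ in $L^p(\dd\mu_\star)$ for every $p\in[1,\infty)$. Taking $p=1$ in particular shows that the normalising constants converge, $(C^N_\beta)^{-1}=\int e^{-\beta\H_N}\,\dd\mu_\star\to\int e^{-\beta\H}\,\dd\mu_\star=(C_\beta)^{-1}$, the limit being strictly positive (so that $C_\beta<\infty$) because $\H(u)<\infty$ a.s.\ forces $e^{-\beta\H(u)}>0$ a.s. Writing
\begin{equation*}
C^N_\beta e^{-\beta\H_N}-C_\beta e^{-\beta\H}=C^N_\beta\big(e^{-\beta\H_N}-e^{-\beta\H}\big)+(C^N_\beta-C_\beta)e^{-\beta\H},
\end{equation*}
and using that $C^N_\beta$ is bounded, that the first bracket tends to $0$ in $L^p$, that $C^N_\beta-C_\beta\to0$, and that $e^{-\beta\H}\in L^p(\dd\mu_\star)$, both terms tend to $0$; this gives the claimed $L^p$ convergence of the densities. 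Finally, for any measurable $A\subset X^0_\star(\R^2)$,
\begin{equation*}
|\rho^N_\star(A)-\rho_\star(A)|\leq\int_A\big|C^N_\beta e^{-\beta\H_N}-C_\beta e^{-\beta\H}\big|\,\dd\mu_\star\leq\big\|C^N_\beta e^{-\beta\H_N}-C_\beta e^{-\beta\H}\big\|_{L^1(\dd\mu_\star)}\longrightarrow0,
\end{equation*}
which is the asserted convergence $\rho^N_\star(A)\to\rho_\star(A)$.

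The hard part is the almost sure convergence $\H_N(u)\to\H(u)$; everything afterwards is soft once the densities are known to be uniformly bounded by $1$. The real content there is upgrading the quantitative large-deviation estimate \eqref{b2} to almost sure convergence of the full sequence $e^{-itH}S_Nu$ in $L^4$, which is exactly what the dyadic Borel-Cantelli bookkeeping achieves; it is the gain of the positive power $N_0^{\gamma}$ in the exponent of \eqref{b2} that makes the relevant series summable.
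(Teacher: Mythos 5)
Your proof is correct, and it rests on the same two pillars as the paper's own argument: the uniform bound $0\le e^{-\beta\H_N(u)},e^{-\beta\H(u)}\le 1$ (valid since $\H(u)=2\pi\|e^{-itH}u\|_{L^4([-\frac\pi4,\frac\pi4]\times\R^2)}^4\ge 0$ and $\beta\ge0$) and the large-deviation estimate \eqref{b2}. Where you genuinely diverge is the measure-theoretic packaging. The paper extracts from \eqref{b2} only convergence \emph{in measure} $\H_N\to\H$ with respect to $\mu_\star$, and then concludes the $L^p$ convergence directly by splitting on the set $A_{N,\eps}=\{|e^{-\beta\H_N}-e^{-\beta\H}|\le\eps\}$ and its complement, i.e.\ a bounded-convergence-in-measure argument; you instead upgrade \eqref{b2} to $\mu_\star$-a.s.\ convergence of the full sequence via a dyadic Borel--Cantelli scheme and then apply dominated convergence. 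Your route proves strictly more than needed (a.s.\ convergence rather than convergence in measure), at the cost of one technical overreach: you invoke \eqref{b2} with $\lambda=2^{-j\gamma/4}<1$, whereas the lemma is stated only for $\lambda\ge1$. This is repairable rather than a gap: the proof of \eqref{b2} actually yields the moment bound $\|e^{-itH}(S_N-S_{N_0})u\|_{L^q_\omega L^4_{t,x}}\le C\sqrt{q}\,N_0^{-\gamma}$, and already Chebyshev at $q=2$ gives $\mu_\star\big(\|e^{-itH}(S_{2^{j+1}}-S_{2^j})u\|_{L^4}>2^{-j\gamma/4}\big)\lesssim 2^{-3j\gamma/2}$, which is summable, so your Borel--Cantelli bookkeeping goes through without the exponential refinement (note the paper's own deduction of convergence in measure at small thresholds from \eqref{b2} implicitly relies on the same underlying moment bound). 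Two small points you gloss but which are routine: the identification of the a.s.\ $L^4_{t,x}$ limit of $e^{-itH}S_Nu$ with $e^{-itH}u$ (via convergence in $\mathcal{C}_t\H^{-\sigma}$ and uniqueness of distributional limits), and the strict positivity of $\int e^{-\beta\H}\,d\mu_\star$, which the paper grounds in $\mathcal{E}(u)<+\infty$ $\mu_\star$-a.s.\ via Lemma \ref{lem.born}. Your treatment of the constants $C^N_\beta\to C_\beta$ and the final $L^1$ estimate for $|\rho^N_\star(A)-\rho_\star(A)|$ coincides with the paper's, the latter step being left implicit there.
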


\begin{proof} Denote by $ G^{N}_{\beta}(u)= \e^{-\beta \H_{N}(u)}$ and $ G_{\beta}(u)= \e^{-\beta \H(u)}$. By \eqref{b2}, we deduce that $ \H_{N}(u)  \longrightarrow\H(u)$ in measure, w.r.t. $\mu_\star$. In other words, for  $\eps>0$ and $N\geq 1$ we denote by
\begin{equation*}
A_{N,\eps}=\big\{  \,u\in X^{0}_{\star}(\R^{2})\;:\;|G^{N}_{\beta}(u)-G_{\beta}(u)| \leq\eps  \},
\end{equation*}
then $\mu_{\star}({A^{c}_{N,\eps}})\longrightarrow 0$, when $N\longrightarrow +\infty$. Since $0\leq G,G_{N}\leq 1$,
\begin{eqnarray*}
\|G_{\beta}-G^{N}_{\beta}\|_{L^{p}_{\mu_{\star}}}&\leq &\|(G_{\beta}-G^{N}_{\beta}){\bf 1}_{A_{N,\eps}}\|_{L^{p}_{\mu_{\star}}}+\|(G_{\beta}-G^{N}_{\beta}){\bf 1}_{{A^{c}_{N,\eps}}}\,\|_{L^{p}_{\mu_{\star}}}\\
&\leq & \eps \big(\,\mu_{\star}(A_{N,\eps}\,)\big)^{1/p}+2\big(\,\mu_{\star}({A^{c}_{N,\eps}})\,\big)^{1/p}\leq C\eps,
\end{eqnarray*}
for $N$ large enough. Finally, we have when $N\longrightarrow +\infty$
$$C^{N}_{\beta}=\big(\int\e^{-\beta \H_{N}(u)}d\mu_{\star}(u)\big)^{-1}\longrightarrow \big(\int\e^{-\beta \H(u)}d\mu_{\star}(u)\big)^{-1} =C_{\beta} ,$$
and this ends the proof.
\end{proof}
  
We look for a solution to~\eqref{CR} of the form $u=f+v$, thus $v$ has to satisfy
\begin{equation}\label{eqt1} 
\left\{
\begin{aligned}
&i\partial_{t}v=\T(f+v), \quad   (t,x)\in \R\times \R^2,\\
&v(0,x)=0,
\end{aligned}
\right.
\end{equation}
 with $\T(u)=\T(u,u,u)$. Similarly, we introduce   
    \begin{equation}\label{eqt2} 
\left\{
\begin{aligned}
&i\partial_{t}v_N=\T_N(f+v_N), \quad   (t,x)\in \R\times \R^2,\\
&v(0,x)=0.
\end{aligned}
\right.
\end{equation}

Recall that $X^{0}_{\star}(\R^2)=X^{0}_{hol}(\R^2)$ or $X^{0}_{rad}(\R^2)$. Define the sets, for $s<\frac{1}{2}$, 
\begin{equation*}
A_{rad}^s(D)=\big\{f\in X^{0}_{rad}(\R^2)\,:\;\; \Vert \e^{-itH}f\big\Vert_{L_{[-\frac{\pi}4,\frac{\pi}4]}^4\W^{s,4}(\R^2)}\leq D \big\},
\end{equation*}
and choosing $p(s)=\frac{4}{1-2s}$ so that $s< \frac12-\frac1p$,
\begin{equation*}
A_{hol}^s(D)=\big\{f\in X^{0}_{hol}(\R^2)\,:\;\;  \| {\e^{-it H}f}\|_{L^{8/3}_{ [-\frac{\pi}4,\frac{\pi}4] } L^{8/3}(\R^{2})}+\Vert \e^{-itH}f\big\Vert_{L_{[-\frac{\pi}4,\frac{\pi}4]}^{p(s)}\W^{s,p(s)}(\R^2)}\leq D \big\}.
\end{equation*}

In the sequel we write $A_{\star}(D)=A_{hol}(D)$ or $A_{rad}(D)$.
Then we have the following result

 \begin{lemm}
Let $\beta \geq 0$. There exist $c,C>0$ so that for all $N\geq 0$
\begin{equation*}
\rho^{N}_{\star}\big(  A_{\star}(D)^{c}  \big)\leq C\e^{-cD^{2}}, \quad \rho_{\star}\big(  A_{\star}(D)^{c}  \big)\leq C\e^{-cD^{2}},\quad \mu_{\star}\big(  A_{\star}(D)^{c}  \big)\leq C\e^{-cD^{2}}.
\end{equation*}
  \end{lemm}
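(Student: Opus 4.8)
The plan is to derive all three estimates from the single large-deviation input of Lemma~\ref{ld}, treating the unweighted measure $\mu_\star$ first and then transferring to the weighted measures $\rho_\star$ and $\rho^N_\star$ by exploiting the sign of the energy.

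First I would dispose of the $\mu_\star$ bound, which is essentially immediate. By definition, $A_\star(D)^c$ is exactly the event on which the relevant space-time norm of $\e^{-itH}f$ exceeds $D$: in the radial case this is the $L^4_{[-\pi/4,\pi/4]}\W^{s,4}$ norm, so \eqref{b4} gives $\mu_{rad}(A_{rad}(D)^c)\leq Ce^{-cD^2}$ directly. In the holomorphic case $A_{hol}(D)^c$ is the event that a sum of two norms exceeds $D$; splitting by a union bound into the events that each summand exceeds $D/2$ and applying the two lines of \eqref{b3} yields $\mu_{hol}(A_{hol}(D)^c)\leq Ce^{-cD^2}$ as well (with adjusted constants).

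Next I would transfer this to the weighted measures. The key observation is that the energy is nonnegative: with all four arguments equal, the formula for $\mathcal{E}$ gives $\mathcal{E}(u)=2\pi\|\e^{-itH}u\|_{L^4([-\pi/4,\pi/4]\times\R^2)}^4\geq 0$, and likewise $\H_N(u)=\mathcal{E}(S_Nu)=2\pi\|\e^{-itH}S_Nu\|_{L^4([-\pi/4,\pi/4]\times\R^2)}^4\geq 0$. Since $\beta\geq 0$, the Gibbs densities satisfy $\e^{-\beta\mathcal{E}(u)}\leq 1$ and $\e^{-\beta\H_N(u)}\leq 1$ pointwise, so integrating over $A_\star(D)^c$ gives $\rho^N_\star(A_\star(D)^c)\leq C^N_\beta\,\mu_\star(A_\star(D)^c)$ and $\rho_\star(A_\star(D)^c)\leq C_\beta\,\mu_\star(A_\star(D)^c)$. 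It therefore only remains to bound the normalisation constants uniformly in $N$.

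The main point, and the only place where some care is needed, is this uniform control of $C^N_\beta$. I would bound the partition function from below by restricting to the sublevel set $\{u:\H_N(u)\leq M\}$: on this set the integrand is at least $\e^{-\beta M}$, while \eqref{b1} (rewritten as a bound on $\mu_\star(\H_N>M)$ through the identity $\H_N(u)=2\pi\|\e^{-itH}S_Nu\|_{L^4}^4$) shows that $\mu_\star(\H_N\leq M)\geq \frac12$ once $M$ is chosen large but fixed, uniformly in $N$. Hence $\int \e^{-\beta\H_N}\,d\mu_\star\geq \tfrac12 \e^{-\beta M}$ and $C^N_\beta\leq 2\e^{\beta M}$ for all $N$; since the preceding lemma gives $C^N_\beta\to C_\beta$, the same bound holds for $C_\beta$. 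Combining with the $\mu_\star$ estimate and absorbing $2\e^{\beta M}$ into $C$ finishes all three inequalities. The whole argument is soft once one records that $\mathcal{E}$ and $\H_N$ are nonnegative; the only genuine content beyond Lemma~\ref{ld} is that the uniform large-deviation estimate \eqref{b1} keeps the partition functions bounded away from zero uniformly in $N$.
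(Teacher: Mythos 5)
Your proof is correct and follows essentially the same route as the paper, which simply notes that since $\beta\geq 0$ the densities of $\rho_\star$ and $\rho^N_\star$ with respect to $\mu_\star$ are bounded (uniformly in $N$), so all three bounds reduce to \eqref{b3} and \eqref{b4}. The only difference is that you spell out what the paper leaves implicit --- the nonnegativity of $\mathcal{E}$ and $\H_N$ via the identity $\mathcal{E}(u)=2\pi\|\e^{-itH}u\|_{L^4([-\frac{\pi}{4},\frac{\pi}{4}]\times\R^2)}^4$, and the uniform-in-$N$ bound on the normalising constants $C^N_\beta$ via \eqref{b1} (the paper instead gets this from the convergence $C^N_\beta\to C_\beta$ in the preceding lemma) --- which is a useful but not substantively different elaboration.
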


\begin{proof}
Since $\beta\geq 0$, we have $\rho^{N}_{\star}\big(  A_{\star}(D)^{c}  \big), \rho_{\star}\big(  A_{\star}(D)^{c}  \big) \leq C \mu_{\star}\big(  A_{\star}(D)^{c}  \big)$. The result is therefore given by  \eqref{b3} and \eqref{b4}.
\end{proof}

\begin{prop}\label{proplocal}
Let $s<1/2$. There exists $c>0$ such that, for any $D\geq 0$, setting ${\tau(D) =cD^{-2}}$, for any $f\in A(D)$ there exists a  unique solution $v\in L^{\infty}([-\tau,\tau]; L^2(\R^2))$ to the equation\;\eqref{eqt1} and a  unique solution $v_N\in L^{\infty}([-\tau,\tau]; L^2(\R^2))$ to the equation\;\eqref{eqt2} which furthermore satisfy 
\begin{equation*}
\Vert v \big\Vert_{L^{\infty}([-\tau,\tau]; \H^s(\R^2) )},\quad \Vert v_N \big\Vert_{L^{\infty}([-\tau,\tau]; \H^s(\R^2) )}\leq  D.
\end{equation*}
\end{prop}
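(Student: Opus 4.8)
The plan is to solve \eqref{eqt1} by a contraction mapping argument for the Duhamel operator, extracting from the time integration the factor that produces the scaling $\tau(D)=cD^{-2}$. Since \eqref{CR} carries no linear term, the harmonic propagator never acts on the unknown itself but only enters through the definition of $\T$; writing $u=f+v$ the integral formulation is $v=\Phi(v)$ with
\[
\Phi(v)(t)=-i\int_0^t \T\big(f+v(\sigma)\big)\,d\sigma,\qquad \T(w):=\T(w,w,w).
\]
I would run the fixed point in the complete metric space $B_\tau(D)=\{v\in\mathcal{C}([-\tau,\tau];\H^s):\|v\|_{L^\infty_\tau\H^s}\leq D\}$ equipped with the $L^\infty_\tau L^2$ distance, so that stability and contraction close in compatible norms. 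The solution $v_N$ of the truncated problem \eqref{eqt2} is produced by the very same scheme with $\T$ replaced by $\T_N=S_N\T(S_N\cdot,S_N\cdot,S_N\cdot)$: as $S_N=\chi(H/\lambda_N)$ is a function of $H$ it commutes with $\e^{-i\tau H}$ and is bounded on every $\W^{s,p}$, $1<p<\infty$, uniformly in $N$, so all the bounds below hold with constants independent of $N$.

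The crux is a single trilinear estimate. Starting from $\T(u_1,u_2,u_3)=2\pi\int_{-\pi/4}^{\pi/4}\e^{i\tau H}\big[(\e^{-i\tau H}u_1)(\e^{-i\tau H}u_2)(\ov{\e^{-i\tau H}u_3})\big]\,d\tau$ and using that $\e^{i\tau H}$ is an isometry on both $\H^s$ and $L^2$, one reduces to bounding the $\H^s$ (resp.\ $L^2$) norm of the pointwise product of the three free evolutions, integrated in $\tau\in[-\frac{\pi}4,\frac{\pi}4]$. I would estimate this via the fractional Leibniz rule together with \eqref{equiv}, distributing $H^{s/2}$ onto one factor and placing the remaining two in Lebesgue spaces, followed by Hölder in $\tau$. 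The decisive point is that the internal integration variable of $\T$ is exactly the variable over which the norms defining $A_\star(D)$ are taken: hence every factor equal to $f$ is controlled by $D$ whenever $f\in A_\star(D)$, while every factor equal to $v$ is controlled by $\|v\|_{\H^s}$ (resp.\ $\|v\|_{L^2}$) through the deterministic Strichartz inequality $\|\e^{-i\tau H}w\|_{L^p_{[-\frac{\pi}4,\frac{\pi}4]}\W^{s,p}}\lesssim\|w\|_{\H^s}$ for the harmonic oscillator away from its focal points.

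Granting this estimate, I would expand $\T(f+v)$ into its $2^3$ trilinear contributions in $f$ and $v$. The pure term $\T(f,f,f)$ is bounded in $\H^s$ by $\lesssim D^3$, and each mixed term by $\lesssim D^{3-j}\|v\|_{L^\infty_\tau\H^s}^{\,j}\lesssim D^3$ on $B_\tau(D)$; the time integration then supplies a factor $\tau$, giving $\|\Phi(v)\|_{L^\infty_\tau\H^s}\lesssim \tau D^3$. Choosing $\tau=cD^{-2}$ with $c$ small forces the right-hand side to be $\leq D$, so $\Phi$ preserves $B_\tau(D)$. The same multilinearity, now using the $L^2$ version of the trilinear estimate, yields $\|\Phi(v_1)-\Phi(v_2)\|_{L^\infty_\tau L^2}\lesssim \tau D^2\,\|v_1-v_2\|_{L^\infty_\tau L^2}$, a contraction for $c$ small. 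The Banach fixed point theorem then produces the unique $v$ (and, identically, $v_N$) in $B_\tau(D)$; uniqueness in the full class $L^\infty_\tau L^2$ follows unconditionally from the $L^2$ difference estimate by a standard Gronwall continuation.

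The main obstacle is precisely this trilinear estimate, and within it the bookkeeping of Hölder exponents. One must place the derivative $H^{s/2}$ on a factor whose surviving norm is simultaneously an admissible Strichartz norm for $\e^{-i\tau H}$ on $[-\frac{\pi}4,\frac{\pi}4]$ \emph{and} one of the norms defining $A_\star(D)$, and split the two undifferentiated factors so that the spatial exponents add to $\frac12$ and the temporal ones to $1$. The radial choice $L^4_\tau\W^{s,4}$ (the self-dual admissible pair) and the holomorphic choices $p(s)=\frac{4}{1-2s}$ and the auxiliary norm $L^{8/3}$ are dictated exactly by these two constraints, together with the Sobolev embeddings needed to absorb the undifferentiated factors; checking that the required endpoints stay admissible for every $s<\frac12$ is where the genuine work lies.
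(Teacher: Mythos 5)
Your overall architecture is the same as the paper's: a Duhamel fixed point for $K(v)=-i\int_0^t\T(f+v)\,ds$ in a ball of $\mathcal{C}([-\tau,\tau];\H^s)$, expansion of $\T(f+v)$ into its trilinear $f$/$v$ contributions, $f$-factors paid by the norms defining $A_\star(D)$, $v$-factors by Strichartz, and $\tau = cD^{-2}$ from the time integration. But your reduction of the key trilinear estimate has a genuine gap. By pushing the $\H^s$ norm through the isometry $\e^{i\tau H}$ and under the $\tau$-integral, you bound $\Vert \T(u_1,u_2,u_3)\Vert_{\H^s}$ by the $L^1_\tau\H^s$ norm of the triple product, so after the Leibniz rule your \emph{three} factors alone must carry spatial H\"older exponents summing to $\frac12$ (as you state explicitly). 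With the norms actually available, this bookkeeping cannot be arranged for small $s$. In the radial case the only control on $f$ is $\Vert \e^{-itH}f\Vert_{L^4_t\W^{s,4}}\leq D$; in the term $\T(f,f,f)$ the differentiated factor is then locked at spatial exponent $\frac14$, and by \eqref{equiv} each undifferentiated factor can be placed at best in $L^b$ with $\frac1b\geq \frac14-\frac{s}{2}$, so the total spatial exponent is at least $\frac34-s$, which exceeds $\frac12$ whenever $s<\frac14$. The analogous count in the holomorphic case (differentiated factor at $\frac1p=\frac{1-2s}{4}$, undifferentiated ones at best near $\frac14-s$) fails for $s$ below roughly $\frac1{10}$. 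Since the proposition is claimed, and used in Theorem \ref{thm1}, for \emph{all} $s<\frac12$, and since $f\in A^s_\star(D)$ for a small $s$ does not imply $f\in A^{s'}_\star(D)$ for a larger $s'$, this cannot be repaired by monotonicity in $s$.

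The missing device is the duality step of the paper's Lemma \ref{lemTri2}: one writes $\Vert\T(u_1,u_2,u_3)\Vert_{\H^s}=\sup_{\Vert u\Vert_{L^2}=1}\langle H^{s/2}\T(u_1,u_2,u_3),u\rangle$ and spends a Strichartz estimate on the test function $u$, which therefore carries an arbitrary admissible pair $(p_4,q_4)$. Your reduction is exactly the degenerate choice $(p_4,q_4)=(\infty,2)$; taking instead $(4,4)$ means the three input factors need only fill spatial exponent $\frac34$ (and temporal $\frac34$), which is precisely what lets the radial case run with $(4,4)$ for every factor and no embedding loss, and what makes the holomorphic bookkeeping with $L^{8/3+\delta}$ and $p(s)=\frac{4}{1-2s}$ close for every $s<\frac12$. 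Your secondary variations are harmless: contracting in the $L^\infty_\tau L^2$ metric (the paper contracts directly in the $\H^s$ norm) is a standard and valid alternative, and unconditional uniqueness in $L^\infty_\tau L^2$ does follow from the $L^2$-level trilinear bound --- though note that the bound $\Vert\T(u_1,u_2,u_3)\Vert_{L^2}\lesssim\prod\Vert u_i\Vert_{L^2}$ is itself most naturally obtained with a nontrivial dual pair, so the duality step is unavoidable there as well.
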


The key ingredient in the proof of this result, is the following trilinear estimate

  \begin{lemm} \ph \label{lemTri2} Assume that for $1 \leq j \leq 3$ and $1\leq k\leq 4$, $(p_{jk},q_{jk})\in [2,+\infty[^2$ are Strichartz admissible pairs, or in other words satisfy
\begin{equation*}
\frac1{q_{jk}}+\frac{1}{p_{jk}}=\frac{1}2,
\end{equation*}
and are such that for $1\leq j\leq 4$,
  \begin{equation*}
\frac{1}{p_{j1}}+  \frac{1}{p_{j2}}+\frac{1}{p_{j3}}+  \frac{1}{p_{j4}}=\frac{1}{q_{j1}}+  \frac{1}{q_{j2}}+\frac{1}{q_{j3}}+  \frac{1}{q_{j4}}=1.
  \end{equation*}
  Then for all $s\geq 0$,  there exists $C>0$ such that 
\begin{align*}
\Vert \T(u_1,u_2,u_3)\Vert_{\H^s(\R^2)}\leq &
C  \big\Vert \e^{-itH}u_1\big\Vert_{L^{p_{11}}\W^{s,q_{11}}} \big\Vert \e^{-itH}u_2\big\Vert_{L^{p_{12}}L^{q_{12}}} \big\Vert \e^{-itH}u_3\big\Vert_{L^{p_{13}}L^{q_{13}}}\\
& \quad +C  \big\Vert \e^{-itH}u_1\big\Vert_{L^{p_{21}}L^{q_{21}}} \big\Vert \e^{-itH}u_2\big\Vert_{L^{p_{22}}\W^{s,q_{22}}} \big\Vert \e^{-itH}u_3\big\Vert_{L^{p_{23}}L^{q_{23}}}\\
& \quad +C  \big\Vert \e^{-itH}u_1\big\Vert_{L^{p_{31}}L^{q_{31}}} \big\Vert \e^{-itH}u_2\big\Vert_{L^{p_{32}}L^{q_{32}}} \big\Vert \e^{-itH}u_3\big\Vert_{L^{p_{33}}\W^{s,q_{33}}},
\end{align*}
with the notation $L^p\W^{s,q}=L^p\big([-\frac{\pi}4,\frac{\pi}4 ];\W^{s,q}(\R^2)\big)$.
  \end{lemm}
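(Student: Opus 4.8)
The plan is to prove the estimate by \emph{duality}, exploiting the quadrilinear form that generates $\T$. Since $H^{s/2}$ is self-adjoint on $L^2(\R^2)$, one has
$\Vert \T(u_1,u_2,u_3)\Vert_{\H^s}=\sup\big\{|\langle \T(u_1,u_2,u_3),H^{s/2}g\rangle_{L^2}|:\Vert g\Vert_{L^2(\R^2)}\leq 1\big\}$.
First I would insert $u_4=H^{s/2}g$ into the representation
$\langle \T(u_1,u_2,u_3),u_4\rangle_{L^2}=2\pi\int_{-\frac{\pi}4}^{\frac{\pi}4}\int_{\R^2}(\e^{-itH}u_1)(\e^{-itH}u_2)\ov{(\e^{-itH}u_3)}\,\ov{(\e^{-itH}u_4)}\,dx\,dt$
recorded in the introduction. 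Writing $w_j=\e^{-itH}u_j$ for $j=1,2,3$ and $w_g=\e^{-itH}g$, the pairing becomes $2\pi\int_{-\frac{\pi}4}^{\frac{\pi}4}\int_{\R^2}w_1w_2\ov{w_3}\,\ov{\e^{-itH}H^{s/2}g}\,dx\,dt$.

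The next step transfers the $s$ derivatives off the dual factor and onto the product of the first three. Using that $H$ commutes with its propagator, $\e^{-itH}H^{s/2}g=H^{s/2}w_g$, and then the self-adjointness of $H^{s/2}$ gives, for each fixed $t$, the identity $\int_{\R^2}(w_1w_2\ov{w_3})\,\ov{H^{s/2}w_g}\,dx=\int_{\R^2}H^{s/2}(w_1w_2\ov{w_3})\,\ov{w_g}\,dx$. Hence it suffices to bound $\int_{-\frac{\pi}4}^{\frac{\pi}4}\int_{\R^2}\big|H^{s/2}(w_1w_2\ov{w_3})\big|\,|w_g|\,dx\,dt$ uniformly over $\Vert g\Vert_{L^2}\leq 1$.

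The heart of the matter is a \emph{fractional Leibniz rule} for the operator $H^{s/2}=(-\Delta+|x|^2)^{s/2}$, which is not a Fourier multiplier; this is the step I expect to be the main obstacle. I would resolve it with the norm equivalence \eqref{equiv}, which controls $\Vert F\Vert_{\W^{s,r}}$ by $\Vert(-\Delta)^{s/2}F\Vert_{L^r}+\Vert\langle x\rangle^s F\Vert_{L^r}$. For the weight piece there is nothing to distribute, since $\langle x\rangle^s$ is a pointwise multiplier and $\langle x\rangle^s|w_1w_2w_3|=(\langle x\rangle^s|w_1|)\,|w_2|\,|w_3|$ may be placed on whichever factor we please; for the differential piece the classical Kato--Ponce inequality applied twice produces exactly three terms, each carrying $(-\Delta)^{s/2}$ on a single factor. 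Collecting the two contributions, $\Vert w_1w_2\ov{w_3}\Vert_{\W^{s,r}(\R^2)}$ is dominated by a sum of three products in which one factor sits in $\W^{s,q_{jk}}$ and the other two in the corresponding $L^{q_{jk}}$ spaces.

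It then remains to close the estimate by Hölder and Strichartz. For the term where the derivative lands on the $j$-th factor ($j=1,2,3$), I would first peel off $w_g$ by Hölder in $x$, pairing $\ov{w_g}\in L^{q_{j4}}_x$ with the triple product in $L^{r}_x$, where $\frac1r=\frac1{q_{j1}}+\frac1{q_{j2}}+\frac1{q_{j3}}=1-\frac1{q_{j4}}$; then the fractional Leibniz step above distributes $H^{s/2}$ within the triple product, and a further Hölder in $t$ with $p_{j1},p_{j2},p_{j3},p_{j4}$ (reciprocals summing to $1$) bounds the integral by the product of the three desired space--time norms of $w_1,w_2,w_3$ times $\Vert w_g\Vert_{L^{p_{j4}}_tL^{q_{j4}}_x}$. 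Finally, since $(p_{j4},q_{j4})$ is a Strichartz admissible pair in dimension two and $w_g=\e^{-itH}g$, the local-in-time Strichartz estimate for the harmonic-oscillator propagator on $[-\frac{\pi}4,\frac{\pi}4]$ yields $\Vert w_g\Vert_{L^{p_{j4}}_tL^{q_{j4}}_x}\lesssim\Vert g\Vert_{L^2}\leq 1$, which removes the auxiliary factor and leaves precisely the asserted three-term bound. Taking the supremum over $g$ completes the argument.
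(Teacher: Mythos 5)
Your proposal is correct and follows essentially the same route as the paper: duality with $u_4=H^{s/2}g$, commuting $H^{s/2}$ with $\e^{-itH}$ and using self-adjointness to move the derivatives onto the triple product, H\"older in space--time, and the Strichartz estimate to absorb the dual factor $\Vert \e^{-itH}g\Vert_{L^{p_{j4}}L^{q_{j4}}}\lesssim \Vert g\Vert_{L^2}$. Your fractional Leibniz step via the equivalence \eqref{equiv} --- Kato--Ponce for the $(-\Delta)^{s/2}$ part plus the pointwise multiplier observation for $\langle x\rangle^s$ --- is exactly the content of the paper's product rule (Lemma \ref{lemmprod2}, cited from Taylor), which the paper likewise applies twice.
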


\begin{proof}
 By duality
 \begin{eqnarray*}
 \Vert \T(u_1,u_2,u_3)\Vert_{\H^s(\R^2)}&=& \sup_{\|u\|_{ L^{2}(\R^{2})}=1}\<  H^{s/2} \T(u_1,u_2,u_3),u   \>_{L^{2}(\R^{2})}\\
 &=& 2\pi   \sup_{\|u\|_{ L^{2}(\R^{2})}=1}\int_{-\frac{\pi}4}^{\frac{\pi}4}\int_{\R^2} H^{s/2}\Big( (\e^{-it H}u_1)   (\e^{-it H}u_2) (\ov{ \e^{-it H}u_3}) \Big)(\ov{\e^{-it H}u}) dx\, dt .
 \end{eqnarray*}
Then by Strichartz for all $u$ of unit norm in $L^2$ and  for any admissible pair $(p_{4},q_{4})$
  \begin{eqnarray*}
 \Vert \T(u_1,u_2,u_3)\Vert_{\H^s(\R^2)}&\leq &C \|      (\e^{-it H}u_1)   (\e^{-it H}u_2) (\ov{ \e^{-it H}u_3})\|_{L^{p'_{4}}\W^{s,q'_{4}}}   \|      \e^{-it H}u  \|_{L^{p_{4}}L^{q_{4}}}\\
 &\leq & C \|      (\e^{-it H}u_1)   (\e^{-it H}u_2) (\ov{ \e^{-it H}u_3})\|_{L^{p'_{4}}\W^{s,q'_{4}}}.
 \end{eqnarray*}
 We then conclude using \eqref{equiv} and  applying twice  the following lemma.
 \end{proof}

We have the following product rule

\begin{lemm}\label{lemmprod2}
 Let $s\geq 0$, then the following estimates hold
\begin{equation*} 
\|u\,v\|_{\W^{s,q}}\leq C \|u\|_{L^{q_{1}}}\|v\|_{\W^{s,{q'_{1}}}}+C \|v\|_{L^{q_{2}}}\|u\|_{\W^{s,{q'_{2}}}},
\end{equation*}
~\\[-5pt]
with $1<q<\infty$, $1< q_{1},\,q_{2}<   \infty$ and  $1\leq  {q'_{1}},\,{q'_{2}}<  \infty$  so that 
$$\frac1q=\frac1{q_{1}}+\frac1{{q'_{1}}}=\frac1{q_{2}}+\frac1{{q'_{2}}}.$$
\end{lemm}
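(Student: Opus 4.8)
The plan is to reduce the claim to a classical fractional Leibniz (Kato--Ponce) inequality by means of the norm equivalence \eqref{equiv}. First I would record that, for $1<q<\infty$ and $s\geq 0$,
\begin{equation*}
\|uv\|_{\W^{s,q}}\approx \|(-\Delta)^{s/2}(uv)\|_{L^q}+\|\langle x\rangle^{s}(uv)\|_{L^q},
\end{equation*}
and the analogous equivalences for the factors $u,v$, so that it suffices to bound the two pieces separately. Before doing so I would observe that the stated range of exponents is in fact slightly more constrained than it appears: although the hypotheses permit $q_1'=1$ (resp. $q_2'=1$), the relation $\frac1q=\frac1{q_1}+\frac1{q_1'}$ together with $q>1$ and $q_1>1$ forces $\frac1{q_1'}=\frac1q-\frac1{q_1}<1$, i.e. $q_1'>1$ (and likewise $q_2'>1$). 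Hence every exponent involved lies strictly inside $(1,\infty)$, and \eqref{equiv} may be applied to each of $u$, $v$, and $uv$.

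The weight piece requires only Hölder's inequality. Writing $\langle x\rangle^{s}(uv)=(\langle x\rangle^{s}u)\,v$ pointwise and using $\frac1q=\frac1{q_2'}+\frac1{q_2}$, I get
\begin{equation*}
\|\langle x\rangle^{s}(uv)\|_{L^q}\leq \|\langle x\rangle^{s}u\|_{L^{q_2'}}\|v\|_{L^{q_2}}\leq C\|u\|_{\W^{s,q_2'}}\|v\|_{L^{q_2}},
\end{equation*}
the last step again by \eqref{equiv}. Distributing the weight onto $v$ instead produces the other term on the right-hand side, so either choice is admissible.

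The derivative piece is the substantive point and is exactly the off-diagonal fractional Leibniz rule for the flat fractional Laplacian: for $1<q<\infty$ and $s\geq 0$,
\begin{equation*}
\|(-\Delta)^{s/2}(uv)\|_{L^q}\leq C\|u\|_{L^{q_1}}\|(-\Delta)^{s/2}v\|_{L^{q_1'}}+C\|v\|_{L^{q_2}}\|(-\Delta)^{s/2}u\|_{L^{q_2'}},
\end{equation*}
valid in the full range $1<q,q_1,q_2,q_1',q_2'<\infty$ subject to the two Hölder relations. Combining this with \eqref{equiv} applied to the factors $v$ (in $\W^{s,q_1'}$) and $u$ (in $\W^{s,q_2'}$), and adding the weight estimate above, yields the asserted bound. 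The only genuine obstacle is the invocation of this fractional Leibniz inequality in the stated off-diagonal range; this is the Kenig--Ponce--Vega / Kato--Ponce estimate, and once it is granted the rest of the argument is bookkeeping with Hölder's inequality and \eqref{equiv}.
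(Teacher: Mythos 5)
Your argument is correct and is essentially the paper's own proof: the paper disposes of this lemma in one line by citing the fractional Leibniz rule in the usual (flat) Sobolev spaces \cite[Proposition 1.1, p. 105]{Taylor} and invoking the norm equivalence \eqref{equiv}, which is precisely your reduction. Your explicit H\"older treatment of the weight term $\langle x\rangle^{s}(uv)$ and your check that the H\"older relations force $q_{1}',q_{2}'>1$ (so that \eqref{equiv} applies to every factor) merely spell out details the paper leaves implicit.
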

For the proof with the usual Sobolev spaces, we refer to    \cite[Proposition 1.1, p. 105]{Taylor}. The result in our context follows by using \eqref{equiv}.

    \begin{proof}[Proof of Proposition \ref{proplocal}]
We only consider the equation \eqref{eqt1}, the other case being similar by the boundedness of $S_N$ on $L^p(\R^2)$. For $s<1/2$, we define the space 
\begin{equation*}
Z^s(\tau)=\big\{v\in \mathcal{C}\big([-\tau,\tau]; \H^s(\R^{2})\big)\;\,\text{s.t.}\;\; v(0)=0 \;\text{ and }\; \|v\|_{Z^s(\tau)}\leq D\big\},
\end{equation*}
with $\|v\|_{Z^s(\tau)}=\|v\|_{L^{\infty}_{[-\tau,\tau]}\H^{s}(\R^{2})}$,  and for $f\in A_{\star}(D)$ we define the operator
\begin{equation*}
K(v)=-i \int_{0}^{t}\T(f+v)ds.
\end{equation*} 
We will show that $K$ has a unique fixed point  $v\in Z^s(\tau)$.

\bigskip

\noindent
\underline{The case of radial Hermite functions.}  By Lemma \ref{lemTri2} with $(p_{jk},q_{jk})=(4,4)$, we have for all $v\in Z^s(\tau)$
\begin{eqnarray}\label{kk}
\|K(v)\|_{Z^s(\tau)}&\leq & \tau \big\|\T(f+v)\big\|_{Z^s(\tau)}\nonumber\\
&\leq & C \tau \Big\| \| \e^{-is H}(f+v)(t)\|^{3}_{L^{4}_{s\in [-\frac{\pi}4,\frac{\pi}4] } \W^{s,4}(\R^{2})}\Big\|_{L^{\infty}_{t \in [-\tau,\tau]}}.
\end{eqnarray}
Next, by Strichartz and since $v \in Z^s(\tau)$
\begin{eqnarray*}
\big \| \e^{-is H}(f+v)(t)\big\|_{L^{4}_{s\in [-\frac{\pi}4,\frac{\pi}4] } \W^{s,4}(\R^{2})}&\leq&  \| \e^{-is H}f\|_{L^{4}_{s\in [-\frac{\pi}4,\frac{\pi}4] } \W^{s,4}(\R^{2})}+ \| \e^{-is H}v(t)\|_{L^{4}_{s\in [-\frac{\pi}4,\frac{\pi}4] } \W^{s,4}(\R^{2})}\\
 &\leq& C \big( D + \| v(t)\|_{ \H^{s}({\R^{2}})} \big) \leq 2CD.
\end{eqnarray*}
Therefore, from \eqref{kk} we deduce 
\begin{equation*}
\|K(v)\|_{Z^s(\tau)}\leq C \tau D^{3},
\end{equation*}
which implies that $K$ maps $Z^s(\tau)$ into itself when $\tau \leq cD^{-2}$, for $c>0$ small enough.

Similarly, for $v_{1},v_{2}\in Z^s(\tau)$, we have the bound
\begin{equation}\label{ck}
\|K(v_{2})-K(v_{1})\|_{Z^s(\tau)}\leq C \tau D^{2}\|v_{2}-v_{1}\|_{Z^s(\tau)},
\end{equation}
which shows that if $\tau \leq cD^{-2}$, $K$ is a contraction of $Z^s(\tau)$. The Picard fixed point theorem gives the desired result.

\bigskip

\noindent
\underline{The case of holomorphic Hermite functions.} For $s<\frac 12$, recall that we set $p=p(s)=\frac{4}{1-2s}$ so that $s< \frac12-\frac1p$.
\begin{eqnarray*} 
\|K(v)\|_{Z^s(\tau)}&\leq & \tau \big\|\T(f+v)\big\|_{Z^s(\tau)}\nonumber\\
&\leq & C \tau \left( \left\| \mathcal{T}(f,f,f)\right\|_{Z^s} +  \left\| \mathcal{T}(f,f,v)\right\|_{Z^s} +  \left\| \mathcal{T}(f,v,v)\right\|_{Z^s} +  \left\| \mathcal{T}(v,v,v)\right\|_{Z^s} \right).
\end{eqnarray*}
We estimate each term thanks to Lemma \ref{lemTri2} and Strichartz. The conjugation plays no role, so we forget it.\\
\indent Estimate of the trilinear term in $v$:
\begin{equation*}
\left\| \mathcal{T}(v,v,v)\right\|_{\mathcal{H}^s} \leq C  \| {\e^{-it' H}v}\|^3_{L^{4}_{t'\in [-\frac{\pi}4,\frac{\pi}4] } \W^{s,4}(\R^{2})}\leq C  \| v\|^3_{\H^{s}(\R^{2})}.
\end{equation*}
\indent Estimate of the quadratic term in $v$: for $\delta>0$ such that $\frac{2}{\frac{8}{3} + \delta} + \frac 1p + \frac 14 = 1$,
\begin{align*}
\left\| \mathcal{T}(v,v,f)\right\|_{\mathcal{H}^s} & \leq  C  \| {\e^{-it' H}v}\|^2_{L^{8/3+\delta}_{t'\in [-\frac{\pi}4,\frac{\pi}4] } L^{8/3+\delta}(\R^{2})} \| {\e^{-it' H}f}\|_{L^{p}_{t'\in [-\frac{\pi}4,\frac{\pi}4] } \W^{s,p}(\R^{2})}\\
&\qquad  + \| {\e^{-it' H}v}\|_{L^{4}_{t'\in [-\frac{\pi}4,\frac{\pi}4] } \W^{s,4}(\R^{2})}\| {\e^{-it' H}v}\|_{L^{4}_{t'\in [-\frac{\pi}4,\frac{\pi}4] } L^{4}(\R^{2})} \| {\e^{-it' H}f}\|_{L^{4}_{t'\in [-\frac{\pi}4,\frac{\pi}4] } L^{4}(\R^{2})}\\
&\leq C D \| v\|^2_{\H^{s}(\R^{2})}.
\end{align*}

Estimate of the linear term in $v$: with the same $\delta$ as above,
\begin{align*}
\left\| \mathcal{T}(v,f,f)\right\|_{\mathcal{H}^s} & \leq  C  \| {\e^{-it' H}v}\|_{L^{8/3+}_{t'\in [-\frac{\pi}4,\frac{\pi}4] } L^{8/3+}(\R^{2})} \| {\e^{-it' H}f}\|_{L^{8/3+}_{t'\in [-\frac{\pi}4,\frac{\pi}4] } L^{8/3+}(\R^{2})} \| {\e^{-it' H}f}\|_{L^{p}_{t'\in [-\frac{\pi}4,\frac{\pi}4] } \W^{s,p}(\R^{2})}\\
& \qquad + \| {\e^{-it' H}v}\|_{L^{4}_{t'\in [-\frac{\pi}4,\frac{\pi}4] } \W^{s,4}(\R^{2})}  \| {\e^{-it' H}f}\|^2_{L^{4}_{t'\in [-\frac{\pi}4,\frac{\pi}4] } L^{4}(\R^{2})}\\
& \leq C D^2 \| v\|_{\H^{s}(\R^{2})}.
\end{align*}

Estimate of the constant term in $v$:
\begin{align*}
\left\| \mathcal{T}(v,f,f)\right\|_{\mathcal{H}^s}
 \leq C  \| {\e^{-it' H}f}\|^2_{L^{8/3+}_{t'\in [-\frac{\pi}4,\frac{\pi}4] } L^{8/3+}(\R^{2})} \| {\e^{-it' H}f}\|_{L^{p}_{t'\in [-\frac{\pi}4,\frac{\pi}4] } \W^{s,p}(\R^{2})}
\leq C D^3 .
\end{align*}
With these estimates at hand, the result follows by the Picard fixed point theorem.
      \end{proof}
      
\subsubsection{ \bf Approximation and invariance of the measure}

\begin{lemm}
Fix $D\geq 0$ and $s<1/2$. Then for all $\eps>0$, there exists $N_0\geq 0$ such that for all $f\in A_{\star}(D)$ and $N\geq N_0$
\begin{equation*}
\big \Vert \Phi(t)f- \Phi_N(t)f \big\Vert_{L^{\infty}([-\tau_1,\tau_1]; \H^s(\R^2) )}\leq \eps,
\end{equation*}
where  $\tau_1=cD^{-2}$ for some  $c>0$.
\end{lemm}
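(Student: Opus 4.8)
The plan is to reduce the statement to an estimate on the difference $w:=\Phi(t)f-\Phi_N(t)f=v-v_N$, where $v,v_N$ solve \eqref{eqt1} and \eqref{eqt2} with the same datum $f\in A_\star(D)$. By Proposition \ref{proplocal} (taking $\tau_1=cD^{-2}$ with $c$ possibly smaller than there) both $v$ and $v_N$ satisfy $\|v\|_{Z^s(\tau_1)},\|v_N\|_{Z^s(\tau_1)}\le D$. Since $w(0)=0$, Duhamel gives $w(t)=-i\int_0^t\big[\T(f+v)-\T_N(f+v_N)\big]\,ds$, and I split the integrand as
\[\T(f+v)-\T_N(f+v_N)=\big[\T(f+v)-\T(f+v_N)\big]+\big[\T(f+v_N)-\T_N(f+v_N)\big],\]
a Lipschitz part and an approximation-error part.

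For the Lipschitz part, multilinearity of $\T$ together with Lemma \ref{lemTri2} and Strichartz yields, exactly as for the contraction estimate \eqref{ck} in the proof of Proposition \ref{proplocal},
\[\Big\|\int_0^{\cdot}\big[\T(f+v)-\T(f+v_N)\big]\,ds\Big\|_{Z^s(\tau_1)}\le C\tau_1 D^2\,\|w\|_{Z^s(\tau_1)},\]
the two ``clean'' slots being bounded by $D$ using $f\in A_\star(D)$ and $\|v\|,\|v_N\|\le D$. For $\tau_1=cD^{-2}$ with $c$ small this is $\le\frac12\|w\|_{Z^s(\tau_1)}$ and can be absorbed on the left-hand side.

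It then remains to make the error part small, uniformly in $f$. Writing $g:=f+v_N$ and $\T_N(g)=S_N\T(S_Ng,S_Ng,S_Ng)$, I telescope
\[\T(g)-\T_N(g)=(I-S_N)\T(g,g,g)+S_N\big[\T(g,g,g)-\T(S_Ng,S_Ng,S_Ng)\big],\]
and expand the bracket by multilinearity into three terms, each carrying one factor $(I-S_N)g$. Since $S_N$ is bounded on $\H^s$, the whole error is controlled by $\|(I-S_N)\T(g,g,g)\|_{\H^s}$ plus three trilinear terms of the form $\|\T(\cdots,(I-S_N)g,\cdots)\|_{\H^s}$. For the latter I apply Lemma \ref{lemTri2} with the factor $(I-S_N)g$ placed in a derivative-free slot, and extract a gain from a Bernstein/spectral-multiplier bound for the smooth cutoff,
\[\|\e^{-itH}(I-S_N)g\|_{L^pL^q}\lesssim \lambda_N^{-\delta}\,\|\e^{-itH}g\|_{L^p\W^{2\delta,q}},\]
whose right-hand side is $\lesssim D$ by $f\in A_\star(D)$ and the Strichartz bound $\|\e^{-itH}v_N\|_{L^p\W^{2\delta,q}}\lesssim\|v_N\|_{\H^{2\delta}}\le D$ (valid for $2\delta\le s$). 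Because $A_\star(D)$ controls a norm at the strictly positive regularity $s\in(0,\tfrac12)$, there is $\delta>0$ of slack to spend, and each such term is $\lesssim\lambda_N^{-\delta}D^3$, uniformly over $f\in A_\star(D)$.

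The genuinely delicate point, and the main obstacle, is the high-frequency output term $(I-S_N)\T(g,g,g)$: one cannot simply raise the target regularity to $\H^{s+\delta}$, since $A_\star(D)$ controls $g$ only at regularity $s$. This is resolved by exploiting the smoothing of $\T$: in the holomorphic case the coefficients $\alpha_{n_1,n_2,n_3,n_4}$ of \eqref{tfi} decay like $(n_1+n_2)^{-1/2}$, so $\T$ gains a derivative and $\|(I-S_N)\T(g,g,g)\|_{\H^s}\lesssim\lambda_N^{-\delta}\|\T(g,g,g)\|_{\H^{s+\delta}}$ is controlled for $\delta$ small; in the radial case the analogous gain is obtained from the dispersive bounds of Lemma \ref{lem.born} inside Lemma \ref{lemTri2}. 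In all cases the output term is again $\lesssim\lambda_N^{-\delta}D^3$, uniformly in $f$. Collecting the three contributions on $[-\tau_1,\tau_1]$ gives $\|w\|_{Z^s(\tau_1)}\le\frac12\|w\|_{Z^s(\tau_1)}+C\tau_1\lambda_N^{-\delta}D^3$, hence $\|w\|_{Z^s(\tau_1)}\le 2Cc\,\lambda_N^{-\delta}D$; given $\eps>0$ it suffices to pick $N_0$ with $2Cc\,\lambda_{N_0}^{-\delta}D\le\eps$, which is uniform over $f\in A_\star(D)$ and completes the proof.
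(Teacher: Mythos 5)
Your skeleton --- Duhamel for $w=v-v_N$, absorption of the Lipschitz part via the contraction estimate \eqref{ck} with smallness $C\tau_1 D^2\leq \frac12$, and isolation of an approximation error carrying either $(1-S_N)$ on the output or one factor $(1-S_N)g$ --- is exactly the paper's, and your explicit telescoping of the inner cutoffs in $\T_N(g)=S_N\T(S_Ng,S_Ng,S_Ng)$ is if anything more careful than the paper's displayed identity. The genuine gap is your treatment of the high-frequency output term $(1-S_N)\T(g,g,g)$, which you rightly identify as the crux but then resolve by an asserted smoothing property of $\T$ that is neither proved by you nor available in the paper. In the holomorphic case the coefficient bound $\alpha_{n_1,n_2,n_3,n_4}\lesssim (1+n_1+n_2)^{-1/2}$ is indeed true, but coefficient decay alone does not produce a norm inequality of the form $\Vert\T(u_1,u_2,u_3)\Vert_{\H^{s+\delta}}\lesssim{}$(products of the Strichartz norms controlled by $A_{hol}(D)$); extracting that from \eqref{tfi} is a nontrivial multilinear task not contained in Lemma \ref{lemTri2}, and your sentence claiming it is circular (the inequality $\Vert(1-S_N)w\Vert_{\H^s}\lesssim \lambda_N^{-\delta}\Vert w\Vert_{\H^{s+\delta}}$ is mere spectral localization, valid for any $w$; the open point is precisely the $\H^{s+\delta}$ bound on $\T(g,g,g)$). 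In the radial case the step is unsupported: there is no analogue of \eqref{tfi}, and ``Lemma \ref{lem.born} inside Lemma \ref{lemTri2}'' yields bounds on individual Hermite functions, not a derivative gain for $\T$ on general radial data. As written, the key uniform smallness is asserted, not proved.

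Moreover, the obstruction that drove you to this detour is illusory, and the paper's resolution is much simpler. The paper picks $\eta>0$ with $s+\eta<1/2$, writes $\Vert(1-S_N)\T(f+v)\Vert_{\H^{s}}\lesssim \lambda_N^{-\eta/2}\Vert\T(f+v)\Vert_{\H^{s+\eta}}$, and bounds $\Vert\T(f+v)\Vert_{L^\infty_{[-\tau,\tau]}\H^{s+\eta}}\leq CD^3$ by rerunning the estimates of Proposition \ref{proplocal} at the exponent $s+\eta$, still admissible because $s+\eta<1/2$. Your objection that $A_{\star}(D)$ ``controls $g$ only at regularity $s$'' misreads the role of $s$ in the lemma: the target regularity has slack below $1/2$, and since negative powers of $H$ are bounded on $L^p$ one has $A_{\star}^{s'}(D)\subset A_{\star}^{s}(CD)$ for $s\leq s'$, so membership in the $A$-set at level $s+\eta<1/2$ (which is how these sets are used in the global construction) furnishes all the Strichartz control needed to close the fixed point, hence the trilinear bound, at level $s+\eta$ --- no smoothing of $\T$ is required, and the same higher-regularity gain also absorbs your terms with a factor $(1-S_N)g$, making the multiplier estimate you invoke unnecessary. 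With that replacement your argument closes and coincides with the paper's proof.
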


\begin{proof}
Denoting for simplicity $\mathcal{T}(f) = \mathcal{T}(f,f,f)$,
\begin{equation*}
v-v_N=-i\int_0^{t}\big[S_N\big(\T(f+v)-\T(f+v_N)\big)+(1-S_N)\T(f+v) \big]ds.
\end{equation*}
As in \eqref{ck} we get 
\begin{equation*}
\|v-v_{N}\|_{Z^s(\tau)}\leq C \tau D^{2}\|v-v_{N}\|_{Z^s(\tau)}+\int_{-\tau}^{\tau}\|(1-S_N)\T(f+v) \|_{\H^s(\R^2)}ds ,
\end{equation*}
which in turn implies when $C \tau D^{2}\leq 1/2$
\begin{equation*}
\|v-v_{N}\|_{Z^s(\tau)}\leq 2\int_{-\tau}^{\tau}\|(1-S_N)\T(f+v) \|_{\H^s(\R^2)}ds.
\end{equation*}
Let $\eta>0$ so that $s+\eta<1/2$. Then by the proof of Proposition \ref{proplocal}, $\|\T(f+v) \|_{L_{[-\tau,\tau]}^{\infty}{\H^{s+\eta}(\R^2)}} \leq CD^3$ if $\tau \leq c_0 D^{-2}$ and therefore there exists $N_{0}=N_{0}(\eps,D)$ which satisfies the claim.
\end{proof}

In the next result, we summarize the results obtained by Suzzoni in \cite[Sections 3.3 and 4]{Suzzoni}. Since the proofs are very similar in our context, we skip them.

Let $D_{i,j}=(i+j^{1/2})^{1/2}$, with $i,j\in \N$ and set $T_{i,j}=\sum_{\ell=1}^j \tau_1(D_{i,\ell})$. Let 
\begin{equation*}
\Sigma_{N,i}:=\big\{f\,: \forall j\in \N, \;\; \Phi_N(\pm T_{i,j})f \in A(D_{i,j+1})\big\},
\end{equation*}
 and 
 \begin{equation*}
 \Sigma_i:=\limsup_{N\to +\infty} \Sigma_{N,i}, \qquad \Sigma:=\bigcup_{i\in \N} \Sigma_i.
 \end{equation*}
 
\begin{prop}
Let $\beta\geq 0$, then 
\begin{enumerate}[(i)] 
\item The set $\Sigma$ is of full $\rho_{\star}$ measure.
\item For all $f\in \Sigma$, there exists a unique global solution $u=f+v$ to~\eqref{CR}, and for all $t\in \R$, $u(t)\in \Sigma$. This define a global flow $\Phi$ on $\Sigma$
\item For all measurable set $A\subset \Sigma$, and all $t\in \R$, 
\begin{equation*}
\rho_\star(A)=\rho_\star(\Phi(t)(A)).
\end{equation*}
\end{enumerate}
\end{prop}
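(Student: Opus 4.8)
The plan is to run Bourgain's globalization scheme, taking the finite-dimensional flows $\Phi_N$ and their invariant measures $\rho^N_\star$ as the only quantitative inputs, and transferring everything to the limit $(\Phi,\rho_\star)$ via the approximation lemma and the $L^1(\dd\mu_\star)$ convergence of densities $C^N_\b\e^{-\b\H_N}\to C_\b\e^{-\b\mathcal{E}}$. The first step is a uniform-in-$N$ estimate for $\Sigma_{N,i}$. Because $\rho^N_\star$ is invariant under $\Phi_N$ and the cut-off set $A(D)$ is built from the explicit linear evolution $\e^{-itH}f$ (hence is $N$-independent), invariance and the tail bound give, for each $j$,
\[
\rho^N_\star\big(\{f:\Phi_N(\pm T_{i,j})f\notin A(D_{i,j+1})\}\big)=\rho^N_\star\big(A(D_{i,j+1})^c\big)\leq C\e^{-cD_{i,j+1}^2}.
\]
Since $D_{i,j+1}^2=i+(j+1)^{1/2}$, summing the two one-sided contributions over $j\in\N$ and using $\sum_j\e^{-c(j+1)^{1/2}}<\infty$ yields
\[
\rho^N_\star(\Sigma_{N,i}^c)\leq C\e^{-ci}\qquad\text{uniformly in }N.
\]

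Next I would pass to $\rho_\star$. Writing $\dd\rho^N_\star=C^N_\b\e^{-\b\H_N}\dd\mu_\star$ and $\dd\rho_\star=C_\b\e^{-\b\mathcal{E}}\dd\mu_\star$ with densities in $[0,C]$ converging in $L^1(\dd\mu_\star)$, the bound above upgrades to $\rho_\star(\Sigma_{N,i}^c)\leq C\e^{-ci}+\eps_N$ with $\eps_N\to0$, by controlling the difference of the two integrals by the $L^1$ distance of the densities. Since $\Sigma_i=\limsup_N\Sigma_{N,i}$, equivalently $\Sigma_i^c=\liminf_N\Sigma_{N,i}^c$, Fatou's lemma for the fixed probability measure $\rho_\star$ gives $\rho_\star(\Sigma_i^c)\leq\liminf_N\rho_\star(\Sigma_{N,i}^c)\leq C\e^{-ci}$, whence $\rho_\star(\Sigma_i)\geq1-C\e^{-ci}$. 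Letting $i\to\infty$ in $\Sigma=\bigcup_i\Sigma_i$ proves (i).

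For (ii), fix $f\in\Sigma_i$; by definition of the $\limsup$ there is a subsequence $N_k\to\infty$ with $\Phi_{N_k}(\pm T_{i,j})f\in A(D_{i,j+1})$ for all $j$. Starting from $f\in A(D_{i,1})$, Proposition \ref{proplocal} produces a solution on $[-\tau_1(D_{i,1}),\tau_1(D_{i,1})]$ with $\H^s$-norm $\leq D_{i,1}$; the approximation lemma then transfers the control $\Phi_{N_k}(T_{i,1})f\in A(D_{i,2})$ to the limit $\Phi(T_{i,1})f$ (after the harmless enlargement of radius absorbed by the slack in $D_{i,j}$), so the solution restarts on the next interval of length $\tau_1(D_{i,2})$, and so on for both time directions. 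Because $T_{i,j}=c\sum_{\ell=1}^j(i+\ell^{1/2})^{-1}\to+\infty$, the iteration exhausts $\R$ and yields a unique global $u=f+v$ with $v\in\mathcal{C}(\R;\H^s)$, uniqueness being the local uniqueness of the fixed point. That $\Phi(t)f\in\Sigma$ follows from the time-translation covariance of the scheme: for $g=\Phi(t_0)f$ the evolution $\Phi(\cdot)g$ is a shift of $\Phi(\cdot)f$, so after enlarging $i$ the budget conditions defining some $\Sigma_{i'}$ hold for $g$.

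Finally, for (iii) I would prove the equivalent assertion $\int F\circ\Phi(t)\,\dd\rho_\star=\int F\,\dd\rho_\star$ for every bounded continuous $F$, whence the claim for arbitrary measurable $A\subset\Sigma$ by regularity of $\rho_\star$. Here $\Phi_N$-invariance of $\rho^N_\star$ reads
\[
\int F(\Phi_N(t)f)\,\dd\rho^N_\star(f)=\int F(f)\,\dd\rho^N_\star(f).
\]
On the left, $F(\Phi_N(t)f)\,C^N_\b\e^{-\b\H_N(f)}\to F(\Phi(t)f)\,C_\b\e^{-\b\mathcal{E}(f)}$ for $\mu_\star$-a.e. $f$ (approximation lemma together with $\H_N\to\mathcal{E}$ in $\mu_\star$-measure) and is dominated by $C\|F\|_{L^\infty}$, so dominated convergence identifies the limit with $\int F\circ\Phi(t)\,\dd\rho_\star$; on the right, density convergence gives $\int F\,\dd\rho^N_\star\to\int F\,\dd\rho_\star$. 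I expect the main obstacle to be exactly this coupled passage to the limit, in which the dynamics $\Phi_N\to\Phi$ and the measures $\rho^N_\star\to\rho_\star$ must be handled simultaneously: the approximation lemma is only quantitative on the bounded sets $A(D)$ and for $N\geq N_0(\eps,D)$, so every limit must be localized to $A(D)$, with the complementary tails of mass $\leq C\e^{-cD^2}$ discarded and $D\to\infty$ taken last. The flow-invariance of $\Sigma$ in (ii) rests on the same delicate interplay between time translation and the finite-$N$ approximation.
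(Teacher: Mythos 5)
Your proposal follows essentially the same route as the paper: the paper in fact omits the proof of this proposition, citing de Suzzoni's implementation of Bourgain's invariant-measure globalization scheme, and your reconstruction --- the uniform tail bound $\rho^{N}_{\star}(\Sigma_{N,i}^{c})\leq C\e^{-ci}$ from $\Phi_N$-invariance of $\rho^{N}_{\star}$ and the bound on $A_\star(D)^c$, transfer to $\rho_\star$ via $L^{1}(\dd\mu_\star)$ convergence of the densities, Fatou on the $\limsup$ sets, iterated local well-posedness combined with the approximation lemma for globalization, and the coupled limit $\Phi_N\to\Phi$, $\rho^{N}_{\star}\to\rho_\star$ for invariance --- is exactly that scheme. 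You also correctly flag the one genuinely delicate point: since membership in $\Sigma_i=\limsup_N\Sigma_{N,i}$ only furnishes $f$-dependent subsequences, the dominated-convergence argument in (iii) cannot run naively along the full sequence, and must instead be localized to the sets $A(D)$ (where, for fixed $t$, only finitely many iteration steps of the approximation lemma are needed, with $N\geq N_{0}(\eps,D)$ uniform over the set), discarding tails of mass $C\e^{-cD^{2}}$ and letting $D\to\infty$ last.
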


%%%%%%%%%%%%%%%%%%%%%%%%%%%%%%%%%%%%%%%%%%%%%%%%%%%%%%%%%%%%%%%%%%%%%%%%%%
%%%%%%%%%%%%%%%%%%%%%%%%%%%%%%%%%%%%%%%%%%%%%%%%%%%%%%%%%%%%%%%%%%%%%%%%%%
%%%%%%%%%%%%%%%%%%%%%%%%%%%%%%%%%%%%%%%%%%%%%%%%%%%%%%%%%%%%%%%%%%%%%%%%%%
 \section{Weak solutions: proof of Theorem  \texorpdfstring{\ref{thm2}  }{Theorem 2.6}}\label{sect4}
 
%%%%%%%%%%%%%%%%%%%%%%%
%%%%%%%%%%%%%%%%%%%%%%%
%%%%%%%%%%%%%%%%%%%%%%% 
  \subsection{\bf Definition of \texorpdfstring{$ \boldsymbol \T(u,u,u)$}{\T(u,u,u)}  on the support of  \texorpdfstring{$\mu$}{mu} } For $N\geq 0$, denote by $\Pi_{N}$ the orthogonal projector on the space $\bigoplus_{k=0}^{N} E_{k}$ (in this section, we do not need  the smooth cut-offs $S_{N}$).
 In the sequel, we denote by ${\T(u)=\T(u,u,u)}$ and $\T_N(u)=\Pi_N \T(\Pi_N u,\Pi_N u,\Pi_N u)$
 \begin{prop}\ph\label{Prop.cauchy}
For all $p\geq 2$ and $\s>1$, the sequence $\big(\T_{N}(u)\big)_{N\geq1}$ is  a Cauchy  sequence in $L^{p}\big(X^{-1}(\R^2),\mathcal{B},d\mu; \H^{-\s}(\R^2)\big)$. Namely, for all $p\geq	 2$, there exist $\delta>0$ and $C>0$ so that for all $1\leq M<N$,
 \begin{equation*}
\int_{X^{-1}(\R^2)}\|\T_{N}(u)-\T_{M}(u)\|^{p}_{\H^{-\s}(\R^2)}\text{d}\mu(u)\leq C M^{-\delta}.
\end{equation*}
We denote by $\T(u)=\T(u,u,u)$ the limit of this sequence and we have for all $p\geq 2$  
\begin{equation}\label{Tlp}
\|\T(u) \|_{L^p_{\mu}\H^{-\s}(\R^2)}\leq C_p.
\end{equation}
 \end{prop}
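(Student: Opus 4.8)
The plan is to prove the Cauchy estimate by reducing everything to the explicit holomorphic structure of the nonlinearity. The key point is that on the support of $\mu$, the random variable $\gamma$ is a Gaussian series in the holomorphic Hermite functions $\phi_n^{hol}$, and by \eqref{tfi} the operator $\T$ acts almost diagonally on these: $\T(\phi^{hol}_{n_1},\phi^{hol}_{n_2},\phi^{hol}_{n_3})=\alpha_{n_1,n_2,n_3,n_4}\phi^{hol}_{n_4}$ with $n_4=n_1+n_2-n_3$ and an explicit coefficient $\alpha$ supported on $n_1+n_2=n_3+n_4$. So the first step is to expand $u=\sum_n c_n\phi_n^{hol}$ (with $c_n=g_n$ under $\mu$, since the white noise series has no decay weights), and write the coefficient of $\phi_{n_4}^{hol}$ in $\T_N(u)-\T_M(u)$ as a cubic sum over the indices $n_1,n_2,n_3$ with the constraint $n_1+n_2=n_3+n_4$, where the cutoffs $\Pi_N,\Pi_M$ restrict the range of summation. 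Since $\|\phi_{n_4}^{hol}\|_{\H^{-\s}}^2\approx \langle n_4\rangle^{-\s}\approx (n_4+1)^{-\s}$, the $\H^{-\s}$ norm squared is $\sum_{n_4}(n_4+1)^{-\s}|d_{n_4}|^2$ where $d_{n_4}$ is that cubic coefficient.

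Next I would compute the $p$-th moment. Because the $c_n$ are i.i.d.\ complex Gaussians, the quantity $d_{n_4}$ is a homogeneous degree-three polynomial (a cubic Wiener chaos element) in the Gaussians, so by hypercontractivity (Gaussian moment bounds / Wick calculus) it suffices to control the second moment and then pass to $L^p$ at the cost of a constant depending on $p$. Concretely, $\E\|\T_N(u)-\T_M(u)\|_{\H^{-\s}}^{2}=\sum_{n_4}(n_4+1)^{-\s}\,\E|d_{n_4}|^{2}$, and $\E|d_{n_4}|^2$ is a sum of $|\alpha_{n_1,n_2,n_3,n_4}|^2$ over the admissible triples lying in the symmetric difference of the two cutoff regions (plus the diagonal contractions, which are lower order and handled the same way). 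The explicit formula $\alpha_{n_1,n_2,n_3,n_4}=\tfrac{\pi}{8}\tfrac{(n_1+n_2)!}{2^{n_1+n_2}\sqrt{n_1!n_2!n_3!n_4!}}\mathbf 1_{n_1+n_2=n_3+n_4}$ must then be estimated; using Stirling one finds $\alpha_{n_1,n_2,n_3,n_4}$ decays like a negative power in the relevant variables, and the key algebraic fact is that the combinatorial sum over triples with $n_1+n_2=n_3+n_4=:m$ of $|\alpha|^2$ is summable against $(n_4+1)^{-\s}$ precisely when $\s>1$, which is why the statement requires $\s>1$ (matching the regularity $X_{hol}^{-1}$).

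The main obstacle is exactly this combinatorial/analytic estimate: bounding $\sum_{n_4}(n_4+1)^{-\s}\sum_{n_1+n_2=n_3+n_4}|\alpha_{n_1,n_2,n_3,n_4}|^2$ and, crucially, showing that when one of $M,N$ is inserted as a cutoff the tail of this sum is $O(M^{-\delta})$ for some $\delta>0$. I would isolate the factor $\tfrac{(n_1+n_2)!^2}{4^{n_1+n_2}n_1!n_2!n_3!n_4!}$, recognize the central-binomial-type behavior (so that $\sum_{n_1+n_2=m}\tfrac{(n_1+n_2)!^2}{4^{n_1+n_2}n_1!n_2!}\approx \sqrt m$ up to constants), and track how restricting to indices with $\max\gtrsim M$ gains a power $M^{-\delta}$ after summing against $(n_4+1)^{-\s}$ with $\s>1$. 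Once this second-moment Cauchy bound is in hand, hypercontractivity upgrades it to the stated $L^p$ Cauchy bound for every $p\geq 2$, the limit $\T(u)$ exists in $L^p_\mu\H^{-\s}$, and the uniform bound \eqref{Tlp} follows by taking $M\to\infty$ (or by the same second-moment computation without the cutoff difference) and invoking hypercontractivity once more.
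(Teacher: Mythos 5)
Your overall strategy coincides with the paper's: expand in the $\phi_n^{hol}$ basis using \eqref{tfi} with coefficients $g_n$, compute the second moment via Gaussian pairings (including the contracted ones), and upgrade from $p=2$ to all $p\geq 2$ by Wiener-chaos hypercontractivity, exactly as the paper does by invoking \cite{ThTz}. However, the one concrete computation you offer for what you yourself identify as the main obstacle is false. You claim $\sum_{n_1+n_2=m}\frac{(m!)^2}{4^m n_1!\,n_2!}\approx\sqrt m$; in fact, since $\sum_{n_1+n_2=m}\frac{1}{n_1!\,n_2!}=\frac{2^m}{m!}$, that sum equals $\frac{m!}{2^m}$, which grows superexponentially. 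The sum only becomes manageable when you keep the $n_3!\,n_4!$ factors from $|\alpha|^2$: with $n_3+n_4=m$ and $n_4=p$ fixed, $\sum_{n_1+n_2=m}|\alpha_{n_1,n_2,n_3,p}|^2\approx\frac{m!}{2^m\,p!\,(m-p)!}=2^{-m}\binom{m}{p}$, and $\sum_{m\geq p}2^{-m}\binom{m}{p}=2$ (the paper's Lemma~\ref{lemB}, a negative-binomial identity). So there is no polynomial decay per shell to exploit, and merely ``summing against $(n_4+1)^{-\s}$'' cannot produce the $M^{-\delta}$ gain; your sketch does not identify where that gain actually comes from.

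In the paper the gain is produced by a two-regime splitting that your plan is missing: for $p\leq M^{\varepsilon}$, the cutoff forces $L=n_1+n_2\geq M$ and one uses $\frac{L!}{2^L\,(L-p)!}\leq C\,2^{-L/2}$ valid for $p\leq L^{\varepsilon}$ (Lemma~\ref{lemA}), yielding superexponential smallness $\lesssim 2^{-M/2}$; for $p>M^{\varepsilon}$, one uses only the tail of the convergent series $\sum_p(1+p)^{-\s}$ with $\s>1$, which gives $M^{-\delta}$. Without this splitting, and with your incorrect $\sqrt m$ asymptotic in place of the binomial identities, the estimate does not close. A secondary inaccuracy: the contracted (Case 2) pairings, which force e.g. $n_2=m_2=p$, are not ``lower order''; in the paper they contribute $J_2\leq CM^{-\delta}$, the same size as the main term, and they require the same two-regime analysis.
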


Before we turn to the proof of Proposition \ref{Prop.cauchy}, let us state two elementary results which will be needed in the sequel

\begin{lemm}\ph \label{lemB}
For all $n\in \N$,
  \begin{equation*}
   \sum_{k=n}^{+\infty} \frac1{2^k}\binom{k}{n}=  \sum_{k=n}^{+\infty} \frac{k!\,}{2^{k}\,n !\, (k-n) !\,}=2.
  \end{equation*}
\end{lemm}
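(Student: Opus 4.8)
The statement to prove is the elementary combinatorial identity
\begin{equation*}
\sum_{k=n}^{+\infty} \frac{1}{2^k}\binom{k}{n} = 2.
\end{equation*}

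The plan is to recognize the left-hand side as a special value of the generating function for binomial coefficients with a fixed lower index. First I would recall the standard power series identity
\begin{equation*}
\sum_{k=n}^{+\infty} \binom{k}{n} x^{k} = \frac{x^{n}}{(1-x)^{n+1}},
\end{equation*}
valid for $|x|<1$, which follows from differentiating the geometric series $n$ times (or equivalently from the negative binomial expansion $(1-x)^{-(n+1)} = \sum_{j\geq 0}\binom{n+j}{n}x^{j}$ after the shift $k=n+j$, using $\binom{k}{n}=\binom{n+(k-n)}{n}$). Substituting $x=\tfrac12$ then gives
\begin{equation*}
\sum_{k=n}^{+\infty}\frac{1}{2^{k}}\binom{k}{n} = \frac{(1/2)^{n}}{(1/2)^{n+1}} = 2,
\end{equation*}
which is exactly the claim.

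Alternatively, if one prefers to avoid quoting the generating function, I would argue by induction on $n$. For $n=0$ the sum is $\sum_{k\geq 0}2^{-k}=2$. For the inductive step I would use the Pascal-type absorption together with the hockey-stick relation, or more directly verify that the value is independent of $n$ by comparing consecutive sums via the identity $\binom{k}{n}=\binom{k-1}{n-1}+\binom{k-1}{n}$ and reindexing; a short manipulation shows $S_{n}=S_{n-1}$, and since $S_{0}=2$ the result follows. Either route is purely formal.

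There is essentially no obstacle here: the only point requiring minimal care is the justification of interchanging summation and differentiation (or the convergence of the rearranged series), which is immediate since for $0<x<1$ all terms are positive and the series converges absolutely. I would therefore favor the generating-function computation for brevity, presenting the negative binomial identity and the single evaluation at $x=\tfrac12$.
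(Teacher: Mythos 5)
Your proof is correct and coincides with the paper's own argument: both differentiate the geometric series $n$ times and evaluate at $z=\tfrac12$ (your substitution $x=\tfrac12$ into $\sum_{k\geq n}\binom{k}{n}x^{k}=x^{n}/(1-x)^{n+1}$ is exactly that computation). The inductive alternative you sketch is a valid fallback but unnecessary here.
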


\begin{proof}
For $\vert z\vert<1$ we have $\dis \frac1{1-z}=\sum_{k=0}^{+\infty} z^k$. If one differentiates $n$ times this formula we get 
\begin{equation*}
 \frac{n !\,}{(1-z)^{n+1}}=\sum_{k=n}^{+\infty}   \frac{k!\,}{   (k-n) !\,} z^{k-n},
\end{equation*}
which implies the result, taking $z=1/2$.
\end{proof}

\begin{lemm}\ph \label{lemA}
Let $0<\eps<1$ and $p,L\geq 1$ so that $p\leq L^{\eps}$. Then 
  \begin{equation*}
  \frac{L!\,}{   2^{L}  \,   (L-p)   !\, }\leq C  2^{-L/2}.
  \end{equation*}
\end{lemm}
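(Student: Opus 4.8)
The plan is to reduce the quotient of factorials to an elementary power bound, and then exploit the fact that $L^\eps$ grows strictly slower than $L$ when $\eps<1$. First I would write the factorial ratio as a product of $p$ consecutive factors:
$$\frac{L!}{(L-p)!}=\prod_{j=0}^{p-1}(L-j),$$
each factor being at most $L$, so that $\dis\frac{L!}{(L-p)!}\leq L^p$ (note that $p\leq L^\eps\leq L$ since $\eps<1$ and $L\geq 1$, so the factorial $(L-p)!$ is well defined). Dividing through by $2^{L}$ and factoring out the target quantity $2^{-L/2}$, it then suffices to bound the remaining factor $\dis\frac{L^p}{2^{L/2}}$ by a constant independent of the admissible pair $(L,p)$.

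Next I would pass to logarithms. Since $L\geq 1$, the map $p\mapsto L^p$ is nondecreasing, so under the hypothesis $p\leq L^\eps$ we obtain
$$\log\frac{L^p}{2^{L/2}}=p\log L-\frac{L}{2}\log 2\leq L^\eps\log L-\frac{L}{2}\log 2.$$
Because $\eps<1$, the function $L\mapsto L^\eps\log L$ is negligible compared with $L$, so the right-hand side tends to $-\infty$ as $L\to +\infty$. Exponentiating yields $\dis\frac{L^p}{2^{L/2}}\leq C$, and hence $\dis\frac{L!}{2^{L}(L-p)!}\leq C\,2^{-L/2}$, as claimed.

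There is no genuine obstacle here; the only point requiring a little care is that the bound must hold uniformly over all admissible $(L,p)$ rather than merely asymptotically. This is settled by noting that the single-variable function $L^\eps\log L-\frac{L}{2}\log 2$ is continuous on $[1,+\infty)$ and tends to $-\infty$, hence attains a finite supremum; the dependence on $p$ for each fixed $L$ has already been removed by the monotonicity step. Collecting the resulting constant gives the stated inequality.
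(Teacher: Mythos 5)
Your proposal is correct and follows essentially the same route as the paper, whose entire proof is the one-line chain $\frac{L!}{(L-p)!}\leq L^{p}\leq C\,2^{L/2}$ using $p\leq L^{\eps}$. You have simply made explicit the two points the paper leaves implicit (the telescoping product bound and the uniformity of the constant via $L^{\eps}\log L=o(L)$), which is a faithful elaboration rather than a different argument.
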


\begin{proof}
The proof is straightforward. By the assumption  $p\leq L^{\eps}$
\begin{equation*}
  \frac{L!\,}{       (L-p)!\, } \leq L^p \leq C 2^{L/2},
   \end{equation*}
   which was the claim.
\end{proof}

\begin{proof}[Proof of Proposition \ref{Prop.cauchy}]
 By the result \cite[Proposition 2.4]{ThTz} on the Wiener chaos, we only have to prove the statement for $p=2$.
 
Firstly, by definition of the measure $\mu$ 
\begin{equation*}
\int_{X^{-1}(\R^2)}\|\T_{N}(u)-\T_{M}(u)\|^{2}_{\H^{-\s}(\R^2)}\text{d}\mu(u)=\int_{\Omega}\|\T_{N}\big(\gamma(\om)\big)-\T_{M}\big(\gamma(\om)\big)\|^{2}_{\H^{-\s}(\R^2)}\text{d}{\bf p}(\om).
\end{equation*}
Therefore, it is enough to prove that  $\big(\T_{N}(\gamma)\big)_{N\geq1}$ is a Cauchy sequence in $L^{2}\big(\Omega; \H^{-\s}(\R^2)\big)$. 
Let $1\leq M<N$ and fix $\alpha>1/2$. 
By \eqref{tfi} we get
\begin{eqnarray*}
H^{-\alpha}\T_{N}(\gamma)&=&\frac1{2^{\alpha}}\sum_{A_N}\frac{g_{n_{1}}{g}_{n_{2}}\ov{g_{n_{3}}}}{   (n_1+n_2-n_3+1)^{\alpha}}\T(\phi^{hol}_{n_{1}},\phi^{hol}_{n_{2}},\phi^{hol}_{n_{3}})\\
&=&\frac{\pi}{8\cdot 2^{\alpha}}\sum_{A_N}\frac{(n_1+n_2)!\,}{  2^{n_1+n_2} \sqrt{n_1!\, n_2!\, n_3!\, (n_1+n_2-n_3)!\,}}  \frac{g_{n_{1}}{g}_{n_{2}}\ov{g_{n_{3}}}}{   (n_1+n_2-n_3+1)^{\alpha}}   \phi^{hol}_{n_1+n_2-n_3}\\
&=&\frac{\pi}{8\cdot 2^{\alpha}}\sum_{p=0}^{N} \frac{1}{   (p+1)^{\alpha}} \Big( \sum_{A^{(p)}_N}   \frac{(n_1+n_2)!\,}{  2^{n_1+n_2} \sqrt{n_1!\, n_2!\, n_3!\, p!\,}} \, {g_{n_{1}}{g}_{n_{2}}\ov{g_{n_{3}}}}\Big)\phi^{hol}_{p},
\end{eqnarray*}
with
\begin{equation*}
A_{N}=\big\{n\in \N^{3} \;\;\text{s.t.}\;\; 0\leq {n_j}\leq N, 
\;\;0\leq {n_1}+{n_2}-{n_3}\leq N\big\},
\end{equation*} 
\begin{equation*}
A^{(p)}_{N}=\big\{n\in \N^{3} \;\;\text{s.t.}\;\; 0\leq {n_j}\leq N, 
\;\; {n_1}+{n_2}-{n_3}=p\big\} \quad \mbox{if \;$0\leq p \leq N$}.
\end{equation*} 
 Therefore,
 \begin{multline*}
 \Vert  \T_{N}(\gamma)- \T_{M}(\gamma) \Vert^2_{\H^{-\a}(\R^2)}=\\
 =\frac{\pi^2}{64\cdot 2^{2\alpha}} \sum_{ p= 0  }^{N} \frac{1}{(p+1)^{2\alpha}}\sum_{(n,m) \in A^{(p)}_{M,N}\times  A^{(p)}_{M,N}}\frac{(n_1+n_2)!\, (m_1+m_2)!\, g_{n_{1}}{g}_{n_{2}}\ov{g_{n_{3}}}  \ov{g_{m_{1}}{g}_{m_{2}}}{g_{m_{3}}}}{   2^{n_1+n_2} 2^{m_1+m_2}p!\, \sqrt{n_1!\, n_2!\, n_3!\, } \sqrt{m_1!\, m_2!\, m_3!\, } }   \end{multline*}
 where $A^{(p)}_{M,N}$ is the set defined by
\begin{multline*}
A^{(p)}_{M,N}=\Big\{n\in \N^{3} \;\;\text{s.t.}\;\; 0\leq  {n_j}\leq N,
\;\;{n_1}+ {n_2}- {n_3} = p \in \{0 \dots N\},  \\
\;\;    \;\;\big( {n_1}>M        \;\;\text{or}\;\;   {n_2}>M     \;\;\text{or}\;\;   {n_3}>M \;\;\text{or}\;\; p>M \big) \Big\}.
\end{multline*} 
 Now we take the integral over $\Omega$. Since $(g_{n})_{n\geq 0}$ are independent and centred Gaussians,  we deduce that each term in the r.h.s. vanishes, unless \\[5pt]
 \indent  {\bf Case 1:}
 $(n_{1},n_2,n_3 )=(m_{1},m_2,m_3 )$ or $(n_{1},n_2,n_3 )=(m_{2},m_1,m_3)$ \\[5pt]
 or 
 \\[5pt]
 \indent  {\bf Case 2:}
$(n_{1},n_2,  m_1)=(n_{3},m_2,m_3)$ or  $(n_{1},n_2,  m_2 )=(n_{3},m_1,m_3 )$ or $(n_1,n_2,m_3) = (m_1,n_3,m_2)$ or $(n_1,n_2,m_3) = (m_2,n_3,m_1)$.  \\[5pt]
~
We write
\begin{equation*}
\int_{\Omega} \Vert  \T_{N}(\gamma)- \T_{M}(\gamma) \Vert^2_{\H^{-2\a}(\R^2)} d{\bf p}=J_1+J_2,
\end{equation*}
where $J_1$ and $J_2$ correspond to the contribution in the sum  of each of the previous cases.\ligne

 {\bf Contribution in case 1:} By symmetry, we can assume that $(n_{1},n_2,n_3)=(m_{1},m_2,m_3)$. Define 
\begin{equation*}
B_{M,N}^{(p)}=  \Big\{n\in \N^{2}\;\;\text{s.t.}\;\; 0\leq  {n_j}\leq N,  \;\; \text{and}
     \;\;\big( {n_1}>M        \;\;\text{or}\;\;   {n_2}>M     \;\;\text{or}\;\; {n_1+n_2-p}>M  \;\;\text{or}\;\; p>M \big)    \Big\}.
\end{equation*} 
Then 
\begin{equation*}
J_1\leq C\sum_{ p\geq 0  } \frac{1}{(1+p)^{2\alpha}}\sum_{B_{M,N}^{(p)}}\frac{\big((n_1+n_2)!\,\big)^2 }{   2^{2(n_1+n_2)}  p!\, n_1!\, n_2!\, (n_1+n_2-p)!\, } .
\end{equation*}
In the previous sum, we make the change of variables $L=n_1+n_2$, and we observe that on $B_{M,N}^{(p)}$, we have $L\geq M$, then
\begin{eqnarray*}
J_1&\leq &C\sum_{p\geq 0 }  \frac{1}{(1+p)^{2\alpha}}\ \sum_{L\geq p+M} \sum_{n_1=0}^{L} \frac{(L!\,)^2 }{   2^{2L}  p!\, n_1!\, (L-n_1)!\, (L-p)!\, } \nonumber \\
& = &C\sum_{p\geq 0 }  \frac{1}{(1+p)^{2\alpha}}\ \sum_{L\geq p+M}  \frac{L!\,}{   2^{L}  p!\,   (L-p)!\, }, \nonumber 
\end{eqnarray*}
where we used the fact that $\sum_{n_1=0}^{L}  \binom{L}{n_1}=2^L$. Let $\eps>0$ and split the previous sum into two pieces
  \begin{eqnarray*}
  J_1&\leq& C\sum_{p=0}^{M^{\eps}}  \frac{1}{(1+p)^{2\alpha}}\ \sum_{L=M}^{+\infty}  \frac{L!\,}{   2^{L}  p!\,   (L-p)!\, }
  +C\sum_{p =M^{\eps}+1 }^{+\infty}  \frac{1}{(1+p)^{2\alpha}}\ \sum_{L= p}^{+\infty}  \frac{L!\,}{   2^{L}  p!\,   (L-p)!\, }\\
  &\leq& C\sum_{p=0}^{M^{\eps}}  \frac{1}{(1+p)^{2\alpha}}\ \sum_{L=M}^{+\infty}  \frac{L!\,}{   2^{L}  p!\,   (L-p)!\, }
  +2C\sum_{p =M^{\eps}+1 }^{+\infty}  \frac{1}{(1+p)^{2\alpha}} \\
  &:=&J_{11}+J_{12},
 \end{eqnarray*}
by Lemma \ref{lemB}. For the first sum, we can use Lemma \ref{lemA}, since $p\leq M^{\eps}\leq L^{\eps}$, thus 
\begin{equation*}
J_{11}\leq C\sum_{p=0}^{M^{\eps}}  \frac{1}{(1+p)^{2\alpha}\, p!}\ \sum_{L=M}^{+\infty}  \frac{1}{   2^{L/2}   }\leq C \sum_{L=M}^{+\infty}  \frac{1}{   2^{L/2}   }\leq C M^{-\delta}.
\end{equation*}
Next, clearly $J_{12} \leq C M^{-\delta}$ because $\a>1/2$, and this gives $J_{1} \leq C M^{-\delta}$.
\ligne

  {\bf Contribution in case 2:} We can assume that  $(n_{1},n_2,  m_1)=(n_{3},m_2,m_3)$.  Then for $n,m\in A^{(p)}_{M,N}$ we have $n_2=m_2=p$. Moreover, by symmetry, we can assume that $n_1>M$ or $p>M$. Thus
  \begin{multline*}
  J_2\leq C\sum_{ p\geq 0  } \frac{1}{(1+p)^{2\alpha}}   \sum_{ n_1=M+1  }^{+\infty}  \sum_{ m_1=0 }^{+\infty} \frac{(n_1+p)!\, (m_1+p)!\, }{   2^{n_1+p} 2^{m_1+p} {n_1!\,  }  {m_1!\,  (p!\,)^2 } }\\+C\sum_{ p\geq M+1  } \frac{1}{(1+p)^{2\alpha}}   \sum_{ n_1=0 }^{+\infty}  \sum_{ m_1=0 }^{+\infty} \frac{(n_1+p)!\, (m_1+p)!\, }{   2^{n_1+p} 2^{m_1+p} {n_1!\,  }  {m_1!\,  (p!\,)^2 } }:=J_{21}+J_{22}. 
 \end{multline*}
 To begin with, by Lemma \ref{lemB}, we have 
   \begin{eqnarray*}
  J_{22}&=& C\sum_{ p\geq M+1  } \frac{1}{(1+p)^{2\alpha}}   \Big(   \sum_{ n_1=0  }^{+\infty}  \frac{(n_1+p)!\,  }{   2^{n_1+p}   {n_1!\,  }    p!\,  }  \Big)  \Big( \sum_{ m_1=0 }^{+\infty} \frac{  (m_1+p)!\, }{   2^{m_1+p} {  }  {m_1!\,  p!\, } }\Big) \\
&=& 4C\sum_{ p\geq M+1  } \frac{1}{(1+p)^{2\alpha}} \leq c M^{-\delta}.
 \end{eqnarray*}
Then by Lemma \ref{lemB} again
 \begin{eqnarray*}
  J_{21}&=& C\sum_{ p\geq 0  } \frac{1}{(1+p)^{2\alpha}}   \Big(   \sum_{ n_1=M+1  }^{+\infty}  \frac{(n_1+p)!\,  }{   2^{n_1+p}   {n_1!\,  }    p!\,  }  \Big)  \Big( \sum_{ m_1=0 }^{+\infty} \frac{  (m_1+p)!\, }{   2^{m_1+p} {  }  {m_1!\,  p!\, } }\Big) \\
  &= & 2C\sum_{ p\geq 0  } \frac{1}{(1+p)^{2\alpha}}   \Big(   \sum_{ n_1=M+1  }^{+\infty}  \frac{(n_1+p)!\,  }{   2^{n_1+p}   {n_1!\,  }    p!\,  }  \Big)  \\
   &= & 2C\sum_{ p=0  }^{M^{\eps}} \frac{1}{(1+p)^{2\alpha}}   \Big(   \sum_{ n_1=M+1  }^{+\infty}  \frac{(n_1+p)!\,  }{   2^{n_1+p}   {n_1!\,  }    p!\,  }  \Big) +2C\sum_{ p=M^{\eps}+1  }^{+\infty} \frac{1}{(1+p)^{2\alpha}}   \Big(   \sum_{ n_1=M+1  }^{+\infty}  \frac{(n_1+p)!\,  }{   2^{n_1+p}   {n_1!\,  }    p!\,  }  \Big) \\
    &:= &K_{1}+K_{2}.
 \end{eqnarray*}
 One the one hand, by Lemma \ref{lemA}
 \begin{equation*}
 K_{1}\leq C \Big(\sum_{ p=0  }^{M^{\eps}} \frac{1}{(1+p)^{2\alpha}\, p!}    \Big) \Big(   \sum_{ n_1=M+1  }^{+\infty}   2^{-n_1/2}   \Big)\leq C M^{-\delta},
 \end{equation*}
and one the other hand, by Lemma \ref{lemB}, since $\a >1/2$
  \begin{equation*}
 K_{2}\leq C\sum_{ p=M^{\eps}+1  }^{+\infty} \frac{1}{(1+p)^{2\alpha}} \leq C M^{-\delta}.
 \end{equation*}
 Putting all the estimates together, we get $J_{2}\leq C M^{-\delta}$, which concludes the proof. \end{proof}

\subsection{\bf Study  of the measure \texorpdfstring{$\boldsymbol \nu_{N}$}{nu}}
%%%%%%%%%%%%%%%%%%%%%%%%%%%%%%%%%%%%%%%%%%%%%%%%%%%%%

Let $N\geq 1$. We then consider the following approximation of~\eqref{CR}
\begin{equation}\label{ODE}
\left\{
\begin{aligned}
&i\partial_{t}u  = \T_N(u),\;\;
(t,x)\in\R\times \R^{2},\\
&u(0,x)=  f(x) \in X^{-1}(\R^{2}).
\end{aligned}
\right.
\end{equation}
 
 The equation \eqref{ODE} is an ODE in the frequencies less than $N$,    ${(1-\Pi_N)u(t)=(1-\Pi_N)f}$ and for all $t\in \R$.
 
 The main motivation to introduce this system is the following proposition, whose proof we omit.

\begin{prop} \label{4.4}
The equation \eqref{ODE} has a global flow $\Phi_{N}$. Moreover, the measure $\mu$ is invariant under  $\Phi_{N}$  : For any Borel set $A\subset X^{-1}(\R^{2})$ and for all $t\in \R$, $\mu\big(\Phi_{N}(t)(A)\big)=\mu\big( A\big)$.
\end{prop}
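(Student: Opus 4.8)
The plan is to exploit that \eqref{ODE} is an autonomous, finite-dimensional Hamiltonian system on the low-frequency block, decoupled from an inert high-frequency block, and to combine Liouville's theorem with the conservation of the low-frequency $L^2$ mass, exactly as in the proof of Theorem~\ref{thmN}. First I would establish the global flow. Set $V_N := \Pi_N\big(X^{-1}(\R^2)\big) = \mathrm{span}\{\phi_0^{hol},\dots,\phi_N^{hol}\}$, of complex dimension $N+1$, and decompose $u = \Pi_N u + (1-\Pi_N)u$. Since $\T_N(u) = \Pi_N\T(\Pi_N u,\Pi_N u,\Pi_N u)$ depends only on $\Pi_N u$ and takes values in $V_N$, the equation \eqref{ODE} splits: the component $(1-\Pi_N)u(t) = (1-\Pi_N)f$ stays frozen, while $w(t) := \Pi_N u(t)$ solves the autonomous system $i\partial_t w = \Pi_N\T(w,w,w)$ on $V_N$. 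In the coordinates $w = \sum_{n=0}^N c_n\phi_n^{hol}$ this is a system of $N+1$ ODEs whose right-hand side is a cubic polynomial in $(c_n,\overline{c_n})$ with coefficients $\alpha_{n_1,n_2,n_3,n_4}$ from \eqref{tfi}, hence locally Lipschitz, so the Cauchy--Lipschitz theorem yields a unique local flow. A direct computation, using $i\partial_t w = \Pi_N\T(w,w,w)$ and the reality of $\langle\T(w,w,w),w\rangle_{L^2} = \mathcal{E}(w)$, gives $\frac{d}{dt}\|w(t)\|_{L^2(\R^2)}^2 = 2\,\Im\langle\T(w,w,w),w\rangle_{L^2} = 0$, so $\|w(t)\|_{L^2}^2 = \sum_{n=0}^N|c_n(t)|^2$ is conserved. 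This a priori bound confines the trajectory to a sphere in $V_N$, rules out finite-time blow-up, and produces a global flow $\psi_N(t)$ on $V_N$; setting $\Phi_N(t) := \psi_N(t)\oplus\mathrm{Id}$ defines the global flow on $X^{-1}(\R^2)$, for every $f$ (since $\Pi_N f$ is always a finite sum).

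I would then reduce the invariance to the finite-dimensional block. Because the $g_n$ are independent, $\mu$ factors as a product $\mu = \mu_{V_N}\otimes\mu_{V_N^\perp}$ over $V_N\oplus V_N^\perp$, where $\mu_{V_N}$ is the image under $(c_n)_{n\leq N}\mapsto\sum_{n=0}^N c_n\phi_n^{hol}$ of the standard complex Gaussian $\pi^{-(N+1)}\e^{-\sum_{n=0}^N|c_n|^2}\prod_{n=0}^N da_n\,db_n$ (writing $c_n = a_n+ib_n$), and $\mu_{V_N^\perp}$ is the law of the high-frequency part. Since $\Phi_N(t) = \psi_N(t)\oplus\mathrm{Id}$ is a product map and $\mu$ is a product measure, the invariance of $\mu$ under $\Phi_N(t)$ reduces to the invariance of $\mu_{V_N}$ under $\psi_N(t)$, the identity trivially preserving $\mu_{V_N^\perp}$; this is made rigorous by approximating an arbitrary Borel set by finite unions of product sets and invoking Fubini.

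Finally, for the finite-dimensional invariance I would argue exactly as for Theorem~\ref{thmN}. The system $i\partial_t w = \Pi_N\T(w,w,w)$ is Hamiltonian with the real Hamiltonian $\mathcal{E}(w)$, so the associated vector field on $\R^{2(N+1)}$ is divergence-free and Liouville's theorem gives the invariance of Lebesgue measure $\prod_{n=0}^N da_n\,db_n$ under $\psi_N(t)$. Moreover the Gaussian weight $\pi^{-(N+1)}\e^{-\sum_{n=0}^N|c_n|^2}$ depends only on the conserved quantity $\sum_{n=0}^N|c_n|^2$, hence is itself invariant along the flow; combining the two yields $\psi_N(t)_\#\mu_{V_N} = \mu_{V_N}$, and therefore $\mu\big(\Phi_N(t)(A)\big) = \mu(A)$ for every Borel set $A$ and every $t\in\R$. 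The only genuinely delicate point is the infinite-dimensional bookkeeping: one must verify that $\mu$ really disintegrates as $\mu_{V_N}\otimes\mu_{V_N^\perp}$ and that $\Phi_N$ respects this splitting, after which the statement collapses to the classical finite-dimensional Liouville argument already used for Theorem~\ref{thmN}.
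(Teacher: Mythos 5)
Your proof is correct and is essentially the argument the paper intends: the paper explicitly omits the proof of Proposition \ref{4.4}, but your route --- freezing the high frequencies, observing that the truncated system is a finite-dimensional Hamiltonian ODE which is global by conservation of the (real) quantity $\|\Pi_N u\|_{L^2}^2$, and then combining Liouville's theorem with the fact that the Gaussian density depends only on this conserved quantity, the product structure of $\mu$ disposing of the inert high-frequency block --- is exactly the template of the paper's proof of Theorem \ref{thmN} and of the invariance lemma for $\rho_\star^N$ (which cites Burq--Thomann--Tzvetkov). One small imprecision: $\Pi_N\big(X^{-1}(\R^2)\big)=\bigoplus_{k=0}^N E_k$ has complex dimension $(N+1)(N+2)/2$, not $N+1$; your identification of the finite-dimensional block with $\mathrm{span}\{\phi_0^{hol},\dots,\phi_N^{hol}\}$ is nevertheless legitimate because $\mu$ is supported on the holomorphic subspace, which the flow $\Phi_N$ preserves in both time directions by \eqref{tfi}, but this reduction deserves an explicit word.
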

In particular if $\mathscr{L}_{X^{-1}}(v)=\mu$ then for all $t\in \R$, $\mathscr{L}_{X^{-1}}(\Phi_{N}(t)v)=\mu$.

We denote by $\nu_{N}$ the measure on $\mathcal{C}\big([-T,T]; X^{-1}(\R^{2})\big)$, defined as the image measure of $\mu$ by the map 
\begin{equation*}
 \begin{array}{rcc}
X^{-1}(\R^{2})&\longrightarrow& \mathcal{C}\big([-T,T]; X^{-1}(\R^{2})\big)\\[3pt]
\dis  v&\longmapsto &\dis \Phi_{N}(t)(v).
 \end{array}
 \end{equation*}

%%%%%%%%%%%%%%%%%%%%%%%%%%%%%%%%%%%%%%%%%%%%%%%%%%%%%

%%%%%%%%%%%%%%%%%%%%%%%%%%%%%%%%%%%%%%%%%%%%%%%%%%%%%

\begin{lemm}\label{penguin}\ph Let $\s>1$ and  $p\geq 2$. Then there exists $C>0$ so that for all $N\geq1 $
\begin{equation*} 
\big\|\| u\|_{W^{1,p}_{T}\H^{-\s}_{x}}\big\|_{L^{p}_{\nu_{N}}}\leq  C.
\end{equation*}
\end{lemm}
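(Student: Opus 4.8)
The plan is to unfold the image measure $\nu_N$ through its defining flow map and then exploit the invariance of $\mu$ under $\Phi_N$ (Proposition \ref{4.4}) to reduce the whole estimate to two \emph{time-independent} moment bounds that are already available. Writing $u(t)=\Phi_N(t)v$ and recalling that, for the Bochner space, $\|u\|_{W^{1,p}_{T}\H^{-\s}_{x}}^p \approx \|u\|_{L^p_T\H^{-\s}_x}^p + \|\partial_t u\|_{L^p_T\H^{-\s}_x}^p$, the change of variables $\nu_N=(\Phi_N)_\#\mu$ gives
\begin{equation*}
\big\|\| u\|_{W^{1,p}_{T}\H^{-\s}_{x}}\big\|_{L^{p}_{\nu_{N}}}^p \lesssim \int_{X^{-1}(\R^2)} \int_{-T}^{T} \Big( \|\Phi_N(t)v\|_{\H^{-\s}}^p + \|\partial_t \Phi_N(t)v\|_{\H^{-\s}}^p \Big)\, dt \, d\mu(v).
\end{equation*}
By Tonelli I would swap the order of integration, so that it suffices to bound, uniformly in $t$ and $N$, the inner $\mu$-integrals of the two terms.

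For the first term, the invariance of $\mu$ yields $\mathscr{L}_{X^{-1}}(\Phi_N(t)v)=\mu$ (the statement following Proposition \ref{4.4}), hence $\int_{X^{-1}(\R^2)} \|\Phi_N(t)v\|_{\H^{-\s}}^p\, d\mu(v) = \int_{X^{-1}(\R^2)} \|w\|_{\H^{-\s}}^p\, d\mu(w)$, a constant independent of both $t$ and $N$. This quantity is finite: since $\mu={\bf p}\circ\gamma^{-1}$ with $\gamma=\sum_n g_n \phi_n^{hol}$, one has $\|\gamma\|_{\H^{-\s}}^2=\sum_n |g_n|^2/\lambda_n^{\s}$, whose expectation is $\sum_n \lambda_n^{-\s}<+\infty$ precisely because $\s>1$. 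The passage from the $L^2_\mu$ to the $L^p_\mu$ moment is then free via the Wiener chaos estimate \cite[Proposition 2.4]{ThTz}, exactly as in the proof of Proposition \ref{Prop.cauchy}.

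For the second term, I would use the equation \eqref{ODE} itself: since $\Phi_N$ is the (smooth, global) flow of the polynomial ODE \eqref{ODE}, we have $\partial_t \Phi_N(t)v = -i\,\T_N(\Phi_N(t)v)$, so that $\|\partial_t \Phi_N(t)v\|_{\H^{-\s}} = \|\T_N(\Phi_N(t)v)\|_{\H^{-\s}}$. Applying the invariance of $\mu$ once more converts the inner integral into $\int_{X^{-1}(\R^2)} \|\T_N(w)\|_{\H^{-\s}}^p\, d\mu(w) = \|\T_N(u)\|_{L^p_\mu \H^{-\s}}^p$, which is bounded uniformly in $N$: Proposition \ref{Prop.cauchy} shows that $(\T_N)_N$ is Cauchy in $L^p_\mu\H^{-\s}$ with limit $\T$ satisfying \eqref{Tlp}, so the whole sequence is bounded by some $C_p$ independent of $N$ (for instance $\|\T_N\|_{L^p_\mu\H^{-\s}}\leq \|\T_1\|_{L^p_\mu\H^{-\s}}+C$ by taking $M=1$ in the Cauchy estimate). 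Integrating the two now-constant integrands over $t\in[-T,T]$ produces the factor $2T$ and yields the claimed bound with a constant independent of $N$.

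The bulk of the analytic work has thus already been carried out upstream. The only points requiring care are (i) legitimately invoking $\mathscr{L}_{X^{-1}}(\Phi_N(t)v)=\mu$ at fixed $t$ underneath the time integral, which Tonelli permits, and (ii) the \emph{uniformity in $N$} of the moment bound for $\T_N$, which I regard as the genuine input and which is supplied entirely by Proposition \ref{Prop.cauchy}. I do not expect any essential difficulty beyond correctly assembling these ingredients; note that the hypothesis $\s>1$ enters in exactly one place, namely the convergence of $\sum_n \lambda_n^{-\s}$, reflecting that the free field $\gamma$ just fails to be square-integrable at the $\H^{-1}$ endpoint.
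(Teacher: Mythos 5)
Your proposal is correct and follows essentially the same route as the paper's proof: push $\nu_N$ back to $\mu$ via the flow, use the invariance of $\mu$ under $\Phi_N$ together with Tonelli to remove the time integral (producing the factor $2T$), bound the zeroth-order term by the Gaussian moment computation for $\gamma$ (your Wiener chaos upgrade to $L^p_\mu$ plays the role of the paper's Khintchine inequality, and your observation that $\s>1$ is exactly what makes $\sum_n\lambda_n^{-\s}$ converge is the right accounting for the white noise measure), and bound $\partial_t u$ by substituting the equation $\partial_t u=-i\T_N(u)$ and invoking the uniform-in-$N$ moment bound from Proposition \ref{Prop.cauchy}. Your remark that uniformity in $N$ of $\|\T_N\|_{L^p_\mu\H^{-\s}}$ follows from the Cauchy estimate (e.g.\ taking $M=1$) correctly identifies the one genuine input, matching the paper's appeal to \eqref{Tlp} and Proposition \ref{Prop.cauchy}.
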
  
  
\begin{proof}

Firstly, we have that  for  $\s>1$, $p\geq 2$ and  $N\geq1$
\begin{equation*} 
\big\| \| u\|_{L^{p}_{T}\H^{-\s}_{x}}\big\|_{L^{p}_{\nu_{N}}}\leq  C.
\end{equation*}
Indeed, by the definition of $\nu_N$ and the invariance of $\mu$ by $\Phi_N$ we have
\begin{equation*}
\|  u\|_{L^{p}_{\nu_{N}}L^{p}_{T}\H^{-\s}_{x}}= (2T)^{1/p}\|  v\|_{L^{p}_{\mu}\H^{-\s}_{x}}=(2T)^{1/p} \|\gamma\|_{L^{p}_{\p}\H^{-\s}_{x}}.
\end{equation*}
Then, by the Khintchine inequality \eqref{khin} and \eqref{est.phi}, for all $p\geq 2$
\begin{equation*}
 \|\gamma\|_{L^{p}_{\p}\H^{-\s}_{x}}\leq C\sqrt{p} \|\gamma\|_{L^{2}_{\p}\H^{-\s}_{x}}\leq C.
\end{equation*}
 We refer to \cite[Proposition 3.1]{BTT2} for the details.

Next, we  show that $\dis \big\|  \| \partial_{t}u\|_{L^{p}_{T}\H^{-\s}_{x}}\big\|_{L^{p}_{\nu_{N}}}\leq  C. $ By definition of  $\nu_N$
\begin{eqnarray*}
\|\partial_{t}u\|^{p}_{L^{p}_{\nu_{N}}L^{p}_{T}\H^{-\s}_{x}}&=& \int_{\mathcal{C}\big([-T,T]; X^{-1}(\R^{2})\big)}\|\partial_{t} u\|^{p}_{L^{p}_{T}\H^{-\s}_{x}} \text{d}\nu_{N}(u)\\
&=& \int_{X^{-1}(\R^{2})}\|\partial_{t} \Phi_{N}(t)(v)\|^{p}_{L^{p}_{T}\H^{-\s}_{x}} \text{d}\mu(v).
\end{eqnarray*}
Now, since $\Phi_{N}(t)(v)$ satisfies \eqref{ODE} and by the invariance of $\mu$, we have 
\begin{eqnarray*}
\|\partial_{t}u\|^{p}_{L^{p}_{\nu_{N}}L^{p}_{T}\H^{-\s}_{x}}&=&  \int_{X^{-1}(\R^{2})}\|\T_N(\Phi_{N}(t)(v))\|^{p}_{L^{p}_{T}\H^{-\s}_{x}} \text{d}\mu(v)\\
&=&  2T \int_{X^{-1}(\R^{2})}\|   \T_N(v)\|^{p}_{\H^{-\s}_{x}} \text{d}\mu(v),
\end{eqnarray*}
and  conclude with \eqref{Tlp} and Proposition \ref{Prop.cauchy}.
\end{proof}

%%%%%%%%%%%%%%%%%%%%%%%%%%%%%%%%%%%%%%%%%%%%%%%%%%%%%
\subsection{\bf The convergence argument}~
%%%%%%%%%%%%%%%%%%%%%%%%%%%%%%%%%%%%%%%%%%%%%%%%%%%%%
The importance of Proposition \ref{penguin} above comes from the fact that it allows to establish the following tightness result for the measures $\nu_N$. We refer to \cite[Proposition~4.11]{BTT2} for the proof. 
\begin{prop}\ph\label{Prop.tight}
Let $T>0$ and $\s>1$. Then  the family of measures 
$$ (\nu_{N})_{N \geq 1} \quad \mbox{with} \quad \nu_N =\mathscr{L}_{\mathcal{C}_{T}\H^{-\s}}\big(u_{N}(t);t\in [-T,T]\big)$$
 is tight in $\mathcal{C}\big([-T,T]; \H^{-\s}(\R^2)\big)$.
\end{prop}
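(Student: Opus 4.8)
The plan is to deduce tightness from the uniform moment bound of Proposition~\ref{penguin} by a vector-valued Arzel\`a--Ascoli (Aubin--Lions type) argument, the key point being to work with a spatial regularity index slightly better than the target one. Fix $\s>1$ and choose $\s_0$ with $1<\s_0<\s$. Applying Proposition~\ref{penguin} with $\s_0$ in place of $\s$ and some fixed $p\geq 2$ produces a constant $C$, independent of $N$, such that
\begin{equation*}
\big\|\, \|u\|_{W^{1,p}_{T}\H^{-\s_0}_{x}} \big\|_{L^{p}_{\nu_{N}}} \leq C \qquad \text{for all } N\geq 1.
\end{equation*}

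First I would convert this into a uniform estimate on sublevel sets. By the Bienaym\'e--Tchebichev inequality, for every $R>0$ the set
\begin{equation*}
B_R = \big\{ u\in \mathcal{C}([-T,T];\H^{-\s}(\R^2)) : \|u\|_{W^{1,p}_{T}\H^{-\s_0}_{x}} \leq R \big\}
\end{equation*}
satisfies $\nu_N(B_R^{c})\leq (C/R)^{p}$ uniformly in $N$. It therefore suffices to show that each $B_R$ is relatively compact in $\mathcal{C}([-T,T];\H^{-\s}(\R^2))$: given $\eps>0$, choosing $R$ with $(C/R)^{p}<\eps$ then yields a compact set $K_\eps=\overline{B_R}$ with $\nu_N(K_\eps)\geq 1-\eps$ for all $N$, which is precisely tightness.

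The relative compactness of $B_R$ rests on two ingredients. For equicontinuity in time, the one-dimensional embedding $W^{1,p}(-T,T)\hookrightarrow C^{0,1-1/p}$ gives, for any $u\in B_R$ and $t_1,t_2\in[-T,T]$,
\begin{equation*}
\|u(t_1)-u(t_2)\|_{\H^{-\s}} \leq \|u(t_1)-u(t_2)\|_{\H^{-\s_0}} \leq \Big\|\int_{t_2}^{t_1}\pa_s u\,ds\Big\|_{\H^{-\s_0}} \leq R\,|t_1-t_2|^{1-1/p},
\end{equation*}
so the family is uniformly equicontinuous; the same embedding bounds $\sup_{t}\|u(t)\|_{\H^{-\s_0}}\leq C_T R$. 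For the pointwise relative compactness, I would use that the inclusion $\H^{-\s_0}\hookrightarrow\H^{-\s}$ is \emph{compact}, since $H$ has compact resolvent and $\s_0<\s$; hence for each fixed $t$ the set $\{u(t):u\in B_R\}$ lies in the fixed $\H^{-\s}$-compact set $\{v:\|v\|_{\H^{-\s_0}}\leq C_T R\}$. The vector-valued Arzel\`a--Ascoli theorem then shows that $B_R$ is relatively compact in $\mathcal{C}([-T,T];\H^{-\s}(\R^2))$.

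The main point — and the only place where the specific structure enters — is this gain of spatial compactness: one must spend a little of the available spatial regularity ($\s_0<\s$) to upgrade the mere boundedness furnished by Proposition~\ref{penguin} into the compact embedding $\H^{-\s_0}\hookrightarrow\H^{-\s}$. Once this is combined with the time-equicontinuity coming from the uniform control of $\pa_t u$, the argument is the standard Aubin--Lions--Simon compactness scheme, and no further property of the dynamics $\Phi_N$ is needed beyond the estimate of Proposition~\ref{penguin}.
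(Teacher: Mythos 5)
Your proof is correct and follows essentially the same route as the paper, which does not spell out the argument but defers to \cite[Proposition 4.11]{BTT2}: there, exactly as in your write-up, tightness is obtained from the uniform $W^{1,p}_{T}\H^{-\s_0}_x$ moment bound of Lemma \ref{penguin} (with $1<\s_0<\s$) via the Bienaym\'e--Tchebichev inequality together with the compactness of the embedding $W^{1,p}\big([-T,T];\H^{-\s_0}(\R^2)\big)\hookrightarrow \mathcal{C}\big([-T,T];\H^{-\s}(\R^2)\big)$, the latter resting, as you correctly note, on the compact resolvent of $H$. Your Arzel\`a--Ascoli implementation of that compact embedding is the standard one and is sound.
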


The result of Proposition \ref{Prop.tight} enables us to use the Prokhorov theorem: For each $T>0$ there exists a sub-sequence $\nu_{N_{k}}$ and a measure $\nu$ on the space $\mathcal{C}\big([-T,T]; X^{-1}(\R^{2})\big)$ so that for all $\tau>1 $ and  all bounded continuous function $F: \mathcal{C}\big([-T,T]; \H^{-\tau}(\R^{2}) \big)\longrightarrow \R$
$$
\int_{\mathcal{C}\big([-T,T]; \H^{-\tau}(\R^{2})\big) }F(u)\text{d}\nu_{N_{k}}(u)\longrightarrow  \int_{\mathcal{C}\big([-T,T]; \H^{-\tau}(\R^{2})\big) }F(u)\text{d}\nu(u).
$$
By the Skohorod theorem, there  exists a probability space $(\widetilde{\Omega},\widetilde{\mathcal{F}},\widetilde{\bf p})$, a sequence of random variables~$(\widetilde{u}_{N_{k}})$ and a random variable~$\widetilde{u}$ with values in $\mathcal{C}\big([-T,T]; X^{-1}(\R^{2})\big)$ so that 
\begin{equation}\label{loi}
\mathscr{L}\big(\widetilde{u}_{N_{k}};t\in [-T,T]\big)=\mathscr{L}\big(u_{N_{k}};t\in [-T,T]\big)=\nu_{N_{k}}, \quad \mathscr{L}\big(\widetilde{u};t\in [-T,T]\big)=\nu,
\end{equation}
 and for all $\tau >1$
\begin{equation}\label{CV}
\widetilde{u}_{N_{k}}\longrightarrow \widetilde{u},\quad \;\;\widetilde{\bf p}-\text{a.s. in}\;\; \mathcal{C}\big([-T,T]; \H^{-\tau}(\R^{2})\big).
\end{equation}

We now claim that $\mathscr{L}_{X^{-1}}( {u}_{N_{k}}(t))=\mathscr{L}_{X^{-1}}(\widetilde{u}_{N_{k}}(t))=\mu$, for all~$t\in [-T,T]$ and $k\geq 1$. Indeed, for all~$t\in [-T,T]$, the evaluation map 
\begin{equation*}
 \begin{array}{rcc}
R_{t}\, :\,\mathcal{C}\big([-T,T]; X^{-1}(\R^{2})\big)  &\longrightarrow&X^{-1}(\R^{2}) \\[3pt]
\dis  u&\longmapsto &\dis u(t,.),
 \end{array}
 \end{equation*}
is well defined and continuous.

Thus, for all $t\in [-T,T]$, $u_{N_{k}}(t)$ and $\wt{u}_{N_{k}}(t)$ have same distribution $(R_t)_\# \nu_{N_k}$. By Proposition \ref{4.4}, we obtain that this distribution is $\mu$.

Thus from \eqref{CV}  we deduce that 
\begin{equation}\label{LOI}
\mathscr{L}_{X^{-1}}( \wt{u}(t)) =\mu,\quad \forall\,t\in [-T,T].
\end{equation}
Let $k\geq 1$ and $t\in \R$ and consider the r.v. $X_{k}$ given by
\begin{equation*}
X_{k}=u_{N_{k}}(t) - R_0 (u_{N_{k}}(t)) +i \int_{0}^{t}\T_{N_{k}}( u_{N_{k}}   )ds. 
\end{equation*}
Define  $\widetilde{X}_{k}$ similarly to  $X_{k}$ with $u_{N_{k}}$ replaced with $\widetilde{u}_{N_{k}}$. Then by \eqref{loi}, 
$$\mathscr{L}_{\mathcal{C}_{T}X^{-1}}(\widetilde{X}_{N_{k}})=\mathscr{L}_{\mathcal{C}_{T}X^{-1}}(X_{N_{k}})=\delta_{0}.$$
 In other words, $\widetilde{X}_{k}=0$ ${\bf \widetilde{p}}$\,--\,a.s. and 
 $\widetilde{u}_{N_{k}}$ satisfies the following equation ${\bf  \widetilde{p}}$\,--\,a.s.
\begin{equation}\label{tilde}
\widetilde u_{N_{k}}(t) = R_0 (\widetilde u_{N_{k}}(t)) -i \int_{0}^{t}\T_{N_{k}}( \widetilde u_{N_{k}}   )ds .
%i\partial_{t}\widetilde{u}_{N_{k}}=    \T_{N_{k}}(\widetilde{u}_{N_{k}}   ).
 \end{equation}

We now show that we can pass to the limit $k\longrightarrow+\infty$ in \eqref{tilde} in order to show that $\widetilde{u}$ is ${\bf  \widetilde{p}}$\,--\,a.s. a   solution to~\eqref{CR} written in integral form as:
\begin{equation}\label{tilde2}
\widetilde u(t) = R_0 (\widetilde u(t)) -i \int_{0}^{t}\T( \widetilde u  )ds .
%i\partial_{t}\widetilde{u}_{N_{k}}=    \T_{N_{k}}(\widetilde{u}_{N_{k}}   ).
 \end{equation}

Firstly, from \eqref{CV} we deduce the convergence of the linear terms in equation \eqref{tilde} to those in \eqref{tilde2}. 
The following lemma gives the convergence of the nonlinear term.
 
\begin{lemm}\ph
Up to a sub-sequence, the following convergence holds true
\begin{equation*} 
\T_{N_{k}}(\wt{u}_{N_{k}}) \longrightarrow \T(\wt{u}),\quad \;\;\widetilde{\bf p}-\text{a.s. in}\;\; L^{2}\big([-T,T]; \H^{-\s}(\R^2)\big) . 
\end{equation*}
\end{lemm}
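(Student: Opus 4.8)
The plan is to compare $\T_{N_k}(\wt u_{N_k})$ with $\T(\wt u)$ by interposing the finite–dimensional truncations $\T_M$, which — unlike the abstractly defined limit $\T$ — are genuinely continuous maps, while exploiting at every step that $\wt u_{N_k}(t)$ and $\wt u(t)$ are distributed according to $\mu$ (by Proposition~\ref{4.4} and \eqref{LOI}); in particular $\T(\wt u)\in L^2([-T,T];\H^{-\s})$ a.s. thanks to \eqref{Tlp}. I work throughout in $L^2\big(\wt\Omega;L^2([-T,T];\H^{-\s})\big)$, which is a genuine norm and whose convergence yields $\widetilde{\bf p}$-a.s.\ convergence in $L^2([-T,T];\H^{-\s})$ along a subsequence (this is where the ``up to a subsequence'' in the statement comes from). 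Fixing a large integer $M$, I split, for $N_k>M$,
\begin{align*}
\big\|\T_{N_k}(\wt u_{N_k})-\T(\wt u)\big\|_{L^2_{\widetilde{\bf p}}L^2_T\H^{-\s}}
&\le \big\|\T_{N_k}(\wt u_{N_k})-\T_M(\wt u_{N_k})\big\|_{L^2_{\widetilde{\bf p}}L^2_T\H^{-\s}}\\
&\quad+\big\|\T_M(\wt u_{N_k})-\T_M(\wt u)\big\|_{L^2_{\widetilde{\bf p}}L^2_T\H^{-\s}}\\
&\quad+\big\|\T_M(\wt u)-\T(\wt u)\big\|_{L^2_{\widetilde{\bf p}}L^2_T\H^{-\s}}=:I+II+III.
\end{align*}

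For the two outer terms I use that $\wt u_{N_k}(t)$ and $\wt u(t)$ have law $\mu$ for every $t$. By Fubini,
$$
I^2=2T\int_{X^{-1}(\R^2)}\|\T_{N_k}(v)-\T_M(v)\|^2_{\H^{-\s}}\,d\mu(v)\le C\,T\,M^{-\delta},
$$
uniformly in $k$, by the Cauchy estimate of Proposition~\ref{Prop.cauchy} (with $p=2$); likewise $III^2=2T\int_{X^{-1}(\R^2)}\|\T_M(v)-\T(v)\|^2_{\H^{-\s}}\,d\mu(v)=:\eta_M$, which tends to $0$ as $M\to+\infty$ since $\T$ is by definition the $L^2_\mu$-limit of $(\T_M)_M$. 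Both bounds are independent of $k$.

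The middle term is where the genuine limit is taken. For fixed $M$, the map $\T_M(u)=\Pi_M\T(\Pi_M u,\Pi_M u,\Pi_M u)$ factors through the finite–dimensional space $\bigoplus_{k\le M}E_k$, on which all norms are equivalent and $\T$ is an honest continuous trilinear map; hence $\T_M:\H^{-\tau}\to\H^{-\s}$ is continuous. Combined with the a.s.\ convergence $\wt u_{N_k}\to\wt u$ in $\mathcal{C}\big([-T,T];\H^{-\tau}\big)$ from \eqref{CV}, this gives $\|\T_M(\wt u_{N_k})-\T_M(\wt u)\|_{L^2_T\H^{-\s}}\to0$ $\widetilde{\bf p}$-a.s.\ as $k\to+\infty$. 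To upgrade this to convergence in $L^2_{\widetilde{\bf p}}$ I invoke uniform integrability: using again $\wt u_{N_k}(t)\sim\mu$, Minkowski's integral inequality, and the uniform bound $\sup_N\|\T_N(v)\|_{L^p_\mu\H^{-\s}}\le C_p$ (valid for every $p\ge2$ by Proposition~\ref{Prop.cauchy} and \eqref{Tlp}), one gets $\sup_k\|\T_M(\wt u_{N_k})\|_{L^p_{\widetilde{\bf p}}L^2_T\H^{-\s}}\le C_{p,T}$ for some $p>2$, so the squared norms form a uniformly integrable family. Vitali's convergence theorem then yields $II\to0$ as $k\to+\infty$, for each fixed $M$.

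Putting the three estimates together gives $\limsup_{k\to+\infty}\|\T_{N_k}(\wt u_{N_k})-\T(\wt u)\|_{L^2_{\widetilde{\bf p}}L^2_T\H^{-\s}}\le C\,T^{1/2}M^{-\delta/2}+\eta_M^{1/2}$ for every $M$; letting $M\to+\infty$ shows $\T_{N_k}(\wt u_{N_k})\to\T(\wt u)$ in $L^2\big(\wt\Omega;L^2([-T,T];\H^{-\s})\big)$, and extracting a further subsequence upgrades this to $\widetilde{\bf p}$-a.s.\ convergence in $L^2([-T,T];\H^{-\s})$, which is the claim. I expect the main obstacle to be precisely the middle term: since $\T$ is available only as an abstract $L^p_\mu$-limit and not through a formula continuous on the support of $\mu$, one cannot pass to the limit in $\T(\wt u_{N_k})$ directly — the device of interposing the continuous finite-rank truncations $\T_M$, controlling the tails uniformly in $k$ via the Cauchy estimate, and treating the actual limit only for fixed $M$ through uniform integrability, is what makes the argument close.
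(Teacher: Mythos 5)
Your proof is correct and takes essentially the same route as the paper's: interpose the finite-rank truncations $\T_M$, use that $\wt{u}_{N_k}(t)$ and $\wt{u}(t)$ are distributed according to $\mu$ to reduce the tail terms to $\mu$-integrals controlled by Proposition \ref{Prop.cauchy}, and handle the fixed-$M$ term by finite-dimensional continuity together with the a.s.\ convergence \eqref{CV}. The only cosmetic differences are that you merge the paper's two tail terms $\T_{N_k}(\wt{u}_{N_k})-\T(\wt{u}_{N_k})$ and $\T(\wt{u}_{N_k})-\T_M(\wt{u}_{N_k})$ into a single one via the Cauchy estimate, and you upgrade the middle term to $L^2_{\widetilde{\bf p}}$ convergence by uniform integrability and Vitali --- if anything a slightly more careful bookkeeping of the modes of convergence than the paper, which treats that term only a.s.\ and extracts subsequences.
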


 \begin{proof}
 In order to simplify the notations, in this proof we drop the tildes and write $N_{k}=k$. Let $M\geq1$ and write 
  \begin{equation*}
 \T_k (u_{k}) - \T (u)= \big(   \T_k (u_{k}) -   \T (u_{k})\big) +\big(  \T (u_{k}) -  \T_M (u_{k}) \big)+\big(   \T_M (u_{k})-    \T_M (u)\big)+\big(     \T_M (u)- \T (u) \big).
 \end{equation*}
  To begin with, by continuity of the product in finite dimension, when $k\longrightarrow +\infty$
\begin{equation*}
  \T_M (  u_{k})  \longrightarrow     \T_M (  u) ,\quad \;\;\widetilde{\bf p}-\text{a.s. in}\;\; L^{2}\big([-T,T]; \H^{-\s}(\R^2)\big).
\end{equation*}
 We now deal with the other terms. It is sufficient to show  the convergence in the space $X:=L^{2}\big(\Omega \times[-T,T]; \H^{-\s}(\R^2)\big)$, since  the almost sure convergence  follows after exaction of a sub-sequence. \\
 By definition and the  invariance of $\mu$  we obtain
  \begin{eqnarray*}
\big\| \,    \T_M( u_k)-\T(u_k) \,\big\|^{2}_{X}&=&\int_{\mathcal{C}([-T,T];X^{-1})}\big\| \,  \T_{M}(v)-\T(v)\,\big\|^{2}_{L^{2}_{T}\H^{-\s}_{x}} \text{d}\nu_{k}(v)\\
&=&\int_{X^{-1}(\R^2)}\Big\| \,  \T_M\big(\Phi_{k}(t)(f)\big)-\T\big(\Phi_{k}(t)(f)\big)    \,\Big\|^{2}_{L^{2}_{T}\H^{-\s}_{x}} \text{d}\mu(f)\\
&=&\int_{X^{-1}(\R^2)}\big\| \,  \T_{M}(f)-\T(f)\big)\big\|^{2}_{L^{2}_{T}\H^{-\s}_{x}} \text{d}\mu(f)\\
&=&2T\int_{X^{-1}(\R^2)}\big\| \,  \T_{M}(f)-\T(f)\,\big\|^{2}_{\H^{-\s}_{x}} \text{d}\mu(f),
\end{eqnarray*}
which  tends to 0 uniformly in $k\geq 1$ when $M\longrightarrow +\infty$, according to Proposition \ref{Prop.cauchy}.

The term $\big\| \,   \T_M(u)-\T(u)\,\big\|_{X}$ is treated similarly. Finally, with the same argument we show
 \begin{equation*}
 \big\| \,  \T_k(u_k)-\T(u_k)\,\big\|_{X} \leq C  \big\| \,  \T_k(f)-\T(f)\,\big\|_{L^{2}_{\mu}\H^{-\s}_{x}},
 \end{equation*}
which tends to 0 when $k\longrightarrow +\infty$. This completes the proof.
  \end{proof}

\subsection{\bf Conclusion of the proof of Theorem \ref{thm2}} 
Define $\wt{f}=\wt{u}(0):=R_0(\tilde u)$. Then by \eqref{LOI}, ${\mathscr{L}_{X^{-1}}( \,\wt{f}\,) =\mu}$ and by the previous arguments, there exists $\wt{\Omega'}\subset \wt{\Omega}$ such that $\wt{\p}(\wt{\Omega'})=1$ and for each $\om'\in \wt{\Omega'}$, the random variable $\widetilde{u}$ satisfies  the equation
  \begin{equation}\label{dem} 
%\left\{
\begin{aligned}
&\widetilde{u}=  \wt{f} -i\int_0^t \T(  \widetilde{u}) dt, \quad   (t,x)\in \R\times \R^2.
%&\widetilde{u}(0,x)=  \widetilde{f}(x) \in X^{-1}(\R^2).
\end{aligned}
%\right.
\end{equation}

Set $\Sigma=\wt{f}(\Omega')$, then $\mu(\Sigma)=\wt{\p}(\wt{\Omega'})=1$. 
It remains to check that we can construct a global dynamics. Take a sequence $T_{N}\to +\infty$, and perform the previous argument for $T=T_{N}$. For all $N\geq 1$, let  $\Sigma_{N}$ be the corresponding set of initial conditions and  set $\Sigma=\cap_{N\in \N}\Sigma_{N}$. Then $\mu(\Sigma)=1$ and  for all  $\wt f\in \Sigma$, there exists 
 $$\wt u\in \mathcal{C}\big(\R\,; X^{-1}(\R^{2})\big),$$
which solves \eqref{dem}. This completes the proof of Theorem \ref{thm2}.

  \end{document}